\def\l@subsection{\@tocline{2}{0pt}{4pc}{5pc}{}}
\def\bysame{$\underline{\hskip.5truein}$}
\tikzset{
  arrow/.pic={\path[tips,every arrow/.try,->,>=#1] (0,0) -- +(.1pt,0);},
  pics/arrow/.default={triangle 90}
}
\definecolor{light-gray}{gray}{0.92}
\definecolor{ultra-light-gray}{gray}{0.97}
\newtheorem{theorem}{Theorem}[section]
\newtheorem{lemma}[theorem]{Lemma}
\newtheorem{corollary}[theorem]{Corollary}
\newtheorem{proposition}[theorem]{Proposition}
\theoremstyle{definition}
\newtheorem{definition}[theorem]{Definition} 
\newtheorem{conjecture}[theorem]{Conjecture} 
\newtheorem{remark}[theorem]{Remark}
\newtheorem{example}[theorem]{Example}
\title[Order-detection of Slopes]{Order-detection of slopes on the boundaries of knot manifolds\footnotetext{2020 Mathematics Subject
Classification. Primary 06F15, 57M05, 57M99}}
\author[Steven Boyer]{Steven Boyer}
\thanks{Steven Boyer was partially supported by an NSERC grants RGPIN-9446-2013 and RGPIN-2018-06549}
\address{D\'epartement de Math\'ematiques, Universit\'e du Qu\'ebec \`a Montr\'eal, 201 President Kennedy Avenue, Montr\'eal, Qc., Canada H2X 3Y7.}
\email{boyer.steven@uqam.ca}
\urladdr{http://www.cirget.uqam.ca/boyer/boyer.html}
\author{Adam Clay}
\thanks{Adam Clay was partially supported by an NSERC grants RGPIN-2014-05465 and RGPIN-05343-2020.}
\address{Department of Mathematics, 420 Machray Hall, University of Manitoba, Winnipeg, MB, R3T 2N2. } 
\email{Adam.Clay@umanitoba.ca}
\urladdr{http://server.math.umanitoba.ca/~claya/}
\begin{document}

\begin{abstract}
Motivated by the $L$-space conjecture, we investigate various notions of order-detection of slopes on knot manifolds. These notions are designed to characterise when rational homology $3$-spheres obtained by gluing compact manifolds along torus boundary components have left-orderable fundamental groups and when a Dehn filling of a knot manifold has a left-orderable fundamental group. Our developments parallel the results of \cite{HRRW} in the case of Heegaard-Floer slope detection and of \cite{BGH1} in the case of foliation slope detection, leading to several conjectured structure theorems that connect relative Heegaard-Floer homology and the boundary behaviour of co-oriented taut foliations with the set of left-orders supported by the fundamental group of a $3$-manifold. The dynamics of the actions of $3$-manifold groups on the real line plays a key role in our constructions and proofs. Our analysis leads to conjectured dynamical constraints on such actions in the case where the underlying manifold is Floer simple.

\end{abstract}

\maketitle
 
\begin{center}
\today 
\end{center}

\section{Introduction} 
\label{sec: intro} 

The background motivation for the work in this paper is the $L$-space conjecture, which contends that a closed, connected, orientable, irreducible $3$-manifold is not an $L$-space if and only if it admits a co-oriented taut foliation and if and only if it has a left-orderable fundamental group; see \cite[Conjecture 1]{BGW} and  \cite[Conjecture 5]{Juh}. The conjecture has been verified in a number of situations, including all graph manifolds (\cite{BC, HRRW}), but remains widely open. 

A relative form of the conjecture, introduced in \cite{BC}, was the key to analysing the graph manifold case. The ideas involved are best illustrated by considering {\it knot manifolds}, i.e. compact, connected, orientable, irreducible $3$-manifolds $M \ne S^1 \times D^2$ with torus boundary. Each type of structure on $M$ (left-orders on its fundamental group, co-orientable taut foliations, Heegaard Floer homology) determines boundary data (sets of {\it detected} slopes) that encode the information needed to understand the behaviour of the structure with respect to gluing another knot manifold or a solid torus to $M$.  Consequently, slope detection comes in three forms: order-detection, foliation-detection, and Heegaard Floer-detection. A fourth notion, representation-detection (referring to representations of the fundamental group of $M$ with values in $\mbox{Homeo}_+(\mathbb R)$), was used in \cite{BC} to pass between left-orders and foliations. 

The ideas introduced in \cite{BC} readily extend from the Seifert fibred setting to general $3$-manifolds in the case of detection by foliations and Heegaard Floer homology (\cite{BGH1, RR, HRW}), but doing this for left-orders and representations requires a little more care. Here we focus on order-detection, though remark that there is a parallel development of representation-detection through the correspondence between orders on $\pi_1(M)$ and actions of $\pi_1(M)$ on the reals. This dynamical interpretation of left-orders is an essential ingredient in the constructions leading to the proof of our main gluing theorem, Theorem \ref{thm: gluing via detection}. It also suggests strong dynamical constraints on the fundamental groups of Floer simple knot manifolds. Indeed, Conjecture \ref{conj: floer simple implies bdry-cofinal} below can be rephrased to say that if $M$ is a Floer simple knot manifold, then an action of $\pi_1(M)$ on the reals has a fixed point if and only if the restriction of the action to $\pi_1(\partial M)$ has a fixed point. See \S \ref{sec: bdry cof and dynamics}.  

The initial difficulty in defining order-detection for general knot manifolds is that the most immediate notion, and the one sufficient for the Seifert fibred case, is inadequate in general. Nevertheless, this notion, which we call {\it weak order-detection}, is essential in underpinning our development of the {\it regular} and {\it strong} forms.  

The sets of weakly order-detected slopes, (regularly) order-detected slopes, and strongly order-detected slopes on the boundary of a knot manifold $M$ are denoted, respectively, by $\mathcal{D}_{ord}^{wk}(M)$,  $\mathcal{D}_{ord}(M)$ and $\mathcal{D}_{ord}^{str}(M)$. We will see that  
$$\mathcal{D}_{ord}^{str}(M) \subseteq \mathcal{D}_{ord}(M) \subseteq \mathcal{D}_{ord}^{wk}(M) \subseteq \mathcal{S}(M),$$
where $\mathcal{S}(M) \cong S^1$ denotes the set of slopes on $\partial M$, and that  $\mathcal{D}_{ord}(M)$ and $\mathcal{D}_{ord}^{wk}(M)$ are closed subsets of $\mathcal{S}(M)$, though $\mathcal{D}_{ord}^{str}(M)$ is not closed in general.   

Weak order-detection has an intuitive foundation: every left-order on $\pi_1(M)$ restricts to the subgroup $\pi_1(\partial M)$ yielding a left-order of $\mathbb{Z} \oplus \mathbb{Z} \subset \mathbb{R}^2$, and every such order determines a line in the plane that divides the subgroup $\mathbb{Z} \oplus \mathbb{Z}$ into positive and negative halves.  The slope of this line is the boundary data carried by weak order-detection. 

It turns out that this data alone is inadequate for the purpose of understanding when a manifold obtained by gluing together $3$-manifolds along incompressible boundary  tori has a left-orderable fundamental group; the necessary and sufficient conditions developed by Bludov and Glass for left-ordering an amalgam require that the amalgamating isomorphism satisfy a certain conjugacy invariance with respect to families of left-orders on each of the factors \cite[Theorem A]{BG}. We address this by defining (regular) order-detection as a strengthened form of weak order-detection which incorporates conjugacy invariance of the boundary data. Though weak order-detection  is \emph{a priori} very different from order-detection, our investigation suggests that they may be equivalent in the following sense:

\begin{conjecture}
 \label{conj: wk = reg intro} 
If $M$ is a knot manifold, then $\mathcal{D}_{ord}(M) = \mathcal{D}_{ord}^{wk}(M)$.
\end{conjecture}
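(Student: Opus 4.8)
The containment $\mathcal{D}_{ord}(M)\subseteq\mathcal{D}_{ord}^{wk}(M)$ is already recorded above, so the entire content of the conjecture is the reverse inclusion $\mathcal{D}_{ord}^{wk}(M)\subseteq\mathcal{D}_{ord}(M)$: every weakly order-detected slope should in fact be \emph{regularly} order-detected. My plan is to fix $\alpha\in\mathcal{D}_{ord}^{wk}(M)$, take a left-order $<$ on $\pi_1(M)$ whose restriction to $\pi_1(\partial M)\cong\mathbb{Z}\oplus\mathbb{Z}$ has slope $\alpha$, and manufacture from it a conjugation-invariant family of left-orders all of whose boundary restrictions have slope $\alpha$, which is exactly what regular detection demands.

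First I would pass to the dynamical side via the dictionary between left-orders on the countable group $\pi_1(M)$ and its actions on $\mathbb{R}$ by orientation-preserving homeomorphisms, realizing $<$ as an action $\rho\colon\pi_1(M)\to\mathrm{Homeo}_+(\mathbb{R})$. The boundary slope is then encoded dynamically: the restriction $\rho|_{\pi_1(\partial M)}$ is an action of $\mathbb{Z}\oplus\mathbb{Z}$ on the line, dividing $\mathbb{Z}\oplus\mathbb{Z}$ into positive and negative cones along a line whose direction is $\alpha$; when this action is cofinal that line is the kernel of the Poincar\'e translation-number homomorphism $\tau\colon\pi_1(\partial M)\to\mathbb{R}$. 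The advantage of this reformulation is that the conjugacy invariance imposed by the Bludov--Glass gluing criterion becomes a statement about the $\pi_1(M)$-orbit of $\rho$ inside the space of actions, and the latter sits inside the \emph{compact} space $\mathrm{LO}(\pi_1(M))$ of left-orders.

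The heart of the argument is a promotion step that replaces $<$ by an order whose boundary data is conjugation-stable. The naive move --- passing to the conjugation orbit $\{g\cdot<\,:\,g\in\pi_1(M)\}$ and its closure --- produces a conjugation-invariant compact set of orders, but a conjugate $g\cdot<$ restricts to $\pi_1(\partial M)$ through the \emph{different} peripheral subgroup $g^{-1}\pi_1(\partial M)g$, so its boundary slope is a priori unrelated to $\alpha$. What one really needs is an order for which $\rho$ restricts to every conjugate peripheral subgroup with translation direction corresponding to $\alpha$; from such an order a genuinely invariant subfamily of $\overline{\{g\cdot<\}}$ lying over $\alpha$ can be extracted and translated back through the order--action dictionary to place $\alpha$ in $\mathcal{D}_{ord}(M)$. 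When $<$ is \emph{boundary-cofinal}, so that $\pi_1(\partial M)$ acts on $\mathbb{R}$ with no global fixed point and $\tau\neq 0$, the cofinal direction is a robust, continuously varying invariant and cofinality is an open condition; this is precisely the rigidity that lets one build the required conjugation-stable order.

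The main obstacle is the non-cofinal case. A weakly detected slope may be witnessed only by orders for which $\rho|_{\pi_1(\partial M)}$ has a global fixed point, and then the fixed-point combinatorics can jump along a conjugacy orbit, so that the slope --- rather than merely the finer order data --- need not survive in the limit. This is exactly the phenomenon that Conjecture \ref{conj: floer simple implies bdry-cofinal} is designed to exclude: for Floer simple $M$ it asserts that an action of $\pi_1(M)$ whose boundary restriction is fixed-point-free is itself fixed-point-free, so that non-cofinal boundary behaviour never arises beyond what $\pi_1(\partial M)$ already forces. I therefore expect the proof to require either this dynamical input or a substitute, together with the fallback of showing that strongly order-detected slopes are dense in $\mathcal{D}_{ord}^{wk}(M)$: since $\mathcal{D}_{ord}^{str}(M)\subseteq\mathcal{D}_{ord}(M)$ and $\mathcal{D}_{ord}(M)$ is closed, such a density statement would give $\mathcal{D}_{ord}^{wk}(M)=\overline{\mathcal{D}_{ord}^{str}(M)}\subseteq\mathcal{D}_{ord}(M)$ and close the argument. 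Establishing that density --- equivalently, approximating every non-cofinal weak order by cofinal ones with nearby slope --- is, to my mind, where the real difficulty lies.
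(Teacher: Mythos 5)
Your proposal does not prove the statement, but it is important to situate this correctly: the statement is Conjecture \ref{conj: wk = reg intro}, which the paper itself leaves open, so no complete proof exists to compare against. What your sketch does capture is exactly the mechanism behind the paper's partial result, Theorem \ref{thm: some not weak means weak detd = detd}: if $\mathfrak{o}$ is boundary-cofinal and weakly detects $[\alpha]$, then $[\alpha]$ is (regularly) order-detected. The paper proves this invariance (Proposition \ref{prop: bdry cofinal implies invt}, Lemma \ref{lemma: variance under action}, Proposition \ref{prop: cofinal 2}) not via translation numbers but by noting that every element of $\pi_1(\partial M)$ off the line $L(\mathfrak{o})$ is $\mathfrak{o}$-cofinal, that $\mathfrak{o}$-cofinality of an element is equivalent to fixed-point-freeness of the corresponding homeomorphism in a dynamic realisation (Lemma \ref{lemma: bdry-cofinal implies fpf}), and that fixed-point-freeness and $\mathfrak{o}$-sign are preserved under conjugation; hence $[L(\mathfrak{o}; g\pi_1(\partial M)g^{-1})]$ is independent of $g$. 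Up to this point your route and the paper's coincide.

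The genuine gap is the non-cofinal case, which you acknowledge, but you frame it slope-by-slope, and here you miss the paper's key structural input showing that this pathology is all-or-nothing. Theorem \ref{thm: some not weak implies all cofinal}, proved via Lemma \ref{cofinal versus convex} and Proposition \ref{prop: not bdry-cofinal implies all slopes}, shows that if even \emph{one} order on $\pi_1(M)$ fails to be boundary-cofinal, then after passing to a limit order in $\overline{\mathcal{O}(\mathfrak{o})}$ the subgroup $\pi_1(\partial M)$ lies in a proper convex subgroup $C$, and Dehn fillings of the infinite cover corresponding to $C$ then produce left-orders weakly detecting a dense (hence, by compactness of $LO(M)$, every) slope; so $\mathcal{D}_{ord}^{wk}(M)=\mathcal{S}(M)$. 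Thus either every order is boundary-cofinal, in which case your cofinal mechanism already gives $\mathcal{D}_{ord}(M)=\mathcal{D}_{ord}^{wk}(M)$ (this is Theorem \ref{thm: some not weak means weak detd = detd}), or the conjecture reduces to the single global statement that $\mathcal{D}_{ord}^{wk}(M)=\mathcal{S}(M)$ implies $\mathcal{D}_{ord}(M)=\mathcal{S}(M)$. Your fallback via density of strongly detected slopes is essentially how the paper handles a sub-case of this (Proposition \ref{prop: most str detected}(2)), but that argument requires the proper convex subgroup containing $\pi_1(\partial M)$ to be \emph{normal}: normality is what yields $C\cap g\pi_1(\partial M)g^{-1}=\langle g\alpha g^{-1}\rangle$ and hence the conjugation-invariance that regular detection demands. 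Lemma \ref{cofinal versus convex}(2) supplies convexity but not normality, and bridging that gap is precisely where the conjecture remains open; neither the compactness/density argument you sketch nor an appeal to Conjecture \ref{conj: floer simple implies bdry-cofinal} (itself open) closes it.
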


We are able to verify this conjecture in a special case. 

\begin{theorem} 
\label{thm: some not weak means weak detd = detd}
If $\mathcal{D}_{ord}^{wk}(M) \ne  \mathcal{S}(M)$, then $\mathcal{D}_{ord}(M) = \mathcal{D}_{ord}^{wk}(M)$. 
\end{theorem}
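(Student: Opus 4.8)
The plan is to establish the inclusion $\mathcal{D}_{ord}^{wk}(M) \subseteq \mathcal{D}_{ord}(M)$, since the reverse containment $\mathcal{D}_{ord}(M) \subseteq \mathcal{D}_{ord}^{wk}(M)$ is one of the displayed inclusions and holds unconditionally. So fix a slope $\gamma \in \mathcal{D}_{ord}^{wk}(M)$ and a left-order $\mathfrak{o}$ on $\pi_1(M)$ that weakly detects it. Writing $H = \pi_1(\partial M) \cong \mathbb{Z} \oplus \mathbb{Z}$, weak detection means that the restriction $\mathfrak{o}|_H$ is a left-order of $H$ whose dividing line has slope $\gamma$; passing to the dynamic realization $\rho \colon \pi_1(M) \to \mathrm{Homeo}_+(\mathbb{R})$ of $\mathfrak{o}$, this is the statement that the translation-number homomorphism of the restricted action $\rho|_H$ vanishes exactly along the direction of slope $\gamma$. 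The goal is to upgrade $\mathfrak{o}$ to an order detecting $\gamma$ in the stronger, conjugacy-invariant sense demanded by the Bludov--Glass criterion \cite{BG}.

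I would first extract the essential consequence of the hypothesis. Since $\mathcal{D}_{ord}^{wk}(M)$ is closed and proper in $\mathcal{S}(M) \cong S^1$, its complement is a nonempty open arc, and so contains a rational slope $\alpha$; let $a \in H$ be a primitive class of slope $\alpha$. The key observation is that $\alpha \notin \mathcal{D}_{ord}^{wk}(M)$ is equivalent to the assertion that $a$ is cofinal in the restriction to $H$ of \emph{every} left-order on $\pi_1(M)$: an order fails to detect $\alpha$ precisely when its dividing line avoids the $a$-direction, which is exactly the condition that $a$ lie outside every proper convex subgroup of $(H, \mathfrak{o}'|_H)$. Consequently all peripheral slopes arising from restrictions of orders are confined to the arc $\mathcal{S}(M) \setminus \{\alpha\}$, which is a genuine interval carrying a linear (rather than merely cyclic) order. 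This uniform cofinality of $a$, together with the resulting linear structure on the achievable slopes, is the leverage supplied by the hypothesis.

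With this in hand, the remaining task is to build, from $\mathfrak{o}$, a conjugation-invariant family of peripheral orders of slope $\gamma$ --- the data certifying $\gamma \in \mathcal{D}_{ord}(M)$. Here I would exploit the cofinality of $a$: in the dynamic realization of the peripheral restriction it acts as a fixed-point-free, canonically oriented generator transverse to the $\gamma$-direction, so that the slope $\gamma$ is recorded by a rotation-type invariant on the quotient circle $\mathbb{R}/\langle \rho(a) \rangle$. Because this orientation is forced uniformly over all left-orders, I would use it to comb the conjugate restrictions $\mathfrak{o}^g|_{gHg^{-1}}$ into a coherent, conjugation-invariant assignment of positive cones without disturbing the detected slope, and then verify that the resulting invariant peripheral data genuinely extends to a left-order of $\pi_1(M)$. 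The main obstacle, which I expect to absorb most of the work, is exactly this coherence step: matching the dynamical symmetrization to the precise conjugacy-invariance condition of \cite{BG} simultaneously across all conjugates of $H$, and confirming that the amalgamated boundary data extends to, rather than merely restricts from, an order on the whole group. The closedness of both detection sets should allow one to first treat a dense family of slopes and then pass to the limit.
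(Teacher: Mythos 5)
Your reduction to proving $\mathcal{D}_{ord}^{wk}(M) \subseteq \mathcal{D}_{ord}(M)$ is correct, and your instinct to use a non-detected rational slope $\alpha$ and fixed-point-free dynamics is pointed in the right direction, but there is a genuine gap at the heart of the argument. From the hypothesis you extract only that $a$ is cofinal in $H = \pi_1(\partial M)$ with respect to $\mathfrak{o}'|_H$ for every left-order $\mathfrak{o}'$. That statement is a triviality about orders on $\mathbb{Z}^2$ (every element off the dividing line of an order on $\mathbb{Z}^2$ is cofinal \emph{in} $\mathbb{Z}^2$), it holds for every knot manifold and every non-detected slope, and it carries no information about how $H$ sits inside $\pi_1(M)$. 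What your argument actually needs --- in particular, to justify the claim that $\rho(a)$ acts fixed-point freely in the dynamic realisation of $\mathfrak{o}$ --- is that $a$ is $\mathfrak{o}$-cofinal in the \emph{whole group} $\pi_1(M)$, i.e.\ that $\mathfrak{o}$ is boundary-cofinal (cf.\ Lemma \ref{lemma: bdry-cofinal implies fpf}(1): fixed-point-freeness is equivalent to cofinality in $\pi_1(M)$, not in $H$). The bridge from the hypothesis to this global cofinality is exactly Theorem \ref{thm: some not weak implies all cofinal}, and it is the hard part of the paper's proof: it goes by contraposition through Proposition \ref{prop: not bdry-cofinal implies all slopes}, a genuinely topological statement. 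If some order fails to be boundary-cofinal, then (via stabilisers of ideal points of the dynamic realisation, Lemma \ref{cofinal versus convex}(2)) one finds an order for which $\pi_1(\partial M)$ lies in a proper convex subgroup $C$; one then passes to the cover with fundamental group $C$, Dehn fills it, and uses Gordon--Luecke on reducible fillings together with Burns--Hale local indicability and a Howie--Short argument to manufacture orders weakly detecting a dense set of slopes, forcing $\mathcal{D}_{ord}^{wk}(M) = \mathcal{S}(M)$. Nothing in your sketch supplies or substitutes for this input, and no argument confined to the peripheral subgroup can, since the hypothesis must be leveraged against the global topology of $M$.

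The endgame of your plan is also misdirected in a way worth flagging: order-detection of $[\gamma]$ does not ask you to symmetrise peripheral data over conjugates and then extend it to an order via the Bludov--Glass criterion (that machinery belongs to the gluing theorem, Theorem \ref{thm: gluing via detection}). It asks for a \emph{single} order $\mathfrak{o}$ such that every conjugate $g \cdot \mathfrak{o}$ weakly detects $[\gamma]$. Once boundary-cofinality of all orders is in hand, the original weakly detecting order $\mathfrak{o}$ itself works, with no combing, no rotation-number invariant, and no density-and-limit step: each $\gamma' \in \pi_1(\partial M) \setminus L(\mathfrak{o})$ is $\mathfrak{o}$-cofinal in $\pi_1(M)$, hence $\rho_\mathfrak{o}(\gamma')$ is fixed-point free, hence so is its conjugate $\rho_\mathfrak{o}(g\gamma' g^{-1})$, hence $g\gamma' g^{-1}$ is again $\mathfrak{o}$-cofinal and of the same sign; this forces $[L(\mathfrak{o}; g\pi_1(\partial M)g^{-1})] = [L(\mathfrak{o}; \pi_1(\partial M))]$ for all $g$ (Proposition \ref{prop: bdry cofinal implies invt}, Corollary \ref{cor: invariance under action}, Proposition \ref{prop: cofinal 2}). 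So the correct skeleton is: hypothesis $\Rightarrow$ boundary-cofinality of every order (the missing topological step) $\Rightarrow$ conjugation-invariance of the weakly detected slope, rather than any construction of new orders from amalgamated boundary data.
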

Thus Conjecture \ref{conj: wk = reg intro}  is reduced to proving that if $\mathcal{D}_{ord}^{wk}(M) = \mathcal{S}(M)$, then $\mathcal{D}_{ord}(M) = \mathcal{S}(M)$.

From here, we investigate the behaviour of order-detection with respect to gluing and prove the following result, where it should be noted that we do not require that the identified slopes be rational. 

\begin{theorem}
\label{thm: gluing via detection}
Suppose that $M_1$ and $M_2$ are knot manifolds and $W = M_1 \cup_f M_2$, where $f: \partial M_1 \xrightarrow{\; \cong \;} \partial M_2$ identifies slopes $[\alpha_1] \in \mathcal{D}_{ord}(M_1)$ on $\partial M_1$ and $[\alpha_2] \in \mathcal{D}_{ord}(M_2)$ on $\partial M_2$. Then $\pi_1(W)$ is left-orderable. 
\end{theorem}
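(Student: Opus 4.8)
The plan is to recognise $\pi_1(W)$ as an amalgamated free product and to verify the hypotheses of the Bludov--Glass orderability criterion \cite[Theorem A]{BG}, feeding in the boundary data supplied by regular order-detection. First I would record that a detected slope forces each $\pi_1(M_i)$ to carry a left-order, so both factors are left-orderable. Since $M_1$ and $M_2$ are irreducible and distinct from $S^1 \times D^2$, their boundary tori are incompressible, so each $\pi_1(\partial M_i) \cong \mathbb{Z} \oplus \mathbb{Z}$ injects into $\pi_1(M_i)$. Writing $T$ for the glued-up torus and $C = \pi_1(T)$, van Kampen's theorem then gives
\[
\pi_1(W) \;\cong\; \pi_1(M_1) *_{C} \pi_1(M_2),
\]
the two copies of $C$ being identified through $f_*\colon \pi_1(\partial M_1) \xrightarrow{\;\cong\;} \pi_1(\partial M_2)$. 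Left-orderability of $\pi_1(W)$ is thus the orderability of this amalgam, and by \cite[Theorem A]{BG} it suffices to produce a family of left-orders on $\pi_1(M_1)$ and one on $\pi_1(M_2)$, each invariant under conjugation by its own group, whose restrictions induce the \emph{same} family of left-orders on the common subgroup $C$.

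Next I would unpack what $[\alpha_i] \in \mathcal{D}_{ord}(M_i)$ provides. This is exactly the feature built into the definition of $\mathcal{D}_{ord}$, as opposed to $\mathcal{D}_{ord}^{wk}$: the detection of $[\alpha_i]$ comes packaged with a conjugation-invariant family $\mathcal{O}_i$ of left-orders on $\pi_1(M_i)$, each restricting to a left-order of $\pi_1(\partial M_i) \cong \mathbb{Z}\oplus\mathbb{Z}$ whose dividing line has slope $[\alpha_i]$. The conjugation-invariance of $\mathcal{O}_i$ is precisely the ``conjugacy invariance of the boundary data'' demanded by the Bludov--Glass hypothesis. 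This is why weak detection, which asserts only that a single order realises the slope, is inadequate, while regular detection is tailored to the criterion.

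The matching step is where the identification $f_*[\alpha_1] = [\alpha_2]$ enters. Restricting $\mathcal{O}_1$ and $\mathcal{O}_2$ to the peripheral subgroups produces two conjugation-closed collections $\mathcal{R}_1, \mathcal{R}_2$ of left-orders on $\mathbb{Z}\oplus\mathbb{Z}$ whose dividing lines have slopes $[\alpha_1]$ and $[\alpha_2]$. The crux is to argue that, once conjugation-closure is imposed, the set $\mathcal{R}_i$ of induced boundary orders is governed by the line $[\alpha_i]$ alone (the closure forces in both choices of positive half-plane, and, for a rational slope, both directions along the line). Since $f_*$ carries the line of slope $[\alpha_1]$ to the line of slope $[\alpha_2]$, it then carries $\mathcal{R}_1$ onto $\mathcal{R}_2$. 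At that point $\mathcal{O}_1$ and $\mathcal{O}_2$ are conjugation-invariant families inducing the same family of orders on $C$, which is exactly the Bludov--Glass data, and left-orderability of $\pi_1(W)$ follows.

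I expect the heart of the matter, and the main obstacle, to be this matching step rather than the formal invocation of van Kampen or the citation of \cite{BG}. One must show that the boundary orders produced by regular detection depend only on the slope together with the required conjugation-invariance, so that the two sides align under $f_*$, and that this holds uniformly whether or not $[\alpha_i]$ is rational — the point flagged in the statement, where for irrational slopes there is no underlying curve and detection is phrased purely through dividing lines while the amalgam structure over $\mathbb{Z}\oplus\mathbb{Z}$ is unchanged. The dynamical realisation of left-orders as actions of $\pi_1(M_i)$ on $\mathbb{R}$ is the natural device for this bookkeeping: it tracks how peripheral elements move points, lets one compare the boundary dynamics coming from $M_1$ and $M_2$, and realises the needed sidedness choices, thereby reducing orderability of the amalgam to the gluing of two compatible line actions along their shared $C$-action.
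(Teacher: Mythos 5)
Your overall architecture is the same as the paper's: realise $\pi_1(W)$ as an amalgam of $\pi_1(M_1)$ and $\pi_1(M_2)$ over $\pi_1(T) \cong \mathbb{Z}\oplus\mathbb{Z}$, and feed normal (conjugation-invariant) families of left-orders on the two factors into the Bludov--Glass criterion, using $f_*[\alpha_1] = [\alpha_2]$ to match the induced boundary orders. The gap is in your matching step, and it is genuine. You assert that once conjugation-closure is imposed, the set $\mathcal{R}_i$ of induced boundary orders is governed by the line $[\alpha_i]$ alone, the closure forcing in both choices of positive half-plane and, for a rational slope, both directions along the line. This is false. Conjugation-invariance need not enlarge the set of boundary restrictions at all: if $\mathfrak{o}$ is a bi-order (e.g.\ $M$ the figure-eight knot exterior, cf.\ Example \ref{example: adam's eg}), then $g \cdot \mathfrak{o} = \mathfrak{o}$ for every $g$, so the orbit $\mathcal{O}(\mathfrak{o})$ is a singleton, it is a normal family of regular detectors of $[L(\mathfrak{o})]$, and its restriction to $\pi_1(\partial M)$ is a \emph{single} order on $\mathbb{Z}\oplus\mathbb{Z}$ --- not the two (irrational case) or four (rational case) orders realising the line. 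Since $\pi_1(\partial M)$ is abelian, conjugation simply cannot act on boundary restrictions in the way your argument requires.

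What actually closes this gap is the content of Lemma \ref{lemma: det +}, the technical heart of the paper's proof. The opposite half-plane orders come cheaply: since $(g \cdot \mathfrak{o})^{op} = g \cdot \mathfrak{o}^{op}$ and $\mathfrak{o}^{op}$ detects the same slope, one adjoins them by hand, and $\mathcal{O}(\mathfrak{o}) \cup \mathcal{O}(\mathfrak{o}^{op})$ is a normal family whose restrictions surject onto $LO(\partial M;[\alpha])$ when $[\alpha]$ is irrational, since that set has exactly two elements. But when $[\alpha]$ is rational, $LO(\partial M;[\alpha])$ has four elements, and the two $*$-orders (those reversing the direction along $L_0 = L(\mathfrak{o}) \cap \pi_1(\partial M)$) need not arise as restrictions of anything in $\mathcal{O}(\mathfrak{o}) \cup \mathcal{O}(\mathfrak{o}^{op})$; they must be manufactured as honest left-orders on $\pi_1(M)$. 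The paper does this via Proposition \ref{prop: wk ord-det means convex} (proved with dynamic realisations and ideal points): it yields $\hat{\mathfrak{o}} \in \overline{\mathcal{O}(\mathfrak{o})} \subset LO(M;[\alpha])$ together with an $\hat{\mathfrak{o}}$-convex subgroup $C$ satisfying $C \cap \pi_1(\partial M) = \langle \alpha \rangle$; a convex swap on $C$ then produces $\hat{\mathfrak{o}}'$ with $\hat{\mathfrak{o}}'|_{\pi_1(\partial M)} = (\hat{\mathfrak{o}}|_{\pi_1(\partial M)})^*$, and one must still verify --- by a case analysis of how $C$ meets each conjugate $g\pi_1(\partial M)g^{-1}$ --- that $\hat{\mathfrak{o}}'$ remains in $LO(M;[\alpha])$, i.e.\ is a regular and not merely weak detector. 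Your closing remark that dynamical realisations are the natural bookkeeping device points in the right direction, but the proposal as written treats as automatic precisely the step where all of this machinery is needed.
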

The utility of this theorem can be seen, for instance, in its use in the verification of the $L$-space conjecture for graph manifolds (\cite{BC}), in the proof that toroidal integer homology $3$-spheres have left-orderable fundamental groups (\cite{BGH1}), and in the proof that if a knot in the $3$-sphere admits a surgery with non-left-orderable fundamental group, then either the knot is cabled and the surgery is along the cabling slope or the $JSJ$ graph of the exterior of $K$ is an interval (\cite{BGH2}).  

Theorem \ref{thm: gluing via detection} leads to a characterisation of the order-detection of rational slopes that mirrors an analogous result for the Heegaard Floer notion of slope detection proved in \cite{RR, HRW} (i.e. $NLS$-detection for ``non-L-space" detection), and for the notion of foliation-detection investigated in \cite{BGH1}. Here, we use $N$ to denote the twisted $I$-bundle over the Klein bottle with rational longitude $\lambda_N$.  

\begin{theorem}  
\label{thm: topological def of order-detection} 
Suppose that $M$ is boundary-irreducible, $[\alpha]$ is a rational slope on $\partial M$, and $f: \partial N \to \partial M$ is a homeomorphism which identifies $[\lambda_N]$ with $[\alpha]$. Then $[\alpha]$ is order-detected if and only if $\pi_1(M \cup_f N)$ is left-orderable. 
\end{theorem}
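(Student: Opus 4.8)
The plan is to prove both implications, exploiting the decomposition $\pi_1(W) = \pi_1(M) *_{\pi_1(T)} \pi_1(N)$, where $T = \partial M = \partial N$ is the gluing torus. Since $M$ is boundary-irreducible and $N$ (being an $I$-bundle over a closed surface) is as well, the maps $\pi_1(T) = \mathbb{Z} \oplus \mathbb{Z} \to \pi_1(M)$ and $\pi_1(T) \to \pi_1(N)$ are injective, so the amalgamated product description is legitimate and we may regard $\pi_1(M)$, $\pi_1(N)$, and $\pi_1(T)$ as subgroups of $\pi_1(W)$. Throughout I use the presentation $\pi_1(N) = \langle a, b \mid a b a^{-1} = b^{-1}\rangle$, under which $\pi_1(T) = \langle a^2, b\rangle$ and $[\lambda_N] = [b]$, since $b$ is the rationally null-homologous class on $\partial N$.

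For the forward implication I first record that $[\lambda_N] \in \mathcal{D}_{ord}(N)$. The key feature is that $a$ conjugates $b$ to $b^{-1}$, so every conjugate of $b$ is $b^{\pm 1}$; from this I would show that in any left-order on $\pi_1(N)$ the cyclic group $\langle b \rangle$ is convex in $\pi_1(T)$, equivalently that $b = \lambda_N$ is non-cofinal and lies on the dividing line of the restricted order. Consequently every left-order on $\pi_1(N)$ detects $\lambda_N$, and since the set of all left-orders is trivially invariant under conjugation by $\pi_1(\partial N)$, this detection is regular, giving $[\lambda_N] \in \mathcal{D}_{ord}(N)$. With this in hand, if $[\alpha] \in \mathcal{D}_{ord}(M)$, then Theorem \ref{thm: gluing via detection} applied to $M_1 = M$, $M_2 = N$ with the identification $f$ of $[\alpha]$ with $[\lambda_N]$ immediately yields that $\pi_1(W)$ is left-orderable.

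For the reverse implication, suppose $\pi_1(W)$ carries a left-order $\mathfrak{o}$, and let $\rho \colon \pi_1(W) \to \mathrm{Homeo}_+(\mathbb{R})$ be a dynamic realization. The crucial input is a dynamical rigidity statement for $N$: because $\rho(a)\rho(b)\rho(a)^{-1} = \rho(b)^{-1}$, the homeomorphism $\rho(b)$ is conjugate to its own inverse by an orientation-preserving map, which forces $\rho(b)$ to have a fixed point (a fixed-point-free $\rho(b)$ translating in one direction cannot be conjugate to $\rho(b)^{-1}$, which translates in the other). Hence $\rho(\lambda_N)$, and therefore $\rho(\alpha)$, fixes a point. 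I would then restrict $\mathfrak{o}$ to $\pi_1(M)$ to obtain a left-order $\mathfrak{o}_M$ and argue that the boundedness of the $\alpha$-orbit of the basepoint forces $\alpha$ to be non-cofinal in $\mathfrak{o}_M|_{\pi_1(T)}$, so that the dividing line of this restricted order is exactly the $\alpha$-line; this is precisely weak order-detection of $[\alpha]$ by $\mathfrak{o}_M$. Finally, to promote this to regular detection I would feed the left-orderability of the amalgam into the Bludov--Glass criterion \cite{BG}: left-orderability of $\pi_1(M) *_{\pi_1(T)} \pi_1(N)$ supplies a $\pi_1(T)$-conjugacy-invariant family of compatible orders on the two factors, and the rigidity above pins the $N$-side family to one detecting $\lambda_N$, forcing the corresponding $M$-side family to be an $\alpha$-detecting, conjugacy-invariant family, i.e. $[\alpha] \in \mathcal{D}_{ord}(M)$.

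The routine forward direction aside, the main obstacle is this last step of the reverse implication: upgrading the easily obtained weak detection to genuine regular detection while matching the conjugacy-invariance built into $\mathcal{D}_{ord}$ with the compatibility data produced by Bludov--Glass. A secondary technical point requiring care is the passage between the order-theoretic notion of cofinality on the subgroup $\pi_1(T)$ and the dynamical fixed-point behaviour of $\rho$, since cofinality in $\pi_1(W)$ does not transparently restrict to cofinality in $\pi_1(T)$; controlling this is where the special role of the rational longitude, and of the normalising element $a$ acting on $\pi_1(T)$ by $b \mapsto b^{-1}$ and $a^2 \mapsto a^2$, is indispensable.
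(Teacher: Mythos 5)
Your proposal follows the paper's own route in both directions: the forward implication rests on the fact that $\pi_1(N)=\langle x,y \mid xyx^{-1}=y^{-1}\rangle$ carries only the four lexicographic left-orders coming from $1 \to \langle y\rangle \to \pi_1(N) \to \langle x\rangle \to 1$, in all of which $\langle y\rangle = \langle \lambda_N\rangle$ is convex, so that every left-order on $\pi_1(N)$ order-detects $[\lambda_N]$, after which Theorem \ref{thm: gluing via detection} applies; the reverse implication extracts compatible normal families from the ``only if'' half of the Bludov--Glass theorem and uses the rigidity of $LO(N)$ to force the $M$-side family to detect $[\alpha]$. This is exactly the paper's proof. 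However, two steps need repair.

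First, at what you yourself flag as the crux, you quote Bludov--Glass as supplying a ``$\pi_1(T)$-conjugacy-invariant family of compatible orders.'' That is not enough: membership in $\mathcal{D}_{ord}(M)$ requires an order $\mathfrak{o}$ with $[L(g\cdot\mathfrak{o};\pi_1(\partial M))]=[\alpha]$ for \emph{all} $g\in\pi_1(M)$, and $\pi_1(T)$-invariance gives no control over $g\cdot\mathfrak{o}$ when $g\notin\pi_1(T)$. What the theorem of \cite{BG} actually supplies (and what the paper quotes) is a pair of \emph{normal} families $\mathcal{N}_1\subset LO(M)$, $\mathcal{N}_2\subset LO(N)$, each invariant under conjugation by the \emph{full} factor group; with that statement, compatibility plus the fact that every order in $\mathcal{N}_2$ detects $[\lambda_N]$ forces $[L(g\cdot\mathfrak{o})]=[\alpha]$ for every $\mathfrak{o}\in\mathcal{N}_1$ and $g\in\pi_1(M)$, and your argument closes. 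Second, your dynamical detour in the reverse direction conflates cofinality in the ambient group with cofinality in a subgroup: a fixed point of $\rho(\alpha)$ shows only that $\alpha$ is not $\mathfrak{o}$-cofinal in $\pi_1(W)$ (cf. Lemma \ref{lemma: bdry-cofinal implies fpf}(1)), which does not imply that $\alpha$ lies on the line of $\mathfrak{o}|_{\pi_1(T)}$ --- an element can be bounded in the big group yet cofinal in a subgroup, for instance when $\pi_1(T)$ sits inside a proper convex subgroup. The same group-versus-subgroup slippage occurs in your forward-direction sketch, where ``every conjugate of $b$ is $b^{\pm1}$'' directly yields non-cofinality of $b$ in $\pi_1(N)$ but not convexity of $\langle b\rangle$ in $\pi_1(T)$. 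Both issues are repaired by invoking the classification of the four left-orders on the Klein bottle group (\cite[Example 2.17]{CR}), and once Bludov--Glass is quoted with full normality the dynamical preamble can simply be deleted, since that step subsumes it.
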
 

See \S \ref{subsec: general gluings} for generalisations of Theorems \ref{thm: gluing via detection} and \ref{thm: topological def of order-detection} to more general manifolds and gluings, e.g. manifolds with multiple torus boundary components in the first case and manifolds other than the twisted $I$-bundle over the Klein bottle in the second. 

It is conjectured (\cite[Conjecture 2.13 and Remark 7.5]{BGH1}) that the set of order-detected slopes on the boundary of a knot manifold coincides with both the set of foliation-detected slopes and the set of $NLS$-detected slopes. So, proceeding in analogy with the structure theorems on $NLS$-detection found in \cite{RR, HRW}, we expect that 
$\mathcal{D}_{ord}(M)$ is a connected subset of $\mathcal{S}(M)$ whose end-points are rational slopes encoded by the Turaev torsion of $M$. Further, we expect the converse to Theorem \ref{thm: gluing via detection} holds: 

\begin{conjecture}
 \label{conj: order gluing intro} 
Suppose that $M_1$ and $M_2$ are knot manifolds and $W = M_1 \cup_f M_2$, where $f: \partial M_1 \xrightarrow{\; \cong \;} \partial M_2$. If $\pi_1(W)$ is left-orderable, then $f$ identifies slopes $[\alpha_1] \in \mathcal{D}_{ord}(M_1)$ on $\partial M_1$ and $[\alpha_2] \in \mathcal{D}_{ord}(M_2)$ on $\partial M_2$.
\end{conjecture}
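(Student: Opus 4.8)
The plan is to recognise $\pi_1(W)$ as the amalgamated product $\pi_1(M_1) *_{\mathbb{Z} \oplus \mathbb{Z}} \pi_1(M_2)$, where the amalgamating subgroup is $\pi_1(T)$ for the gluing torus $T = f(\partial M_1) = \partial M_2$. Incompressibility of $T$ in $W$, which holds because each $M_i$, being an irreducible knot manifold other than a solid torus, has incompressible boundary, guarantees that $\pi_1(\partial M_i) \hookrightarrow \pi_1(M_i)$ and that the van Kampen decomposition takes the stated form. A left-order $<$ on $\pi_1(W)$ then restricts to left-orders $<_1, <_2$ on $\pi_1(M_1), \pi_1(M_2)$ whose further restrictions to $\pi_1(T)$ coincide. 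Every left-order on $\mathbb{Z} \oplus \mathbb{Z}$ has a well-defined slope in $\mathcal{S} \cong S^1$ (rational in the non-Archimedean case, irrational in the Archimedean case), so this common boundary order determines a single slope, which reads as $[\alpha_1]$ on $\partial M_1$ and as $[\alpha_2] = f([\alpha_1])$ on $\partial M_2$. By construction $<_i$ restricts to an order of slope $[\alpha_i]$ on $\pi_1(\partial M_i)$, so $[\alpha_i] \in \mathcal{D}_{ord}^{wk}(M_i)$ and $f$ identifies $[\alpha_1]$ with $[\alpha_2]$; thus the content of the conjecture is entirely the upgrade from weak to regular order-detection of these two slopes.

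Next I would try to produce the conjugacy-invariant boundary data demanded by regular order-detection directly from the global order on the amalgam, exploiting the correspondence between left-orders and actions on $\mathbb{R}$. Passing to the dynamical realisation $\rho\colon \pi_1(W) \to \mathrm{Homeo}_+(\mathbb{R})$ of $<$, I would restrict $\rho$ to $\pi_1(M_1)$ and consider the orbit of $<_1$ under conjugation by $\pi_1(M_1)$, which is by definition a $\pi_1(M_1)$-invariant family of left-orders on $\pi_1(M_1)$. The goal is to show that such a family, together with the common boundary slope, satisfies the defining conditions of regular order-detection, i.e. that it encodes exactly the conjugacy invariance of the boundary data required by the Bludov--Glass criterion for orderability of amalgams \cite[Theorem A]{BG}. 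In the favourable case $\mathcal{D}_{ord}^{wk}(M_i) \ne \mathcal{S}(M_i)$ this is immediate, since Theorem \ref{thm: some not weak means weak detd = detd} already gives $\mathcal{D}_{ord}(M_i) = \mathcal{D}_{ord}^{wk}(M_i)$ and the weak detection established above suffices.

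The main obstacle is precisely the remaining case, where $\mathcal{D}_{ord}^{wk}(M_i) = \mathcal{S}(M_i)$ for one of the factors, so that one cannot appeal to Theorem \ref{thm: some not weak means weak detd = detd} and where even Conjecture \ref{conj: wk = reg intro} is open. The difficulty is that conjugation in $\pi_1(M_i)$ does not preserve the peripheral subgroup $\pi_1(\partial M_i)$, so the family of restricted orders need not restrict to a single fixed order — only to a single slope — on $\pi_1(\partial M_i)$; when the peripheral subgroup fails to be malnormal (as it must for Seifert fibred pieces, whose fibre is central), the partial conjugation structure appearing in the Bludov--Glass condition becomes genuinely constraining and is not obviously matched by the family extracted from $<$. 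Controlling this would require a direct analysis of the dynamics of the $\pi_1(M_i)$-action near the circle of boundary directions, showing that the global action forces the existence of a conjugation-invariant order of slope $[\alpha_i]$ on the boundary. I expect this dynamical step — reconciling the Bludov--Glass compatibility furnished by orderability of $\pi_1(W)$ with the \emph{a priori} stronger invariance built into the definition of $\mathcal{D}_{ord}$ — to be the crux, and it is the reason the statement is advanced only as a conjecture.
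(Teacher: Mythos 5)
There is no proof in the paper to compare against: the statement you were asked to prove is Conjecture \ref{conj: order gluing intro}, which the authors explicitly leave open. What the paper offers is precisely the kind of partial verification you arrive at, namely Theorem \ref{thm: gluing via detection 2} (the conjecture holds when every left-order on $\pi_1(M_2)$ is boundary-cofinal), combined with Theorem \ref{thm: some not weak implies all cofinal} and Theorem \ref{thm: some not weak means weak detd = detd}, which supply that hypothesis whenever some slope on $\partial M_2$ fails to be weakly order-detected. Your reduction is sound as far as it goes: the restriction argument showing that $f$ identifies weakly detected slopes $[\alpha_1]$ and $[\alpha_2]$ is correct, and you are right that the whole content of the conjecture is the upgrade from weak to regular detection, i.e. showing $[L(g \cdot \mathfrak{o}_i; \pi_1(\partial M_i))] = [\alpha_i]$ for \emph{every} $g \in \pi_1(M_i)$, not just $g = 1$. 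You are also right about why this is not automatic: by Lemma \ref{lemma: variance under action} the conjugated orders read off the slopes of the conjugate peripheral subgroups, and Example \ref{example: variation with peripheral subgroup} shows these can genuinely vary with $g$.

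Two points of comparison with the paper's partial results. First, your ``favourable case'' is slightly weaker than what the paper proves: invoking Theorem \ref{thm: some not weak means weak detd = detd} factor by factor requires $\mathcal{D}_{ord}^{wk}(M_i) \ne \mathcal{S}(M_i)$ for \emph{both} $i$, whereas Theorem \ref{thm: gluing via detection 2} needs a hypothesis on only one factor; the mechanism there is to transfer boundary-cofinality from the restricted order $\mathfrak{o}_2 = \mathfrak{o}|_{\pi_1(M_2)}$ to $\mathfrak{o}_1 = \mathfrak{o}|_{\pi_1(M_1)}$ and then apply Proposition \ref{prop: cofinal 2} to both sides. If you want to recover the strongest known case, that one-sided statement (via Theorem \ref{thm: some not weak implies all cofinal}) is the one to aim for. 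Second, your proposed dynamical attack on the remaining case --- extracting a conjugation-invariant slope from the $\pi_1(M_i)$-orbit of the restricted order when $\mathcal{D}_{ord}^{wk}(M_i) = \mathcal{S}(M_i)$ --- is not carried out, and no argument of this kind is known: producing an order (or normal family) on $\pi_1(M_i)$ all of whose conjugates detect the same slope is exactly the open problem, closely tied to Conjecture \ref{conj: wk = reg intro}. So your submission is an honest reduction plus a correct identification of the obstruction, not a proof; that is the same position the paper itself is in, which is why the statement appears there only as a conjecture.
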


We have verified this conjecture in some cases.  Call a left-order $\mathfrak{o}$ on the fundamental group of a knot manifold $M$ {\it boundary-cofinal} if $\pi_1(\partial M)$ is not contained in any proper $\mathfrak{o}$-convex subset of $\pi_1(M)$. (See \S \ref{sec: lo groups}.) We show: 

\begin{theorem}
\label{thm: gluing via detection 2}
Suppose that $M_1$ and $M_2$ are knot manifolds such that every left-order on $\pi_1(M_2)$ is boundary-cofinal. Then $W = M_1 \cup_f M_2$ has a left-orderable fundamental group if and only if $f: \partial M_1 \xrightarrow{\; \cong \;} \partial M_2$ identifies slopes $[\alpha_1] \in \mathcal{D}_{ord}(M_1)$ on $\partial M_1$ and $[\alpha_2] \in \mathcal{D}_{ord}(M_2)$ on $\partial M_2$. 
\end{theorem}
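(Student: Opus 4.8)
The backward implication is immediate from Theorem \ref{thm: gluing via detection}, so the content lies entirely in the forward implication, which I would prove by extracting order-detecting data on each side from a single left-order on $\pi_1(W)$. By van Kampen, $\pi_1(W) = \pi_1(M_1) *_{\pi_1(T)} \pi_1(M_2)$, where $T = \partial M_1 \cong_f \partial M_2$ and $\pi_1(T) \cong \mathbb{Z} \oplus \mathbb{Z}$. Fix a left-order $\mathfrak{o}$ on $\pi_1(W)$ and let $\mathfrak{o}_i$ denote its restriction to $\pi_1(M_i)$. The two restrictions agree on $\pi_1(T)$, so the common restriction $\mathfrak{o}|_{\pi_1(T)}$ determines a single dividing line, i.e. a slope, which via $f$ is a pair of identified slopes $[\alpha_1]$ on $\partial M_1$ and $[\alpha_2]$ on $\partial M_2$. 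By construction each $\mathfrak{o}_i$ restricts to $\pi_1(\partial M_i)$ with this slope, so $[\alpha_i] \in \mathcal{D}_{ord}^{wk}(M_i)$; the task is to upgrade weak detection to order-detection on both sides.

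The engine is the fact (from the analysis of strong detection underlying the inclusion $\mathcal{D}_{ord}^{str}(M) \subseteq \mathcal{D}_{ord}(M)$) that a boundary-cofinal order which weakly detects a slope in fact strongly, and hence regularly, detects it. For $M_2$ this finishes the job at once: $\mathfrak{o}_2$ is boundary-cofinal by hypothesis, so $[\alpha_2] \in \mathcal{D}_{ord}^{str}(M_2) \subseteq \mathcal{D}_{ord}(M_2)$. The whole difficulty is therefore to show that $\mathfrak{o}_1$ is \emph{also} boundary-cofinal, i.e. that $\pi_1(\partial M_1) = \pi_1(T)$ is cofinal in $\pi_1(M_1)$, even though the hypothesis is imposed only on $M_2$.

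Here I would pass to the dynamical realization $\rho \colon \pi_1(W) \to \mbox{Homeo}_+(\mathbb{R})$ of $\mathfrak{o}$, whose basepoint orbit is order-isomorphic to $(\pi_1(W), \mathfrak{o})$ and cofinal in $\mathbb{R}$. Since the orbit map is order-faithful, it suffices to prove that $\rho(\pi_1(T))$ has no global fixed point: an abelian subgroup of $\mbox{Homeo}_+(\mathbb{R})$ with no global fixed point has nonzero translation-number homomorphism, hence basepoint orbit unbounded above and below, so every $g \in \pi_1(M_1)$ is sandwiched by two elements of $\pi_1(T)$. This exploits the fact that $\pi_1(\partial M_1)$ and $\pi_1(\partial M_2)$ are the \emph{same} subgroup $\pi_1(T)$ of $\pi_1(W)$, so a single cofinality statement for $\pi_1(T)$ feeds both factors; it would then give $\pi_1(T)$ cofinal in $\pi_1(M_1)$, so that $\mathfrak{o}_1$ is boundary-cofinal and, by the previous paragraph, $[\alpha_1] \in \mathcal{D}_{ord}(M_1)$.

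The main obstacle is precisely the dichotomy for $\rho(\pi_1(T))$: ruling out the degenerate case in which $\rho(\pi_1(T))$ fixes a point $p$. One checks that $\mathfrak{o}_2$ being boundary-cofinal does \emph{not} by itself exclude this (the elements of $\pi_1(W)$ lying below $p$ can all be forced out of $\pi_1(M_2)$), so it is here that the full strength of the hypothesis --- that \emph{every} left-order on $\pi_1(M_2)$ is boundary-cofinal --- must enter. In the degenerate case, cofinality of $\pi_1(T)$ in $\pi_1(M_2)$ forces $\rho(\pi_1(M_2))$ to fix the point $q = \inf\,\rho(\pi_1(M_2))\cdot x_0 \ge p$, and the plan is to analyze the germ of the $\pi_1(M_2)$-action at $q^+$, equivalently the convex-subgroup structure of $\mathfrak{o}$ at $q$, so as to manufacture a left-order on $\pi_1(M_2)$ in which $\pi_1(T)$ is not cofinal, contradicting the hypothesis. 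Verifying that this germ order is genuinely not boundary-cofinal is the delicate heart of the argument --- since $\pi_1(T)$ is cofinal in $\pi_1(M_2)$ its orbit already accumulates at $q$ and so has nontrivial germ there --- and it is exactly this point that explains why boundary-cofinality is required of all of $\pi_1(M_2)$ rather than of the single order at hand.
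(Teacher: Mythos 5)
Your overall strategy is the same as the paper's: the backward direction is Theorem \ref{thm: gluing via detection}, and the forward direction restricts a left-order $\mathfrak{o}$ on $\pi_1(W)$ to the two sides and tries to show both restrictions $\mathfrak{o}_1, \mathfrak{o}_2$ are boundary-cofinal, whence each order-detects the identified boundary slope. (The paper does exactly this, disposing of the crux in one sentence: ``Since $\mathfrak{o}_2$ is boundary-cofinal, so is $\mathfrak{o}_1$,'' and then applying Proposition \ref{prop: cofinal 2}.) However, your write-up has two genuine gaps.

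First, your ``engine'' is false as stated. A boundary-cofinal order that weakly detects a slope need not \emph{strongly} detect it: strong detection requires a proper convex \emph{normal} subgroup (Definition \ref{def: str det}), and by Theorem \ref{thm: str det and df quot} strong detection of a rational slope $[\alpha]$ forces $\pi_1(M(\alpha))$ to have a left-orderable quotient. For a trefoil exterior every left-order is boundary-cofinal (Example \ref{example: torus knot cofinality}) and the meridian $[\mu]$ is order-detected, yet $[\mu] \notin \mathcal{D}_{ord}^{str}(M)$ because $M(\mu) = S^3$. The implication you actually need --- boundary-cofinal plus weak detection implies regular detection --- is Proposition \ref{prop: cofinal 2}, which does not pass through strong detection at all. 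This error is local and repairable by citing the right result.

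Second, and fatally for your plan: the degenerate case cannot be ruled out, because it genuinely occurs under the hypotheses of the theorem. Your plan is to contradict ``every left-order on $\pi_1(M_2)$ is boundary-cofinal'' when $\rho(\pi_1(T))$ has a fixed point; but in that case $\rho|_{\pi_1(M_2)}$ acquires a \emph{global} fixed point (as you yourself observe), and the hypothesis in its dynamical form (Proposition \ref{prop: bdry cofinal iff no fixed pts}) constrains only fixed-point-free actions of $\pi_1(M_2)$, so it has nothing to say. Concretely: take $M_2$ a trefoil exterior and $M_1$ any knot manifold with $b_1(M_1) \geq 2$. Since $H_1(\partial M_2) \to H_1(M_2)$ is onto, $\hbox{coker}\big(H_1(M_2) \to H_1(W)\big) \cong H_1(W, M_2) \cong H_1(M_1, \partial M_1)$, which has positive rank; hence there is an epimorphism $\phi: \pi_1(W) \to \mathbb{Z}$ with $\pi_1(M_2) \leq \hbox{kernel}(\phi)$. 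The lexicographic order $\mathfrak{o}$ built from $\phi$ via Proposition \ref{prop: action yields lo} has $\hbox{kernel}(\phi)$ as a proper convex subgroup; its restriction $\mathfrak{o}_1$ then has the proper convex subgroup $\hbox{kernel}(\phi) \cap \pi_1(M_1) \supseteq \pi_1(\partial M_1)$ (proper because $\phi(\pi_1(M_1)) = \mathbb{Z}$), so $\mathfrak{o}_1$ is \emph{not} boundary-cofinal by Lemma \ref{cofinal versus convex}(1), while $\mathfrak{o}_2$ \emph{is} boundary-cofinal since it is a left-order on a trefoil group. So for such $\mathfrak{o}$ the key implication fails outright; no contradiction argument can succeed, and in fact this shows the one-line assertion in the paper's own proof is false for an arbitrary $\mathfrak{o} \in LO(W)$. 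Note this does not contradict the theorem itself: in this example $b_1(M_1) \geq 2$ gives a proper convex normal subgroup of $\pi_1(M_1)$ containing $\pi_1(\partial M_1)$, so $\mathcal{D}_{ord}(M_1) = \mathcal{S}(M_1)$ by Proposition \ref{prop: most str detected}(2) and the conclusion holds for other reasons. What a correct proof must do in the degenerate case is exactly this kind of rescue: one only gets $\mathcal{D}_{ord}^{wk}(M_1) = \mathcal{S}(M_1)$ from Proposition \ref{prop: not bdry-cofinal implies all slopes}, and upgrading that to regular detection of the glued slope is precisely the difficulty isolated by Theorem \ref{thm: some not weak means weak detd = detd} and Conjecture \ref{conj: wk = reg intro}. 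Your instinct that this step is the delicate heart was right; the repair, however, cannot take the form of a contradiction.
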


In one of the main results of the paper, we provide a sufficient condition for all left-orders on $\pi_1(M)$ to be boundary-cofinal. 

\begin{theorem} 
\label{thm: some not weak implies all cofinal} 
If there is a slope on $\partial M$ which is not weakly order-detected, then each $\mathfrak{o} \in LO(M)$ is boundary-cofinal. 
\end{theorem}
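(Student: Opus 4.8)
The plan is to prove the contrapositive: assuming that some $\mathfrak{o} \in LO(M)$ is \emph{not} boundary-cofinal, I will show that every slope on $\partial M$ is weakly order-detected, i.e. that $\mathcal{D}_{ord}^{wk}(M) = \mathcal{S}(M)$. Write $P = \pi_1(\partial M) \cong \mathbb{Z} \oplus \mathbb{Z}$. By definition of boundary-cofinality, the failure of $\mathfrak{o}$-cofinality means that $P$ is contained in a proper $\mathfrak{o}$-convex subset; let $C$ be the $\mathfrak{o}$-convex closure of $P$ (the smallest convex set containing $P$, which is easily checked to be a subgroup). Then $C$ is a proper convex subgroup of $\pi_1(M)$, and since convexity passes to the restricted order, the convex closure of $P$ inside $C$ is all of $C$; that is, $P$ is cofinal in $(C, \mathfrak{o}|_C)$. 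In the dynamical language emphasised throughout the paper, this says that in a dynamical realisation of $\mathfrak{o}$ the orbit of $P$ is bounded on one side, so that $P$ fixes the supremum (or infimum) of its orbit.

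The engine of the argument is the standard fact that the order on a convex subgroup may be replaced freely. Because $C$ is $\mathfrak{o}$-convex, the left-cosets $\pi_1(M)/C$ carry a $\pi_1(M)$-invariant linear order, and for any $\mathfrak{q} \in LO(C)$ the recipe ``compare distinct cosets using $\mathfrak{o}$, and compare elements lying in $C$ using $\mathfrak{q}$'' defines a genuine left-order $\mathfrak{o}_{\mathfrak{q}}$ on $\pi_1(M)$ with $\mathfrak{o}_{\mathfrak{q}}|_C = \mathfrak{q}$. Consequently $\mathfrak{o}_{\mathfrak{q}}|_P = \mathfrak{q}|_P$, so every slope arising as the boundary slope of some left-order on $C$ is weakly order-detected by $M$. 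Thus
$$\{\, \mathrm{slope}(\mathfrak{q}|_P) : \mathfrak{q} \in LO(C)\,\} \subseteq \mathcal{D}_{ord}^{wk}(M).$$
Since $\mathcal{D}_{ord}^{wk}(M)$ is a closed subset of $\mathcal{S}(M) \cong S^1$, it now suffices to show that the left-hand set is dense.

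To produce a dense set of boundary slopes from left-orders on $C$, I would exploit the cofinality of $P$ in $C$: for each primitive class $\gamma \in P$ I want a left-order $\mathfrak{q}$ on $C$ in which $\langle \gamma \rangle$ is infinitesimal within $P$, so that $\mathfrak{q}|_P$ has slope $[\gamma]$; realising a dense (e.g. rational) set of such $\gamma$ then finishes the contrapositive via closedness. Cofinality of $P$ in $C$ is precisely the property that should prevent $C$ from being a rigid finite extension of $P$ that freezes some direction. In the cleanest situation the convex closure collapses, $C = P$, and then $P$ is itself $\mathfrak{o}$-convex, hence relatively convex, so \emph{every} order on $\mathbb{Z}\oplus\mathbb{Z}$ extends and all slopes are realised directly, with no recourse to density or closedness.

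I expect the main obstacle to be this last step: controlling the possible convex closures $C$ of a non-cofinal peripheral $\mathbb{Z}\oplus\mathbb{Z}$ and showing each realises a dense set of $P$-slopes. The representative obstruction to be excluded is the Klein-bottle phenomenon, in which $P$ sits with index two in $\pi_1(N)$ and the rational-longitude direction $[\lambda_N]$ can never be made infinitesimal; but such a subgroup cannot occur as a \emph{proper} convex subgroup containing the peripheral torus of a knot manifold, since an essential Klein bottle meeting $\partial M$ forces $M = N$ and hence $C = \pi_1(M)$, contradicting properness. It is here that the topology of knot manifolds, rather than pure order theory, must enter, and carrying out this structural analysis of $C$ is the crux of the proof.
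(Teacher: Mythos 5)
Your high-level skeleton (work contrapositively, find a proper convex subgroup $C$ containing $\pi_1(\partial M)$, swap orders on $C$ freely, then combine density of realised slopes with closedness of $\mathcal{D}_{ord}^{wk}(M)$) is the same as the paper's, but both of its load-bearing steps have gaps. First, your construction of $C$ fails: for a \emph{left}-order the convex hull of a subgroup need not be a subgroup (this is only automatic for bi-orders). Left-invariance bounds $g_1g_2$ by $g_1k$, not by elements of $P$, and one can build counterexamples dynamically: if $h$ has bounded orbit of $0$ and $g(0)$ lies in the closure of that orbit while $g^2(0)$ lies far outside it, then $g$ is in the convex hull of $\langle h\rangle$ but $g^2$ is not. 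Nor can you instead take the smallest $\mathfrak{o}$-convex \emph{subgroup} containing $P$, since that may be all of $\pi_1(M)$: non-cofinality gives a proper convex sub\emph{set}, not subgroup. The paper's Lemma \ref{cofinal versus convex}(2) repairs exactly this point, and the repair requires changing the order: in a dynamic realisation $P$ fixes a point $x$, the stabiliser $\mbox{Stab}_{\rho_\mathfrak{o}}(x)$ is a proper subgroup containing $P$, and it is convex not for $\mathfrak{o}$ itself but for some order in $\mathfrak{O}_x$ (Proposition \ref{prop: action yields lo} and Lemma \ref{lemma: orders associated to ideal points 2}). Since the theorem only needs \emph{some} left-order admitting a proper convex subgroup containing $P$, this substitution is harmless, but as written your $C$ need not exist.

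The more serious gap is the one you flag yourself and leave open: producing left-orders on $C$ whose restrictions to $P$ realise a dense set of slopes. This is precisely the content of the paper's Proposition \ref{prop: not bdry-cofinal implies all slopes}, and it is not obtained by a ``structural analysis of $C$'' (your Klein-bottle scenario is already excluded for free, since proper convex subgroups have infinite index, but infinitely many other candidates for $C$ remain and no group-theoretic classification is available). The paper's argument is genuinely topological: $C = \pi_1(W)$ for the infinite-index cover $W \to M$, which is non-compact, and $\partial M$ lifts to a torus $T \subset \partial W$; for every rational slope $\alpha$ outside a nowhere dense exceptional set $Z^*$ (controlled by a homological argument together with Gordon--Luecke's bound on reducible fillings), the Dehn filling $W(\alpha)$ has locally indicable, hence left-orderable, fundamental group (compact core plus Howie--Short, then Burns--Hale), and the image of $\pi_1(T)$ in $\pi_1(W(\alpha))$ is infinite cyclic. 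Consequently the lexicographic order on $C$ built from $1 \to \langle\langle\alpha\rangle\rangle_C \to C \to \pi_1(W(\alpha)) \to 1$ restricts to $P$ with $\langle\alpha\rangle$ convex, i.e.\ with slope $[\alpha]$; swapping this order into $\mathfrak{o}$ via Proposition \ref{prop: still convex}(2) and invoking closedness finishes the proof. Without this Dehn-filling input (or an equivalent), your argument remains a plan rather than a proof.
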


Guided by what is known in the Heegaard Floer situation (\cite{RR, HRW}), we expect that knot manifolds $M$ satisfying $\mathcal{D}_{ord}^{wk}(M) \ne  \mathcal{S}(M)$ can be characterised as the family of \emph{Floer simple} knot manifolds. These are the knot manifolds $M$ for which there are at least two slopes in $\mathcal{S}(M)$ which are \emph{not} $NLS$-detected. 

\begin{conjecture}
 \label{conj: floer simple implies bdry-cofinal} 
A knot manifold $M$ is Floer simple if and only if $\mathcal{D}_{ord}^{wk}(M) \ne  \mathcal{S}(M)$ (and therefore each $\mathfrak{o} \in LO(M)$ is boundary-cofinal). 
\end{conjecture}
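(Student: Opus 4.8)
The parenthetical assertion is free: the moment $\mathcal{D}_{ord}^{wk}(M) \neq \mathcal{S}(M)$ is established, Theorem \ref{thm: some not weak implies all cofinal} guarantees that every $\mathfrak{o} \in LO(M)$ is boundary-cofinal. So the substance is the biconditional between Floer simplicity and $\mathcal{D}_{ord}^{wk}(M) \neq \mathcal{S}(M)$, and the plan is to prove each implication after passing to the dynamical language of actions $\rho \colon \pi_1(M) \to \mathrm{Homeo}_+(\mathbb{R})$ obtained as dynamical realisations of left-orders. In these terms the working form of the statement is the fixed-point dichotomy of the introduction: for a Floer simple $M$, an action $\rho$ has a global fixed point exactly when $\rho|_{\pi_1(\partial M)}$ does. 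The forcing direction — a fixed point of the peripheral subgroup propagating to all of $\pi_1(M)$ — is the dynamical avatar of boundary-cofinality, and it is precisely this propagation that prevents a given peripheral slope from being weakly detected.

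For $\mathcal{D}_{ord}^{wk}(M) \neq \mathcal{S}(M) \Rightarrow M \text{ Floer simple}$, I would first invoke Theorem \ref{thm: some not weak means weak detd = detd} to upgrade the hypothesis to $\mathcal{D}_{ord}(M) = \mathcal{D}_{ord}^{wk}(M) \neq \mathcal{S}(M)$, a \emph{proper closed} subset of $\mathcal{S}(M) \cong S^1$. Closedness is the decisive point: it rules out the degenerate complement $\mathcal{S}(M) \setminus \{[\alpha]\}$, so the complement of $\mathcal{D}_{ord}(M)$ is a non-empty open arc, and hence contains infinitely many, in particular at least two, rational slopes. Feeding each such rational slope into Theorem \ref{thm: topological def of order-detection} identifies its non-detection with the non-left-orderability of the gluing $M \cup_f N$ that identifies $[\lambda_N]$ with $[\alpha]$; the expected identification of $\mathcal{D}_{ord}(M)$ with the set of $NLS$-detected slopes (\cite{RR, HRW, BGH1}) then converts these into at least two L-space filling slopes, which is Floer simplicity.

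For the reverse implication $M \text{ Floer simple} \Rightarrow \mathcal{D}_{ord}^{wk}(M) \neq \mathcal{S}(M)$, the natural input is the concrete structure of Floer simple knot manifolds supplied by the immersed-curve/bordered invariant of \cite{RR, HRW}: the invariant is a single embedded essential curve whose associated L-space filling slopes form a non-degenerate interval. The goal is to convert the rigidity of this curve into an obstruction to dynamics, showing that for each slope $[\gamma]$ interior to that interval there is no dynamical realisation of a left-order on $\pi_1(M)$ whose restriction to $\pi_1(\partial M)$ has slope $[\gamma]$ — equivalently, that any $\pi_1(\partial M)$-action of slope $[\gamma]$ extending over $\pi_1(M)$ is forced to acquire a global fixed point. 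This exhibits $[\gamma] \notin \mathcal{D}_{ord}^{wk}(M)$ and simultaneously establishes the fixed-point dichotomy.

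The main obstacle in both directions is the lack of a functorial bridge between Heegaard Floer homology (or its immersed-curve repackaging) and the existence or non-existence of $\pi_1(M)$-actions on $\mathbb{R}$: there is no known procedure that manufactures an $\mathbb{R}$-action from Floer-theoretic input, nor one that extracts an L-space filling from a failure of orderability. Each implication is thus, in effect, an instance of the still-open L-space conjecture for the gluings $M \cup_f N$, so an unconditional argument appears to demand either a direct construction of the relevant $\mathbb{R}$-actions (or taut foliations) governed by the immersed curve of a Floer simple manifold, or a purely dynamical/group-theoretic description of the fundamental groups of such manifolds that circumvents Floer homology altogether. I expect the second implication to be the harder one, since building the obstruction to dynamics out of the Floer-simple structure is exactly the step where genuinely new ideas are required.
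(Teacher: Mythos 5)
The statement you were asked to prove is one of the paper's named conjectures (Conjecture \ref{conj: floer simple implies bdry-cofinal}), and the paper offers no proof of it --- only supporting evidence (torus knot exteriors and certain pretzel, twisted torus and cable knot exteriors via \cite{BC, CW1, CW2}, and $1$-bridge braid and $(1,1)$ $L$-space knot exteriors via Corollary \ref{cor: bdry cofinal iff no fixed pts} together with \cite{Nie1, Nie2}). So there is no in-paper argument to compare yours against, and your central assessment is the correct one: both implications are, at present, instances of the open (relative) $L$-space conjecture, and no unconditional proof is available with the paper's tools. Your conditional reductions are also accurate and faithful to the paper's framework: the parenthetical claim is exactly Theorem \ref{thm: some not weak implies all cofinal}; the upgrade from ``one undetected slope'' to ``an open arc of undetected slopes'' correctly uses the closedness of $\mathcal{D}_{ord}^{wk}(M)$ together with Theorem \ref{thm: some not weak means weak detd = detd}; and the translation of non-detection of a rational slope into non-left-orderability of $\pi_1(M \cup_f N)$ is Theorem \ref{thm: topological def of order-detection}. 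The final step of your forward direction, converting these into $L$-space fillings, rests on the conjectured identification of $\mathcal{D}_{ord}(M)$ with the $NLS$-detected slopes (\cite[Conjecture 2.13]{BGH1}), which you flag explicitly --- that is exactly where the paper, too, leaves the matter.

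One imprecision worth correcting: the fixed-point dichotomy of Corollary \ref{cor: bdry cofinal iff no fixed pts} is equivalent to \emph{every} $\mathfrak{o} \in LO(M)$ being boundary-cofinal, and this is a consequence of $\mathcal{D}_{ord}^{wk}(M) \ne \mathcal{S}(M)$ (Theorem \ref{thm: some not weak implies all cofinal}), but it is not known to be equivalent to it; no result in the paper recovers an undetected slope from boundary-cofinality of all orders. So calling the dichotomy ``the working form of the statement'' overstates things: establishing the dichotomy for Floer simple manifolds would prove the paper's dynamical rephrasing in the introduction, but not the conjecture as stated. Your plan for the reverse implication survives this because it targets the stronger conclusion directly --- producing, for a slope $[\gamma]$ interior to the $L$-space interval, a global fixed point for any extension over $\pi_1(M)$ of a boundary action of slope $[\gamma]$, hence $[\gamma] \notin \mathcal{D}_{ord}^{wk}(M)$ --- but you should keep the two statements distinct, since only the stronger one yields the conjecture.
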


As examples, $L$-space knot exteriors are known to be Floer simple, and among such knots it is known that non-trivial torus knot exteriors have the property that $\mathcal{D}_{ord}^{wk}(M) \ne  \mathcal{S}(M)$ and that the same is true for certain pretzel knots, twisted torus knots and cables of such knots (\cite{BC, CW1, CW2}).  From Corollary \ref{cor: bdry cofinal iff no fixed pts}, we know that the exteriors of the closures of a $1$-bridge braids (\cite{Nie1}), which are $L$-space knots, and $(1,1)$ $L$-space knot exteriors (\cite{Nie2}) provide further examples. 

Lastly we introduce a strong form of order-detection for slopes on the boundary of a knot manifold $M$, defining it intrinsically in terms of relatively convex normal subgroups of $\pi_1(M)$ and showing that it is essentially equivalent to the left-orderability of the fundamental groups of the associated Dehn fillings: 

\begin{theorem}  
\label{thm: str det and df quot}  
Suppose that $[\alpha]$ is a rational slope on $\partial M$. Then $[\alpha] \in \mathcal{D}_{ord}^{str}(M)$ if and only if $\pi_1(M(\alpha))$ has a left-orderable quotient. 
\end{theorem}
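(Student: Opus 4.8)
The plan is to prove both directions by unwinding the intrinsic definition of strong order-detection into the statement that $\alpha$ lies in a proper relatively convex normal subgroup of $\pi_1(M)$, and then matching this with the existence of a nontrivial left-orderable quotient of $\pi_1(M(\alpha)) = \pi_1(M)/\langle\langle \alpha \rangle\rangle$. Two inputs drive the argument. First, an elementary fact about extensions: for a normal subgroup $C \triangleleft G$, the subgroup $C$ is relatively convex if and only if both $C$ and $G/C$ are left-orderable. (If $C$ is $\mathfrak{o}$-convex then $\mathfrak{o}$ restricts to $C$ and descends to $G/C$; conversely, the lexicographic order built from left-orders on $G/C$ and on $C$ makes $C$ convex, and requires no conjugacy-invariance.) Second, the theorem of Boyer, Rolfsen and Wiest that a compact, connected, orientable, irreducible $3$-manifold has left-orderable fundamental group whenever it surjects onto a nontrivial left-orderable group.

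For the forward implication, suppose $[\alpha] \in \mathcal{D}_{ord}^{str}(M)$. By definition this furnishes a proper relatively convex normal subgroup $C \triangleleft \pi_1(M)$ with $\alpha \in C$. By the extension fact, $\pi_1(M)/C$ is left-orderable, and it is nontrivial since $C$ is proper. As $\alpha \in C$ and $C$ is normal, $\langle\langle \alpha \rangle\rangle \subseteq C$, so $\pi_1(M)/C$ is a quotient of $\pi_1(M(\alpha))$. Hence $\pi_1(M(\alpha))$ has a nontrivial left-orderable quotient.

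For the converse, suppose $\pi_1(M(\alpha))$ admits a surjection onto a nontrivial left-orderable group $L$. Precomposing with the Dehn-filling surjection $\pi_1(M) \twoheadrightarrow \pi_1(M(\alpha))$ yields a surjection $q \colon \pi_1(M) \twoheadrightarrow L$ with $\alpha \in K := \ker q$. Since $M$ is a knot manifold, half-lives-half-dies gives $b_1(M) \geq 1$, so $\pi_1(M)$ surjects onto $\mathbb{Z}$ and is left-orderable by the Boyer--Rolfsen--Wiest theorem; consequently its subgroup $K$ is left-orderable. As $\pi_1(M)/K \cong L$ is also left-orderable, the extension fact shows that $K$ is relatively convex. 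Thus $K$ is a proper relatively convex normal subgroup of $\pi_1(M)$ containing $\alpha$, which is exactly the certificate that $[\alpha] \in \mathcal{D}_{ord}^{str}(M)$.

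The substantive content is concentrated in the left-orderability of knot-manifold groups: this is what promotes the \emph{quotient} $L$ into a genuine relatively convex \emph{normal} subgroup, by guaranteeing that the kernel $K$ is itself left-orderable. The only point requiring care in the bookkeeping is that $\alpha$ represents a nontrivial element of $\pi_1(M)$, which holds because $\partial M$ is incompressible (a knot manifold is not a solid torus), so that $\langle\langle \alpha \rangle\rangle$ is a proper normal subgroup and the quotients above are the intended ones. Should the definition of $\mathcal{D}_{ord}^{str}(M)$ demand in addition that the relatively convex normal subgroup meet $\pi_1(\partial M)$ in exactly $\langle \alpha \rangle$, one further step is needed in the converse: refining $K$ (whose peripheral trace could a priori be all of $\pi_1(\partial M)$) to a normal relatively convex subgroup with the correct boundary trace, the difficulty there being to preserve normality. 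I would arrange this by choosing the ambient left-order on $\pi_1(M)$ to restrict on $\pi_1(\partial M)$ to an order detecting $[\alpha]$, using $b_1(M) \geq 1$ and, in the borderline case, the homomorphism to $\mathbb{R}$ dual to the rational longitude.
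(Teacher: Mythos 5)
Your proof is correct and follows essentially the same route as the paper: the forward direction factors $\pi_1(M)\to\pi_1(M)/C$ through $\pi_1(M(\alpha))$ using $\langle\langle\alpha\rangle\rangle\leq C$, and the converse takes $C=\ker\bigl(\pi_1(M)\to\pi_1(M(\alpha))\to L\bigr)$, notes it is left-orderable as a subgroup of the left-orderable group $\pi_1(M)$, and makes it convex via the lexicographic order on the extension $1\to C\to\pi_1(M)\to L$. Your closing worry is moot: Definition \ref{def: str det} only requires $\langle\alpha\rangle\leq C\cap\pi_1(\partial M)$, not equality, so no refinement of the kernel's peripheral trace is needed.
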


Since the fundamental group of an orientable, irreducible $3$-manifold is left-orderable if it admits a left-orderable quotient (\cite[Theorem 1.1]{BRW}), we have the following corollary. 

\begin{corollary} 
\label {cor: str detect and dehn filling}
Suppose that $[\alpha]$ is a rational slope on $\partial M$. If $\pi_1(M(\alpha))$ is left-orderable, then $[\alpha]$ is strongly order-detected.  Conversely, 
if $[\alpha]$ is strongly order-detected and $M(\alpha)$ is irreducible, then $\pi_1(M(\alpha))$ is left-orderable. 
\qed
\end{corollary}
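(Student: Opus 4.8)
The plan is to deduce the corollary directly from Theorem \ref{thm: str det and df quot} together with the cited result of Boyer--Rolfsen--Wiest \cite[Theorem 1.1]{BRW}; no further machinery should be needed, since Theorem \ref{thm: str det and df quot} already translates strong order-detection of a rational slope into the existence of a left-orderable quotient of the filled group, and \cite[Theorem 1.1]{BRW} promotes such a quotient back to left-orderability whenever the filling is irreducible.

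For the first implication, I would observe that left-orderability of $\pi_1(M(\alpha))$ trivially exhibits this group as a left-orderable quotient of itself, via the identity homomorphism. Theorem \ref{thm: str det and df quot} then applies verbatim to give $[\alpha] \in \mathcal{D}_{ord}^{str}(M)$, i.e. $[\alpha]$ is strongly order-detected. The only thing to record here is that the standing convention (dictated by the $L$-space conjecture, under which the $L$-space $S^3$ must \emph{not} have left-orderable fundamental group) is that the trivial group is not left-orderable; hence a left-orderable $\pi_1(M(\alpha))$ is automatically nontrivial and the identity map genuinely realises it as a left-orderable quotient.

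For the converse, I would assume that $[\alpha]$ is strongly order-detected and that $M(\alpha)$ is irreducible. By Theorem \ref{thm: str det and df quot}, $\pi_1(M(\alpha))$ admits a left-orderable quotient. The manifold $M(\alpha)$ is closed, connected, and orientable---orientability being inherited from the knot manifold $M$---and it is irreducible by hypothesis, so the hypotheses of \cite[Theorem 1.1]{BRW} are satisfied. That theorem then upgrades the left-orderable quotient to left-orderability of $\pi_1(M(\alpha))$ itself, completing the argument.

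The only point demanding any care, and the step I would treat as the main (if modest) obstacle, is the bookkeeping around the trivial group: one must ensure that the left-orderable quotient furnished by Theorem \ref{thm: str det and df quot} is genuinely nontrivial, since \cite[Theorem 1.1]{BRW} requires a nontrivial homomorphism onto a nontrivial left-orderable group. Under the convention above this is immediate, so I would flag the convention once at the outset, after which both implications follow with no calculation.
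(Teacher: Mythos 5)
Your proof is correct and takes essentially the same route as the paper, which states the corollary with no further argument precisely because it follows immediately from Theorem \ref{thm: str det and df quot} combined with \cite[Theorem 1.1]{BRW} (restated as Theorem \ref{thm: brw}) applied to the orientable, irreducible manifold $M(\alpha)$. Your care over nontriviality of the quotient is consistent with the paper's standing convention in \S\ref{subsec: generalities} that a left-orderable group is by definition non-trivial, so no extra bookkeeping is in fact needed.
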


We remark that there are at most three reducible Dehn fillings of an irreducible knot manifold (\cite{GLu}), so strong order-detection does indeed give an accurate picture of which fundmental groups of manifolds obtained via Dehn filling are left-orderable.

\subsection{Organisation of the paper.}

We review some basic notions and properties of left-ordered groups in \S \ref{sec: lo groups} and the construction of the dynamic realisation of a left-order on a countable group in \S \ref{sec: dyn reals} including a discussion of the family of left-orders obtained from the associated action on the real line. Section \ref{sec: lo and 3mflds} contains background material on slopes and left-orderable $3$-manifold groups. The notions of weak order-detection, (regular) order-detection, and strong order-detection are developed in \S \ref{sec: weak}, \S \ref{sec: reg}, and \S \ref{sec: str} respectively. Section \ref{sec: bdry cof and dynamics} translates the order-theoretic notion of boundary-cofinality into a condition on actions on the real line.

\section{Left-orderable groups} 
\label{sec: lo groups}
In this section we review some basic notions and properties of left-ordered groups. 

\subsection{Generalities}
\label{subsec: generalities} 
A {\it left-order} $\mathfrak{o}$ on a non-trivial group $G$ can be defined by a total order $<_{\mathfrak{o}}$ on $G$ that is invariant under left-multiplication, or by a semigroup $P(\mathfrak{o}) \subset G$ satisfying $G = P(\mathfrak{o}) \sqcup \{1\} \sqcup P(\mathfrak{o})^{-1}$.  (Here we use $A^{-1}$ to denote $\{ g^{-1} \; | \; g \in A\}$ for a subset $A \subseteq G$.)  One can see the equivalence of these notions by noting that $P(\mathfrak{o})  = \{ g \in G \mid 1 <_{\mathfrak{o}} g \} \subset G$, the {\it positive cone} of $\mathfrak{o}$, satisfies $G = P(\mathfrak{o}) \sqcup \{1\} \sqcup P(\mathfrak{o})^{-1}$.  Conversely, any semigroup $S \subset G$ such that $G = S \sqcup \{1 \} \sqcup S^{-1}$ determines a left-order $\mathfrak{o}$ and its corresponding left-invariant total order $<_{\mathfrak{o}}$ of $G$ according to $g<_{\mathfrak{o}}h$ if and only if $g^{-1}h \in S$.  A group is called {\it left-orderable} if it is non-trivial and admits a left-order. 

Let $\mathfrak{o}$ be a left-order on a group $G$. An $\mathfrak{o}$-{\it convex} subset of $G$ is a subset $A \subseteq G$ such that if $k, h \in A$ and $g \in G$ satisfy $k <_{\mathfrak{o}} g <_{\mathfrak{o}} h$, then $g \in A$. For $h, k \in G$ we define $\mathfrak{o}$-convex subsets $(h, k), [h, k], (h, \infty), $ etc. in the usual way. 

The $\mathfrak{o}$-{\it convex hull} of a subset $A$ of $G$ is 
$$C(A) = \{g \in G \; | \; \mbox{ there are $a_1, a_2 \in A$ such that } a_1 \leq_{\mathfrak{o}}  g \leq_{\mathfrak{o}}  a_2 \}.$$ 
We say that a subset $A$ of $G$ is $\mathfrak{o}$-{\it cofinal} if its $\mathfrak{o}$-convex hull is all of $G$, and call an element $g \in G$ $\mathfrak{o}$-cofinal if the cyclic subgroup $\langle g \rangle \subset G$ is $\mathfrak{o}$-cofinal.   We will often say ``cofinal" or ``convex hull" for short when the left-order ${\mathfrak{o}}$ is understood. 

The set of left-orders on a group $G$ is denoted by $LO(G)$.  Endowing $LO(G)$ with the Sikora topology \cite{Si} yields a compact, Hausdorff, totally disconnected space which is metrisable when $G$ is countable. Setting 
$$P(g \cdot \mathfrak{o}) = g P(\mathfrak{o}) g^{-1}$$
determines an action of $G$ on $LO(G)$ by homeomorphisms.
 
A {\it normal family} of left-orders on $G$ is a $G$-invariant subset $\mathcal{N}$ of $LO(G)$. For instance, the orbit  
$$\mathcal{O}(\mathfrak{o}) = \{g \cdot \mathfrak{o} \; | \; g \in G\} \subseteq LO(G)$$
of $\mathfrak{o} \in LO(G)$ is a normal family, as is its closure $\overline{\mathcal{O}(\mathfrak{o})}$ in $LO(G)$. 

There is a continuous involution on $LO(G)$ given by taking {\it opposites}: For $\mathfrak{o} \in LO(G)$, $\mathfrak{o}^{op}$ is the left-order defined by 
$$g <_{\mathfrak{o}^{op}} h \Leftrightarrow h <_{\mathfrak{o}} g$$
Equivalently, 
$$P(\mathfrak{o}^{op}) = P(\mathfrak{o})^{-1}$$

\subsection{Left-orders on $\mathbb Z^2$} 
\label{subsec: lo z2} 
A key example for us is the case $G = \mathbb Z^2$. The basic properties of positive cones imply that given a left-ordering $\mathfrak{o}$ of $\mathbb Z^2$, there is a line $L({\mathfrak{o}}) \subset H_1(\mathbb Z^2; \mathbb R) = \mathbb Z^2  \otimes \mathbb R \cong \mathbb R^2$ uniquely determined by the fact that all elements of $\mathbb Z^2$ which lie to one side of it are $\mathfrak{o}$-positive and all elements lying to the other side are $\mathfrak{o}$-negative. (See \cite[Lemma 3.3]{CR1}.) We say that $L({\mathfrak{o}}) $ has {\it rational slope} if $L_0 = L(\mathfrak{o}) \cap \mathbb Z^2 \cong \mathbb Z$, in which case $L_0$ is $\mathfrak{o}$-convex. Otherwise we say that it has {\it irrational slope}.

Each line $L$ in $\mathbb R^2$ through $(0,0)$ is realised as $L({\mathfrak{o}})$ for some $\mathfrak{o} \in LO(\mathbb Z^2)$. Further, there are exactly two orders realising $L$ when it  has irrational slope and four when it has rational slope. More precisely, suppose that $L = L({\mathfrak{o}})$. If
\vspace{-.2cm}
\begin{itemize}

\item $L$ has irrational slope, $\{\mathfrak{o}, \mathfrak{o}^{op}\}$ are the two left-orders on $\mathbb Z^2$ which realise $L$.

\vspace{.2cm} \item $L$ has rational slope and $L_0 = L \cap \mathbb Z^2 \cong \mathbb Z$, let $\mathfrak{o}^* \in LO(\mathbb Z^2)$ be defined by 
$$\left\{ \begin{array}{c} 
\mathfrak{o}^*|_{L_0} = (\mathfrak{o}|_{L_0})^{op} \\ 
P(\mathfrak{o}^*)  \setminus L_0= P(\mathfrak{o})   \setminus L_0
\end{array} \right.$$
Then $\{\mathfrak{o}, \mathfrak{o}^{op}, \mathfrak{o}^*, (\mathfrak{o}^*)^{op}\}$ are the four left-orders on $\mathbb Z^2$ which realise $L$. 
\end{itemize}
For every $\mathfrak{o} \in LO(\mathbb Z^2)$  the set of $\mathfrak{o}$-cofinal elements is precisely $\mathbb Z^2 \setminus L(\mathfrak{o})$. 

\subsection{Convex subgroups} 
\label{subsec: convex subgroups} 
If $\mathfrak{o}$ is a left-order on a group $G$, it is straightforward to show that a subgroup $C \subset G$ is $\mathfrak{o}$-convex if and only if the inequality $1 <_\mathfrak{o} g <_\mathfrak{o} h$ for $g \in G$ and $h \in C$ implies that $g \in C$. Similarly, $C$ is $\mathfrak{o}$-convex if and only the inequality $h <_\mathfrak{o} g <_\mathfrak{o} 1$ for $g \in G$ and $h \in C$ implies that $g \in C$. 

The convexity condition has some immediate consequences:
\begin{itemize}

\item proper convex subgroups $C$ of $G$ are of infinite index, for if $g \not \in C$ then $g^n \not \in C$ for all non-zero integers $n$; 

\vspace{.2cm} \item proper convex subgroups $C$ of $G$ are $\mathfrak{o}$-bounded, for if $g \in P(\mathfrak{o}) \setminus C$, then $C \subset [g^{-1}, g]$. 

\vspace{.2cm} \item the set of $\mathfrak{o}$-convex subgroups of a group $G$ is linearly ordered by inclusion, for if $C, D$ are $\mathfrak{o}$-convex and $d  \in P(\mathfrak{o}) \cap (D \setminus C)$, then $C \subset [d^{-1}, d] \subseteq D$.

\end{itemize}

\begin{lemma} 
\label{lemma: convex and cosets}
Let $C$ be a subgroup of a group $G$ with left-order $\mathfrak{o}$.

$(1)$ If $C$ is $\mathfrak{o}$-convex, then $P(\mathfrak{o}) \setminus C$ is a union of right $C$-cosets.

$(2)$ $C$ is $\mathfrak{o}$-convex if and only if $P(\mathfrak{o}) \setminus C$ is a union of left $C$-cosets. 
\end{lemma}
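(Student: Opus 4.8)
Throughout I will write $P = P(\mathfrak{o})$ for the positive cone and work from the two subgroup criteria recorded just before the lemma: $C$ is $\mathfrak{o}$-convex if and only if ($1 <_{\mathfrak{o}} g <_{\mathfrak{o}} h$ with $h \in C$ forces $g \in C$), and equivalently if and only if ($h <_{\mathfrak{o}} g <_{\mathfrak{o}} 1$ with $h \in C$ forces $g \in C$). Two elementary facts will be used repeatedly without comment. First, since $C$ is a subgroup, for $c \in C$ one has $cg \in C \Leftrightarrow g \in C \Leftrightarrow gc \in C$; hence whenever $g \notin C$ its entire left coset $gC$ and its entire right coset $Cg$ are automatically disjoint from $C$, so in every case the only thing left to check is the \emph{sign} of the translated element. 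Second, I will use left-invariance of $<_{\mathfrak{o}}$ together with the observation that $g \in P \Leftrightarrow g^{-1} <_{\mathfrak{o}} 1 \Leftrightarrow g^{-1} \notin P$. The whole argument thus reduces to showing that an assumed sign change forces membership in $C$, contradicting disjointness.

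For part (1), I would establish $C \cdot (P \setminus C) \subseteq P \setminus C$, which is exactly the assertion that $P \setminus C$ is a union of right cosets $Cg$. Fix $g \in P \setminus C$ and $c \in C$; disjointness from $C$ is automatic, so I must show $cg \in P$. Left-multiplying $1 <_{\mathfrak{o}} g$ by $c$ gives $c <_{\mathfrak{o}} cg$. If $c \geq_{\mathfrak{o}} 1$ this already yields $1 <_{\mathfrak{o}} cg$. If instead $c <_{\mathfrak{o}} 1$, then assuming $cg <_{\mathfrak{o}} 1$ would place $c <_{\mathfrak{o}} cg <_{\mathfrak{o}} 1$ with $c \in C$, so the second convexity criterion forces $cg \in C$, contradicting $cg \notin C$. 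Either way $cg \in P$.

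For part (2) I would prove both implications. For the forward direction I show $(P \setminus C) \cdot C \subseteq P \setminus C$, i.e. that $P \setminus C$ is a union of left cosets $gC$: given $g \in P \setminus C$ and $c \in C$, if $gc <_{\mathfrak{o}} 1$ then left-multiplying by $g^{-1}$ gives $c <_{\mathfrak{o}} g^{-1}$, and since $g^{-1} <_{\mathfrak{o}} 1$ we obtain $c <_{\mathfrak{o}} g^{-1} <_{\mathfrak{o}} 1$ with $c \in C$; the convexity criterion then forces $g^{-1} \in C$, hence $g \in C$, a contradiction, so $gc \in P$. For the converse I assume $P \setminus C$ is a union of left cosets and verify the first convexity criterion: suppose $1 <_{\mathfrak{o}} g <_{\mathfrak{o}} h$ with $h \in C$ but $g \notin C$. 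Then $g^{-1} h \in P$ and $g^{-1} h \notin C$, so $g^{-1} h \in P \setminus C$; its left coset is $(g^{-1} h) C = g^{-1} C$, which therefore lies in $P \setminus C$. But $1 \in C$ gives $g^{-1} \in g^{-1} C \subseteq P$, contradicting $g \in P$. Hence $g \in C$ and $C$ is $\mathfrak{o}$-convex.

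The individual computations are short, so I do not expect a serious obstacle; the only real care needed is bookkeeping about which side one multiplies on, given that $<_{\mathfrak{o}}$ is left-invariant but not right-invariant. The genuine crux, and the reason the two parts land on opposite cosets, is the converse of part (2): the argument succeeds only because one feeds the \emph{inverse} combination $g^{-1} h$ into the coset hypothesis, so that the offending coset collapses to $g^{-1} C$ and exhibits the positive element $g^{-1}$, which contradicts $g \in P$ outright. Lining up this inverse-and-coset manoeuvre correctly is the one step where an unlucky choice (using $g h^{-1}$, or appealing to right cosets) stalls the argument.
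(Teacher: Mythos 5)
Your proof is correct and follows essentially the same route as the paper's: both parts reduce to the two subgroup convexity criteria via left-invariance, with your part (2) reverse direction using exactly the paper's key move of feeding $g^{-1}h$ (the paper's $g^{-1}c$) into the left-coset hypothesis to collapse onto $g^{-1}C$ and produce the contradiction $g, g^{-1} \in P(\mathfrak{o})$. The only cosmetic difference is in part (1), where you split on the sign of $c$ while the paper multiplies by $c^{-1}$ to get $1 <_{\mathfrak{o}} g <_{\mathfrak{o}} c^{-1}$ directly; this is an equivalent computation.
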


\begin{proof}
First suppose that $C$ is $\mathfrak{o}$-convex. If $g \in P(\mathfrak{o}) \setminus C$ and $c \in C$, then $cg >_{\mathfrak{o}} 1$ as otherwise $1 <_{\mathfrak{o}} g <_{\mathfrak{o}} c^{-1}$ so that $g \in C$, contrary to our choices. Similarly $gc >_\mathfrak{o} 1$ as otherwise $c <_\mathfrak{o} g^{-1} <_\mathfrak{o} 1$, which would imply $g \in C$. Then both $Cg$ and $gC$ are contained in $P(\mathfrak{o}) \setminus C$, which proves (1) and the forward direction of (2). 

For the reverse direction of (2), suppose that $P(\mathfrak{o}) \setminus C$ is a union of left $C$-cosets and that $1 <_\mathfrak{o} g <_\mathfrak{o} c$, where $g \in G$ and $c \in C$. If  $g \not \in C$, then $gC \subset P(\mathfrak{o}) \setminus C$ and $g^{-1}C = (g^{-1}c) C \subset P(\mathfrak{o}) \setminus C$. But then $g, g^{-1} \in P(\mathfrak{o})$, which is impossible. Thus $g \in C$ and therefore $C$ is $\mathfrak{o}$-convex, which completes the proof. 
\end{proof}

Consequently,

\begin{proposition} 
\label{prop: still convex}
Suppose that $\mathfrak{o}_1 \in LO(G)$ and $C$ is $\mathfrak{o}_1$-convex.

$(1)$ If $\mathfrak{o}_2 \in LO(G)$ and $P(\mathfrak{o}_1) \setminus C = P(\mathfrak{o}_2) \setminus C$, then $C$ is $\mathfrak{o}_2$-convex.

$(2)$ If $\mathfrak{o}_0 \in LO(C)$, then $P(\mathfrak{o}_0) \sqcup \big(P(\mathfrak{o}_1) \setminus C\big)$ is the positive cone of a left-order on $G$. 
\qed
\end{proposition}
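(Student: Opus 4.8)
The plan is to dispatch part (1) immediately from Lemma \ref{lemma: convex and cosets}, and to prove part (2) by checking directly that
$$S := P(\mathfrak{o}_0) \sqcup \big(P(\mathfrak{o}_1) \setminus C\big)$$
satisfies the two defining properties of a positive cone: that it is a semigroup, and that $G = S \sqcup \{1\} \sqcup S^{-1}$. Throughout I would write $B = P(\mathfrak{o}_1) \setminus C$.

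For part (1), since $C$ is $\mathfrak{o}_1$-convex, Lemma \ref{lemma: convex and cosets}(2) shows that $B = P(\mathfrak{o}_1) \setminus C$ is a union of left $C$-cosets. The hypothesis $P(\mathfrak{o}_2) \setminus C = P(\mathfrak{o}_1) \setminus C = B$ then says that $P(\mathfrak{o}_2) \setminus C$ is also a union of left $C$-cosets, so the converse direction of Lemma \ref{lemma: convex and cosets}(2) yields at once that $C$ is $\mathfrak{o}_2$-convex.

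For the partition statement in part (2), I would first record that $B^{-1} = P(\mathfrak{o}_1)^{-1} \setminus C$ (using that $C = C^{-1}$ is a subgroup), that $P(\mathfrak{o}_0), P(\mathfrak{o}_0)^{-1} \subseteq C$, and that $B, B^{-1} \subseteq G \setminus C$. Combining $C = P(\mathfrak{o}_0) \sqcup \{1\} \sqcup P(\mathfrak{o}_0)^{-1}$ with $G = P(\mathfrak{o}_1) \sqcup \{1\} \sqcup P(\mathfrak{o}_1)^{-1}$ gives $B \sqcup B^{-1} = G \setminus C$, and these containments make $S \sqcup \{1\} \sqcup S^{-1}$ a disjoint union equal to $C \sqcup (G \setminus C) = G$.

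The semigroup property is the crux, and I would argue it in four cases according to where two elements $s_1, s_2 \in S$ lie. If both lie in $P(\mathfrak{o}_0)$, then $s_1 s_2 \in P(\mathfrak{o}_0)$ since $P(\mathfrak{o}_0)$ is a semigroup. If $s_1 \in P(\mathfrak{o}_0) \subseteq C$ and $s_2 \in B$ (respectively $s_1 \in B$ and $s_2 \in P(\mathfrak{o}_0) \subseteq C$), then $s_1 s_2 \in B$ because $B$ is a union of left (respectively right) $C$-cosets by Lemma \ref{lemma: convex and cosets}. The remaining, and main, obstacle is the case $s_1, s_2 \in B$: here $s_1 s_2 \in P(\mathfrak{o}_1)$ since $P(\mathfrak{o}_1)$ is a semigroup, but \emph{a priori} the product could land inside $C$, where membership in $S$ would require $\mathfrak{o}_0$-positivity, which is not controlled by $\mathfrak{o}_1$. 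The key observation that removes this difficulty is that the product cannot land in $C$ at all: if $c = s_1 s_2 \in C$, then $1 <_{\mathfrak{o}_1} s_2 = s_1^{-1} c$ gives $s_1 <_{\mathfrak{o}_1} c$, so $1 <_{\mathfrak{o}_1} s_1 <_{\mathfrak{o}_1} c$ with $c \in C$, whence $\mathfrak{o}_1$-convexity of $C$ forces $s_1 \in C$, contradicting $s_1 \in B$. Thus $s_1 s_2 \in P(\mathfrak{o}_1) \setminus C = B \subseteq S$, completing the verification that $S$ is a positive cone.
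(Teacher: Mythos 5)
Your proof is correct and follows the route the paper intends: the paper states this proposition without an explicit proof, presenting it ("Consequently, ...") as an immediate consequence of Lemma \ref{lemma: convex and cosets}, and your argument is precisely the verification left to the reader, with part (1) read off from the coset characterisation and part (2) checked directly, the key point being that a product of two elements of $B = P(\mathfrak{o}_1) \setminus C$ cannot land in $C$ by $\mathfrak{o}_1$-convexity. One cosmetic slip: in the mixed cases you need $Cs_2 \subseteq B$ (for $s_1 \in C$, $s_2 \in B$) and $s_1C \subseteq B$ (for $s_1 \in B$, $s_2 \in C$), i.e.\ that $B$ is a union of \emph{right} $C$-cosets (Lemma \ref{lemma: convex and cosets}(1)) and of \emph{left} $C$-cosets (Lemma \ref{lemma: convex and cosets}(2)) respectively, whereas you cite the two in the opposite order; since $C$ is $\mathfrak{o}_1$-convex both facts are available from the lemma, so this is a labelling issue only and nothing in the argument breaks.
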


\begin{definition}
{\rm 
Let $G$ be a group and $\mathfrak{o}_1, \mathfrak{o}_2 \in LO(G)$. If $C$ is a proper $\mathfrak{o}_1$-convex subgroup of $G$ and $P(\mathfrak{o}_1) \setminus C = P(\mathfrak{o}_2) \setminus C$ we say that $\mathfrak{o}_1$ is {\it convexly related} to $\mathfrak{o}_2$ and that $\mathfrak{o}_1$ and $\mathfrak{o}_2$ differ by a {\it convex swap}.  
}
\end{definition}

Since the set of $\mathfrak{o}$-convex subgroups of a group $G$ is linearly ordered by inclusion, Proposition \ref{prop: still convex} implies that being convexly related is an equivalence relation. 

It also follows from Lemma \ref{lemma: convex and cosets} that if $g < h$, $gC \neq hC$ and $c_1, c_2 \in C$, then $gc_1 < hc_2$.  To see this, note that $g^{-1}h \in P \setminus C$ so that $g^{-1}h$ lies in a right coset of $C$ consisting entirely of positive elements, whence $c_1^{-1}g^{-1}h$ is positive.  Similarly since $c_1^{-1}g^{-1}h \in P \setminus C$ and $P \setminus C$ is a union of left $C$-cosets, we arrive at $1 < c_1^{-1}g^{-1}hc_2$ and so $gc_1 < hc_2$. We therefore arrive at the following standard result:

\begin{proposition} \cite[Proposition 2.1.3]{KM}
\label{prop: induced order}
Suppose that $\mathfrak{o} \in LO(G)$ and $C$ is $\mathfrak{o}$-convex. Then $\mathfrak{o}$ induces a $($left$)$ $G$-invariant total order $\overline{\mathfrak{o}}$ on $G/C$ as follows $:$ $gC <_{\overline{\mathfrak{o}}} hC$ if and only if  $g^{-1}h \not \in C$ and $g <_{\mathfrak{o}} h$. 
\qed
\end{proposition}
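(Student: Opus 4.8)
The plan is to check, in order, that the relation $<_{\overline{\mathfrak{o}}}$ is well-defined on cosets, that it is a strict total order, and that it is invariant under the left $G$-action $k \cdot (gC) = (kg)C$. Throughout, the workhorse is the observation recorded just before the statement, namely that whenever $gC \ne hC$ and $g <_{\mathfrak{o}} h$ one has $gc_1 <_{\mathfrak{o}} hc_2$ for all $c_1, c_2 \in C$; I will refer to this as the \emph{coset comparison fact}. The only other ingredient is the $\mathfrak{o}$-convexity of $C$ itself.

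First I would establish well-definedness. Suppose $g' = gc$ and $h' = hd$ with $c, d \in C$, so that $g'C = gC$ and $h'C = hC$; in particular $g^{-1}h \in C$ if and only if $(g')^{-1}h' \in C$, so the condition $gC \ne hC$ does not depend on the choice of representatives. Assuming $gC \ne hC$, the coset comparison fact applied to $g <_{\mathfrak{o}} h$ yields $g' = gc <_{\mathfrak{o}} hd = h'$, and applying it to $g' <_{\mathfrak{o}} h'$ with the shifts $c^{-1}, d^{-1}$ recovers $g <_{\mathfrak{o}} h$; hence $g <_{\mathfrak{o}} h \Leftrightarrow g' <_{\mathfrak{o}} h'$ and the relation is independent of the representatives.

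Next I would verify the order axioms. Irreflexivity is immediate, since $gC <_{\overline{\mathfrak{o}}} gC$ would force $g^{-1}g = 1 \notin C$. For trichotomy, if $gC \ne hC$ then $g \ne h$, so totality of $<_{\mathfrak{o}}$ gives exactly one of $g <_{\mathfrak{o}} h$ or $h <_{\mathfrak{o}} g$, hence exactly one of $gC <_{\overline{\mathfrak{o}}} hC$ or $hC <_{\overline{\mathfrak{o}}} gC$. The substantive point is transitivity: given $g_1 <_{\mathfrak{o}} g_2 <_{\mathfrak{o}} g_3$ with $g_1 C \ne g_2 C$ and $g_2 C \ne g_3 C$, transitivity of $<_{\mathfrak{o}}$ gives $g_1 <_{\mathfrak{o}} g_3$, but I still must rule out $g_1 C = g_3 C$. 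This is exactly where convexity enters: if $g_3 = g_1 c$ with $c \in C$, then left-multiplying $g_1 <_{\mathfrak{o}} g_2 <_{\mathfrak{o}} g_1 c$ by $g_1^{-1}$ gives $1 <_{\mathfrak{o}} g_1^{-1}g_2 <_{\mathfrak{o}} c$, whence $\mathfrak{o}$-convexity of $C$ forces $g_1^{-1}g_2 \in C$, contradicting $g_1 C \ne g_2 C$.

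Finally, $G$-invariance is a direct computation: for $k \in G$, the identity $(kg)^{-1}(kh) = g^{-1}h$ shows simultaneously that $gC \ne hC \Leftrightarrow (kg)C \ne (kh)C$, while left-invariance of $<_{\mathfrak{o}}$ gives $g <_{\mathfrak{o}} h \Leftrightarrow kg <_{\mathfrak{o}} kh$, so $gC <_{\overline{\mathfrak{o}}} hC \Leftrightarrow (kg)C <_{\overline{\mathfrak{o}}} (kh)C$. I expect the main obstacle to be the distinctness step in transitivity, which is the one place the convexity hypothesis is genuinely needed; everything else follows formally from the coset comparison fact together with the order axioms for $\mathfrak{o}$.
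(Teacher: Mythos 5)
Your proof is correct and takes essentially the same route as the paper, which states this result with a citation to Kopytov--Medvedev and treats it as an immediate consequence of the coset comparison fact established just beforehand (itself derived from Lemma~\ref{lemma: convex and cosets}). You have simply written out the routine verification the paper leaves implicit, including the one genuinely non-formal step --- using $\mathfrak{o}$-convexity of $C$ to rule out $g_1C = g_3C$ in the transitivity argument --- correctly.
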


The following proposition can be seen as a converse to Proposition \ref{prop: induced order}. 
 
\begin{proposition}   \cite[Corollary 5.1.4]{KM}
\label{prop: action yields lo}
Suppose that a non-trivial group $G$ acts by order-preserving permutations on a totally ordered set $(E, <_E)$. Fix $x \in E$ and let $\mbox{Stab}(x) \leq G$ be the stabiliser of $x$. Given a left-order $\mathfrak{o}_0$ on $\mbox{Stab}(x)$, there is a left-order $\mathfrak{o}_x$ on $G$ extending $\mathfrak{o}_0$ given by
$$g_1 <_{\mathfrak{o}_x} g_2 \hbox{ if and only if } \left\{ 
\begin{array}{cl} 
1 <_{\mathfrak{o}_0} g_1^{-1}g_2 & \hbox{when } g_1^{-1}g_2 \in \mbox{Stab}(x) \\ 
& \\
g_1 \cdot x <_E g_2 \cdot x & \hbox{when } g_1^{-1}g_2 \not \in \mbox{Stab}(x)
\end{array}
\right.$$
Further, if $g_1, g_2 \in G$ then $g_1 \leq_{\mathfrak{o}_x} g_2$ implies that $g_1 \cdot x \leq_E g_2 \cdot x$. Consequently, $\mbox{Stab}(x)$ is $\mathfrak{o}_x$-convex. 
\end{proposition}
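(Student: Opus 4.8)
The plan is to verify directly that the prescription for $<_{\mathfrak{o}_x}$ arises from a positive cone, and then to read off the remaining assertions. Setting $g_1 = 1$ in the definition shows that the putative positive cone is
$$P = P(\mathfrak{o}_0) \;\sqcup\; \{\, g \in G \setminus \mbox{Stab}(x) : x <_E g \cdot x \,\},$$
where the first piece collects the elements of $\mbox{Stab}(x)$ that are $\mathfrak{o}_0$-positive and the second collects the elements moving $x$ strictly to the right. I would establish that $G = P \sqcup \{1\} \sqcup P^{-1}$ and that $P$ is closed under multiplication; by the discussion of positive cones in \S\ref{subsec: generalities} this exactly produces a left-order $\mathfrak{o}_x$, and unwinding the definition with $g_1, g_2 \in \mbox{Stab}(x)$ shows at once that $\mathfrak{o}_x$ restricts to $\mathfrak{o}_0$.

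For the trichotomy, fix $g \neq 1$ and split according to whether $g \in \mbox{Stab}(x)$. If $g \in \mbox{Stab}(x)$ then so is $g^{-1}$, and the claim reduces to the trichotomy for the left-order $\mathfrak{o}_0$ on $\mbox{Stab}(x)$. If $g \notin \mbox{Stab}(x)$ then $g \cdot x \neq x$ and $g^{-1} \notin \mbox{Stab}(x)$; since the action is order-preserving, applying $g^{-1}$ to the inequality between $x$ and $g \cdot x$ interchanges the alternatives $x <_E g \cdot x$ and $x <_E g^{-1}\cdot x$, so exactly one of $g, g^{-1}$ lies in $P$.

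The heart of the argument is closure of $P$ under multiplication, which I would organise into the four cases determined by whether $g, h \in P$ lie in $\mbox{Stab}(x)$. When both lie in $\mbox{Stab}(x)$ this is the semigroup property of $P(\mathfrak{o}_0)$; when exactly one does, order-preservation together with the fixing of $x$ by the stabiliser element gives $gh \cdot x >_E x$ directly. The only delicate case is $g, h \notin \mbox{Stab}(x)$, and this is the step I expect to be the main obstacle, since \emph{a priori} $gh$ could fall into $\mbox{Stab}(x)$, where its sign is governed by $\mathfrak{o}_0$ rather than by the action. The key observation that resolves it is that $h \cdot x >_E x$ implies $gh \cdot x >_E g \cdot x >_E x$ after applying the order-preserving map $g$; in particular $gh \cdot x \neq x$, so $gh \notin \mbox{Stab}(x)$ and hence $gh \in P$. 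Thus the problematic overlap with $\mbox{Stab}(x)$ never occurs for a product of two positive elements.

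It remains to prove the monotonicity statement and to deduce convexity. If $g_1 \leq_{\mathfrak{o}_x} g_2$ then $g_1^{-1} g_2 \in P \cup \{1\}$: when $g_1^{-1}g_2 \in \mbox{Stab}(x)$ it fixes $x$, giving $g_1 \cdot x = g_2 \cdot x$, and otherwise $x <_E (g_1^{-1}g_2) \cdot x$, giving $g_1 \cdot x <_E g_2 \cdot x$ after applying $g_1$; either way $g_1 \cdot x \leq_E g_2 \cdot x$. Finally, to see that $\mbox{Stab}(x)$ is $\mathfrak{o}_x$-convex I would invoke the convexity criterion recorded in \S\ref{subsec: convex subgroups}: if $1 <_{\mathfrak{o}_x} g <_{\mathfrak{o}_x} h$ with $h \in \mbox{Stab}(x)$, then monotonicity yields $x \leq_E g \cdot x \leq_E h \cdot x = x$, forcing $g \cdot x = x$, that is $g \in \mbox{Stab}(x)$.
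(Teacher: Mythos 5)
Your proof is correct: the positive cone $P = P(\mathfrak{o}_0) \sqcup \{\, g \in G \setminus \mbox{Stab}(x) : x <_E g\cdot x \,\}$ is the right object, your four-case verification of the semigroup property (including the key point that a product of two positive non-stabiliser elements cannot fall back into $\mbox{Stab}(x)$, since $gh\cdot x >_E g\cdot x >_E x$) is complete, and the monotonicity and convexity deductions are sound. Note that the paper itself gives no argument for this proposition --- it is quoted directly from \cite[Corollary 5.1.4]{KM} --- so there is nothing in the text to compare against; your verification is the standard one and fills that gap correctly.
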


\begin{definition} 
\label{def: rel convex}
A subgroup $C \subset G$ is {\it relatively convex} if there exists an ordering $\mathfrak{o}$ of $G$ such that $C$ is $\mathfrak{o}$-convex. 
\end{definition}

Since relatively convex subgroups of $G$ are closed under taking roots, not all subgroups of all left-orderable groups have this property. Conversely, a nontrivial left-orderable group that admits no proper, relatively convex subgroup must be an abelian group of rank one \cite[Proposition 5.1.9]{KM}.

\begin{proposition} \cite[Proposition 5.1.10]{KM}
\label{prop: relatively convex}
Suppose that $G$ is a left-orderable group. Then an arbitrary intersection of relatively convex subgroups of $G$ is relatively convex.
\end{proposition}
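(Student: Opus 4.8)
The plan is to realise the intersection as a point-stabiliser of an order-preserving action on a suitable totally ordered set, and then to invoke Proposition \ref{prop: action yields lo}. Write $\{C_i\}_{i \in I}$ for the given family of relatively convex subgroups and set $C = \bigcap_{i \in I} C_i$. If $C$ is trivial there is nothing to prove, since $\{1\}$ is $\mathfrak{o}$-convex for every $\mathfrak{o} \in LO(G)$; so I would assume $C \ne \{1\}$. For each $i$, choose $\mathfrak{o}_i \in LO(G)$ for which $C_i$ is $\mathfrak{o}_i$-convex. By Proposition \ref{prop: induced order}, $\mathfrak{o}_i$ induces a $G$-invariant total order $<_i$ on the coset space $G/C_i$, on which $G$ acts order-preservingly by left multiplication.

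Next I would assemble these into a single totally ordered $G$-set. Fix a well-ordering of the index set $I$ (via the well-ordering theorem) and let $E = \prod_{i \in I}(G/C_i)$, equipped with the lexicographic order determined by the well-ordering of $I$ together with the orders $<_i$ on the factors: two distinct tuples are compared at the least index at which they differ. One checks that this is a total order on $E$ and that the diagonal action $g \cdot (x_i)_i = (g x_i)_i$ is by order-preserving permutations, since for a pair of distinct tuples the least index of disagreement is unchanged by the action and each $<_i$ is $G$-invariant.

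Finally, I would consider the basepoint $x^\ast = (C_i)_i \in E$. Its stabiliser is $\{g \in G : g C_i = C_i \text{ for all } i\} = \bigcap_{i \in I} C_i = C$. Since $C$ is a nontrivial subgroup of the left-orderable group $G$, the restriction of any left-order on $G$ equips $C$ with a left-order $\mathfrak{o}_0$. Proposition \ref{prop: action yields lo}, applied to the action of $G$ on $(E, <_E)$, the point $x^\ast$, and the order $\mathfrak{o}_0$, then produces a left-order $\mathfrak{o}_{x^\ast}$ on $G$ for which $\mbox{Stab}(x^\ast) = C$ is $\mathfrak{o}_{x^\ast}$-convex. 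Hence $C$ is relatively convex.

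The only delicate points are set-theoretic: handling an arbitrary, possibly uncountable, index set forces the use of a well-ordering to make the lexicographic order well-defined and total, and one must verify with care that the diagonal action genuinely preserves it. I expect the main obstacle to be not conceptual but organisational, namely correctly formulating the lexicographic comparison across an arbitrary index set and confirming the stabiliser computation; once the totally ordered $G$-set is in place, Proposition \ref{prop: action yields lo} supplies the required order.
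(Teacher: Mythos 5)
Your proof is correct. Note, however, that the paper does not actually prove this proposition: it is quoted directly from Kopytov--Medvedev \cite[Proposition 5.1.10]{KM}, so there is no in-paper argument to compare against. What you have written is essentially the classical argument behind the cited result, assembled from the paper's own toolkit: Proposition \ref{prop: induced order} turns each $C_i$ into the stabiliser of a point in an ordered $G$-set $G/C_i$, the well-ordered lexicographic product makes $\prod_i G/C_i$ a single totally ordered $G$-set on which the diagonal action is order-preserving (the set of indices of disagreement of two tuples is preserved by the action, hence so is its least element, and each factor order is $G$-invariant), the stabiliser of the base tuple $(C_i)_i$ is exactly $\bigcap_i C_i$, and Proposition \ref{prop: action yields lo} then exhibits this stabiliser as a convex subgroup for some left-order. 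All the delicate points you flag do go through: the lexicographic order over a well-ordered index set is indeed total and transitive, and your separate treatment of the case $\bigcap_i C_i = \{1\}$ is needed, since Proposition \ref{prop: action yields lo} requires a left-order on the stabiliser and the paper's conventions do not order the trivial group. The one thing your write-up buys beyond the citation is self-containedness; the one thing it presupposes is Proposition \ref{prop: action yields lo}, which the paper also takes from \cite{KM} without proof, so the argument is not from first principles---but that is a legitimate use of a stated result.
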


\section{Dynamic realisations and associated left-orders}  
\label{sec: dyn reals}

Throughout this section we take $G \ne \{1\}$ to be a countable group endowed with a left-order $\mathfrak{o}$. We review the construction of the dynamic realisation of the pair $(G, \mathfrak{o})$ and analyse the families of left-orders obtained from the associated action on the real line.

\subsection{Tight embeddings of left-orderable groups} 
An $\mathfrak{o}${\it -gap} in $G$ is a pair of elements $h, k \in G$ such that $h < k$ and $(h, k) = \emptyset$. 

Consider an order-preserving embedding $t: (G, 1) \to (\mathbb R, 0)$ which is {\it tight}: if $-\infty \leq a < b \leq \infty$ and the open  interval $(a, b)$ is contained in $\mathbb R \setminus t(G)$, then there is a gap $\{h, k\}$  in $G$ such that $(a, b) \subseteq (t(h), t(k))$. In other words, the only gaps in $t(G) \subset \mathbb R$ are ones which come from $\mathfrak{o}$-gaps in $G$. Tight embeddings exist; the standard order-preserving embedding of $G$ into $\mathbb R$ (cf. \cite[page 29, \S 2.4]{CR}) is tight. They are also well-defined up to composition with an element of $\mbox{Homeo}_+(\mathbb R)$ (cf. \S \ref{subsec: dyn real}). It is an immediate consequence of the definition that tight embeddings are unbounded above and below. 

Partition the reals into three subsets
$$\mathbb R = t(G) \sqcup (\overline{t(G)} \setminus t(G)) \sqcup (\mathbb R \setminus \overline{t(G)})$$ 
We refer to the points of $\overline{t(G)} \setminus t(G)$ as {\it ideal points} of $\mathfrak{o}$. It follows from the definition of a tight embedding that
\vspace{-.cm}
\begin{itemize}

\item each ideal point of $\mathfrak{o}$ is both a right limit point and left limit point of $t(G)$; 

\vspace{.2cm} \item the gaps in $G$  correspond bijectively to the components of $\mathbb R \setminus \overline{t(G)}$ under the map which identifies a gap $\{h, k\}$ with the interval $(t(h), t(k))$. 

\end{itemize}
These properties are useful in supplying the details of the construction of a dynamic realisation of the pair $(G, \mathfrak{o})$ below. 

\subsection{Dynamic realisations} 
\label{subsec: dyn real}
Given a tight embedding $t: G \to \mathbb R$, there is an associated action of $G$ on $\mathbb R$ :      
\begin{itemize}

\item Begin with the free action of $G$ on the subspace $t(G)$ of $\mathbb R$ : $g \cdot t(g') = t(gg')$. The tightness of $t$ implies that the action is by homeomorphisms.

\vspace{.2cm} \item Extend the action over the closure $\overline{t(G)}$ of $t(G)$: If $g \in G$ and $\{t(g_n)\}$ is a convergent sequence  
of elements of $t(G)$, the tightness of $t$ guarantees that $\{t(gg_n)\}$ is also a convergent sequence. The correspondence 
$$g \cdot \lim_n t(g_n) = \lim_n t(gg_n)$$ 
determines an action of $G$ on $\overline{t(G)}$ by homeomorphisms. 

\vspace{.2cm} \item Extend the homeomorphism of $\overline{t(G)}$ determined by $g \in G$ to $\mathbb R \setminus \overline{t(G)}$  by convex interpolation: First note that the components of $\mathbb R \setminus \overline{t(G)}$ are open intervals whose endpoints are elements of $\overline{t(G)}$. Therefore if $(x, y)$ is a component of $\mathbb R \setminus \overline{t(G)}$ with $x, y \in \overline{t(G)}$, we may define
$$g \cdot((1-s)x + s y) = (1-s)(g \cdot x) + s(g \cdot y)$$
where $0 \leq s \leq 1$.  This extension determines a homeomorphism $\rho_{\mathfrak{o}}(g)$ of $\mathbb R$ and the correspondence
$$\rho_{\mathfrak{o}}: G \to \mbox{Homeo}(\mathbb R)$$
is a homomorphism. Since $G$ acts freely on $t(G)$, the action is faithful and since the left action of $G$ on $t(G)$ is $\mathbb R$-order-preserving, the image of $\rho_{\mathfrak{o}}$ is contained in $\mbox{Homeo}_+(\mathbb R)$. Further, since
$$g <_{\mathfrak{o}} h \Leftrightarrow \rho_{\mathfrak{o}}(g)(0) = t(g) < t(h) = \rho_{\mathfrak{o}}(h)(0),$$
$\rho_{\mathfrak{o}}$ determines $\mathfrak{o}$ by considering the orbit of $0$. 
\end{itemize}
The fact that $t(G)$ is unbounded above and below implies  that there is no $x \in \mathbb R$ which is fixed by every element of $G$. In other words, the action has no global fixed points. Such an action is called {\it non-trivial}. 
 
The representation $\rho_{\mathfrak{o}}: G \to \mbox{Homeo}_+(\mathbb R)$ depends only on the embedding $t$. Further, if $t': (G, 1) \to  (\mathbb R, 0)$ is another tight embedding, the method used to construct $\rho_{\mathfrak{o}}$ can be used to show that the correspondence 
$t(G) \to t'(G), t(g) \mapsto t'(g)$, extends to an orientation-preserving homeomorphism $f$ of the reals in such a way that if $\rho_{\mathfrak{o}}': G \to \mbox{Homeo}_+(\mathbb R)$ is the homomorphism associated to $t'$, then $\rho_{\mathfrak{o}}' = f \circ \rho_{\mathfrak{o}} \circ f^{-1}$. Thus $\rho_{\mathfrak{o}}$ is well-defined up to conjugation in $\mbox{Homeo}_+(\mathbb R)$.  

Summarising, 

\begin{proposition}
\label{prop: dyn realisation} 
Given a countable group $G$ with left-order $\mathfrak{o} \in LO(G)$, there is a faithful representation $\rho_{\mathfrak{o}}: G \to \mbox{Homeo}_+(\mathbb R)$, well-defined up to conjugation in $\mbox{Homeo}_+(\mathbb R)$, which induces a non-trivial action of $G$ on $\mathbb R$. Further,  $\rho_{\mathfrak{o}}$ determines $\mathfrak{o}$ via 
$$g <_{\mathfrak{o}} h \Leftrightarrow \rho_{\mathfrak{o}}(g)(0)  < \rho_{\mathfrak{o}}(h)(0).$$
\end{proposition}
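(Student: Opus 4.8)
The plan is to assemble the representation from the staged construction laid out in \S\ref{subsec: dyn real}, and then to verify in turn the four asserted properties: that each $\rho_{\mathfrak{o}}(g)$ is a homeomorphism, that $g \mapsto \rho_{\mathfrak{o}}(g)$ is a faithful homomorphism, that the action is non-trivial, and that $\rho_{\mathfrak{o}}$ is well-defined up to conjugation and recovers $\mathfrak{o}$ from the orbit of $0$. First I would fix a tight embedding $t: (G,1) \to (\mathbb R,0)$, whose existence is guaranteed by the tightness of the standard order-preserving embedding of $G$ into $\mathbb R$. The initial action $g \cdot t(g') = t(gg')$ on $t(G)$ is forced by left-invariance of $\mathfrak{o}$, is free because $t$ is injective, and is by homeomorphisms of $t(G)$ in the subspace topology by tightness.

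The main technical work is extending each $g$-action to all of $\mathbb R$ and confirming that the resulting map $\rho_{\mathfrak{o}}(g)$ is a homeomorphism. I would first extend over the closure $\overline{t(G)}$ by continuity: the key point is that tightness forces $g$ to send convergent sequences in $t(G)$ to convergent sequences, so that $g \cdot \lim_n t(g_n) = \lim_n t(gg_n)$ is well-defined independently of the approximating sequence and remains order-preserving on $\overline{t(G)}$. One then extends over the complementary open intervals using the affine (convex interpolation) formula. Continuity at the ideal points is the delicate step: I would invoke the two bulleted properties of tight embeddings, namely that each ideal point is a two-sided limit point of $t(G)$ and that the gaps of $G$ correspond bijectively to the components of $\mathbb R \setminus \overline{t(G)}$, to check that the affine extensions glue continuously onto the extension already defined on $\overline{t(G)}$. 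That $\rho_{\mathfrak{o}}(g)$ is then a bijection with continuous inverse follows by running the same argument for $g^{-1}$; order-preservation on $t(G)$ propagates to order-preservation on all of $\mathbb R$, placing the image in $\mbox{Homeo}_+(\mathbb R)$.

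Next I would verify that $g \mapsto \rho_{\mathfrak{o}}(g)$ is a homomorphism. On $t(G)$ this is immediate from $g \cdot (h \cdot t(g')) = g \cdot t(hg') = t(ghg')$; it propagates to $\overline{t(G)}$ by continuity and to the complementary intervals because convex interpolation is natural with respect to the affine structure, so that a composite of two maps that are affine on the relevant intervals agrees with the affine extension of the composite. Faithfulness is inherited from freeness of the action on $t(G)$, and non-triviality (the absence of a global fixed point) follows from the fact that $t(G)$ is unbounded above and below, so no point of $\mathbb R$ can be fixed by every element of $G$.

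Finally, for well-definedness up to conjugation I would compare two tight embeddings $t, t'$: the correspondence $t(g) \mapsto t'(g)$ is order-preserving on orbits, and the same closure-plus-interpolation procedure extends it to an $f \in \mbox{Homeo}_+(\mathbb R)$ with $\rho_{\mathfrak{o}}' = f \circ \rho_{\mathfrak{o}} \circ f^{-1}$, as recorded in the subsection. The order-recovery statement $g <_{\mathfrak{o}} h \Leftrightarrow \rho_{\mathfrak{o}}(g)(0) < \rho_{\mathfrak{o}}(h)(0)$ is then immediate, since $\rho_{\mathfrak{o}}(g)(0) = g \cdot t(1) = t(g)$ and $t$ is order-preserving. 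I expect the continuity of the extension at ideal points to be the only genuine obstacle; once it is established, the homomorphism, faithfulness, and conjugacy verifications are essentially formal.
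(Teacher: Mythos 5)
Your proposal is correct and follows essentially the same route as the paper, which proves this proposition by exactly the staged construction you describe: tight embedding, free action on $t(G)$, extension over $\overline{t(G)}$ by continuity (using tightness), affine interpolation on the complementary intervals, with faithfulness from freeness, non-triviality from unboundedness of $t(G)$, conjugacy well-definedness by comparing two tight embeddings, and order recovery from $\rho_{\mathfrak{o}}(g)(0) = t(g)$. Nothing essential is missing; your identification of continuity at ideal points as the one delicate step matches where the paper's construction leans on the two bulleted properties of tight embeddings.
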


The representation $\rho_{\mathfrak{o}}: G \to \mbox{Homeo}_+(\mathbb R)$ is referred to as the {\it dynamic realisation} of $(G, \mathfrak{o})$.

\subsection{Left-orders associated to dynamic realisations}  
\label{subsec: left-orders from dyn reals}
We noted in Proposition \ref{prop: dyn realisation} that a left-order $\mathfrak{o} \in LO(G)$ is determined by the natural order on the orbit of $0$ under the dynamic realisation $\rho_{\mathfrak{o}}$ of $\mathfrak{o}$. Here we show how the orbit of each $x \in \mathbb R$ under $\rho_{\mathfrak{o}}$ determines a convexly related family of left-orders $\mathfrak{O}_x$.

For $x \in \mathbb R$ we denote the stabiliser $\{ g \in \pi_1(M) \; | \; \rho_{\mathfrak{o}}(g)(x) = x\}$ of $x$ under the $\rho_{\mathfrak{o}}$-action by $\mbox{Stab}_{\rho_{\mathfrak{o}}}(x)$. 

\begin{lemma} 
\label{l: stabilisers} 
If $x \in \mathbb R$, then $\hbox{Stab}_{\rho_{\mathfrak{o}}}(x) \ne \{1\}$ implies that $x$ is an ideal point of $\mathfrak{o}$. 
\end{lemma}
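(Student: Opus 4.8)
The plan is to prove the contrapositive: if $x$ is \emph{not} an ideal point of $\mathfrak{o}$, then $\mbox{Stab}_{\rho_{\mathfrak{o}}}(x) = \{1\}$. The partition $\mathbb{R} = t(G) \sqcup (\overline{t(G)} \setminus t(G)) \sqcup (\mathbb{R} \setminus \overline{t(G)})$ shows that a non-ideal point $x$ lies either in $t(G)$ or in a component of $\mathbb{R} \setminus \overline{t(G)}$, so I would treat these two cases in turn. Both reduce to the fact, built into the construction of $\rho_{\mathfrak{o}}$, that $G$ acts freely on $t(G)$ via $g \cdot t(g') = t(gg')$: if $g \cdot t(g') = t(g')$, then $t(gg') = t(g')$, and the injectivity of $t$ forces $gg' = g'$, whence $g = 1$.

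In the first case, $x = t(g_0) \in t(G)$, a relation $\rho_{\mathfrak{o}}(g)(x) = x$ reads $t(gg_0) = t(g_0)$, so $g = 1$ immediately by freeness. In the second case $x$ lies in some component of $\mathbb{R} \setminus \overline{t(G)}$; by the bijection recorded after the definition of a tight embedding, this component is an interval $(t(h), t(k))$ arising from an $\mathfrak{o}$-gap $\{h, k\}$, and in particular its endpoints lie in $t(G)$. Since each $\rho_{\mathfrak{o}}(g)$ is an orientation-preserving homeomorphism of $\mathbb{R}$ that preserves $\overline{t(G)}$, it permutes the components of $\mathbb{R} \setminus \overline{t(G)}$, carrying $(t(h), t(k))$ onto $(g \cdot t(h), g \cdot t(k)) = (t(gh), t(gk))$. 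If $g$ fixes the interior point $x$, then $g$ must carry the component containing $x$ onto itself; matching endpoints and using orientation-preservation gives $t(gh) = t(h)$ and $t(gk) = t(k)$, so again $g = 1$ by freeness.

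The argument is essentially routine once the structural properties of tight embeddings are in hand, so I do not anticipate a serious obstacle. The one point requiring care is the gap case: I must justify that an orientation-preserving homeomorphism in the image of $\rho_{\mathfrak{o}}$ which fixes an interior point of a complementary interval necessarily preserves that interval as a whole, and hence fixes its two endpoints. This is handled by the two bulleted properties of tight embeddings — that $\rho_{\mathfrak{o}}(g)$ permutes the components of $\mathbb{R} \setminus \overline{t(G)}$, and that these components correspond bijectively to $\mathfrak{o}$-gaps with endpoints in $t(G)$ — which together reduce the question back to the freeness of the action on $t(G)$.
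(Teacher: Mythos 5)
Your proposal is correct and follows essentially the same route as the paper: both argue the contrapositive by splitting into the case $x \in t(G)$, where freeness of the action on $t(G)$ immediately gives a trivial stabiliser, and the case where $x$ lies in a gap component $(t(h),t(k))$, where a stabilising element must preserve that component, hence fix its endpoints, and freeness again forces it to be the identity. The paper phrases the gap case as the stabilising element being the identity on the whole interval and then fixing $t(h)$, but this is the same argument as your endpoint-matching step.
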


\begin{proof}
It is clear that if $x \in t(G)$, then $\hbox{Stab}_{\rho_{\mathfrak{o}}}(x) = \{1\}$. If, on the other hand, $x$ is contained in a gap $(t(g), t(h))$ of $\mathfrak{o}$, then by  construction any $k \in \hbox{Stab}_{\rho_{\mathfrak{o}}}(x)$ is the identity on $(t(g), t(h))$. In particular, $k \in \hbox{Stab}_{\rho_{\mathfrak{o}}}(t(g)) = \{1\}$, which completes the proof. 
\end{proof}

The restriction of $\mathfrak{o}$ to $\hbox{Stab}_{\rho_{\mathfrak{o}}}(x)$ determines a left-order $\mathfrak{o}_x \in LO(G)$ (cf. Proposition \ref{prop: action yields lo}) with respect to which $\hbox{Stab}_{\rho_{\mathfrak{o}}}(x)$ is convex. Further, each orbit $\mathcal{O}_{\rho_{\mathfrak{o}}}(x) = \{\rho_{\mathfrak{o}}(g)(x) \; | \; g \in G\} \subset \mathbb R$ is a left $G$-set canonically identifiable with the cosets $E_x := G/\mbox{Stab}_{\rho_{\mathfrak{o}}}(x)$ under the map $g\mbox{Stab}_{\rho_{\mathfrak{o}}}(x) \mapsto \rho_{\mathfrak{o}}(g)(x)$. It is easy to see that this identification also takes the total order $\overline{\mathfrak{o}}_x$ on $E_x$ given by Proposition \ref{prop: induced order} to the natural order on $\mathcal{O}_{\rho_{\mathfrak{o}}}(x)$ coming from $\mathbb R$. 

\begin{definition}
\label{def: O_x}
{\rm 
Let $\mathfrak{O}_x$ denote the set of left-orders on $G$ obtained, vis-a-vis Proposition \ref{prop: action yields lo}, from the action of $G$ on $E_x$ 
equipped with its natural total order $\overline{\mathfrak{o}}_x$, and a choice of left-order on $\mbox{Stab}_{\rho_{\mathfrak{o}}}(x)$.
}
\end{definition}
Proposition \ref{prop: action yields lo} shows that $\mbox{Stab}_{\rho_{\mathfrak{o}}}(x)$ is $\mathfrak{o}$-convex for each $\mathfrak{o} \in \mathfrak{O}_x$ and so it is a family of convexly related left-orders. Further, $\mathfrak{O}_x$ always contains the left-order $\mathfrak{o}_x$ determined by the orders $\mathfrak{o}|_{{\tiny \hbox{Stab}}_{\rho_\mathfrak{o}}(x)}$ and 
 $\overline{\mathfrak{o}}_x$. If $\mbox{Stab}_{\rho_{\mathfrak{o}}}(x)$ is trivial, for instance if $x$ is not an ideal point of $\mathfrak{o}$, then  $\mathfrak{O}_x = \{\mathfrak{o}_x\}$.  However if $\mbox{Stab}_{\rho_{\mathfrak{o}}}(x)$ is non-trivial, then $\mathfrak{O}_x$ is uncountable unless $\mbox{Stab}_{\rho_{\mathfrak{o}}}(x)$ is a so-called ``Tararin group'' (\cite[Theorem 5.2.1]{KM}), in which case it has order $2^n$ for some $n>0$. Proposition \ref{prop: action yields lo} shows that for any 
$\mathfrak{o}_x' \in \mathfrak{O}_x$, 
$$g \leq_{\mathfrak{o}_x'} h \; \Rightarrow \; \rho_{\mathfrak{o}}(g)(x) \leq \rho_{\mathfrak{o}}(h)(x)$$ 
with equality on the right hand side if and only if $g^{-1}h \in \mbox{Stab}_{\rho_{\mathfrak{o}}}(x)$, so whether or not $g \leq_{\mathfrak{o}_x'} h$ holds is determined by the chosen ordering of $\mbox{Stab}_{\rho_{\mathfrak{o}}}(x)$. 

\begin{proposition}
\label{prop: relation to orbit}
Let $\mathfrak{o} \in LO(G)$ and $\rho_{\mathfrak{o}}$ an associated dynamical realisation. 

$(1)$ If $x \in \mathbb R$ is not an ideal point of $\mathfrak{o}$ with respect to $\rho_\mathfrak{o}$, then $\mathfrak{O}_x = \{\mathfrak{o}_x\}  \subset \mathcal{O}(\mathfrak{o})$.

$(2)$ If $x \in \mathbb R$ is an ideal point of $\mathfrak{o}$, then $\mathfrak{O}_x \cap \overline{\mathcal{O}(\mathfrak{o})} \ne \emptyset$.
\end{proposition}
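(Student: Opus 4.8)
The plan is to split according to whether $\mbox{Stab}_{\rho_{\mathfrak{o}}}(x)$ is trivial, exploiting the explicit link between the conjugation action on $LO(G)$ and the displacement of points of $t(G)$ under $\rho_{\mathfrak{o}}$. For (1), I would first invoke Lemma \ref{l: stabilisers}: if $x$ is not an ideal point then $\mbox{Stab}_{\rho_{\mathfrak{o}}}(x) = \{1\}$, so $\mathfrak{O}_x = \{\mathfrak{o}_x\}$ is a singleton and it only remains to exhibit $\mathfrak{o}_x$ in the orbit $\mathcal{O}(\mathfrak{o})$. If $x = t(g_0) \in t(G)$, then $\rho_{\mathfrak{o}}(g)(x) = t(gg_0)$, and unwinding the definition of $\mathfrak{o}_x$ via Proposition \ref{prop: action yields lo} would give $g_1 <_{\mathfrak{o}_x} g_2 \Leftrightarrow g_1 g_0 <_{\mathfrak{o}} g_2 g_0 \Leftrightarrow g_1^{-1}g_2 \in g_0 P(\mathfrak{o}) g_0^{-1}$, i.e. $\mathfrak{o}_x = g_0 \cdot \mathfrak{o}$. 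If instead $x$ lies in a gap $(t(h), t(k))$, I would observe that distinct translates $\rho_{\mathfrak{o}}(g)(x)$ lie in the pairwise disjoint gaps $(t(gh), t(gk))$, so their real order is the order of these gaps, which (since $\{h,k\}$ is an $\mathfrak{o}$-gap) reduces to $g_1 h <_{\mathfrak{o}} g_2 h$, giving $\mathfrak{o}_x = h \cdot \mathfrak{o}$. Either way $\mathfrak{o}_x \in \mathcal{O}(\mathfrak{o})$.

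For (2), the candidate elements of $\overline{\mathcal{O}(\mathfrak{o})}$ will be limits of sequences $g_n \cdot \mathfrak{o}$ with $t(g_n) \to x$, which exist because an ideal point is a limit point of $t(G)$. The computation I would run first is the identity
$$g \in P(g_n \cdot \mathfrak{o}) \iff g_n <_{\mathfrak{o}} g g_n \iff t(g_n) < \rho_{\mathfrak{o}}(g)(t(g_n)),$$
which says that $g$ is $g_n\cdot\mathfrak{o}$-positive exactly when $\rho_{\mathfrak{o}}(g)$ moves $t(g_n)$ strictly to the right. By continuity $\rho_{\mathfrak{o}}(g)(t(g_n)) \to \rho_{\mathfrak{o}}(g)(x)$, so for every $g \notin \mbox{Stab}_{\rho_{\mathfrak{o}}}(x)$ (where $\rho_{\mathfrak{o}}(g)(x) \ne x$) the truth value of $g \in P(g_n \cdot \mathfrak{o})$ stabilises to ``$\rho_{\mathfrak{o}}(g)(x) > x$''. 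By Proposition \ref{prop: action yields lo} this is precisely the condition $g \in P(\mathfrak{o}_x)$ for $g$ outside the stabiliser.

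I would then use that, as $G$ is countable, $LO(G)$ is compact and metrisable, so after passing to a subsequence I may assume $g_n \cdot \mathfrak{o} \to \mathfrak{o}^* \in \overline{\mathcal{O}(\mathfrak{o})}$. By the preceding step $P(\mathfrak{o}^*) \setminus \mbox{Stab}_{\rho_{\mathfrak{o}}}(x) = P(\mathfrak{o}_x) \setminus \mbox{Stab}_{\rho_{\mathfrak{o}}}(x)$. Since $\mbox{Stab}_{\rho_{\mathfrak{o}}}(x)$ is $\mathfrak{o}_x$-convex, Proposition \ref{prop: still convex} shows it is also $\mathfrak{o}^*$-convex and that $\mathfrak{o}^*$ differs from $\mathfrak{o}_x$ only by a convex swap on the stabiliser. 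Comparing with the positive cone of a general $\mathfrak{o}' \in \mathfrak{O}_x$ coming from Proposition \ref{prop: action yields lo}, the family $\mathfrak{O}_x$ is exactly the set of convex swaps of $\mathfrak{o}_x$ at $\mbox{Stab}_{\rho_{\mathfrak{o}}}(x)$, so $\mathfrak{o}^* \in \mathfrak{O}_x$, giving $\mathfrak{o}^* \in \mathfrak{O}_x \cap \overline{\mathcal{O}(\mathfrak{o})}$.

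The hard part will be this final identification. The sequence $g_n \cdot \mathfrak{o}$ is only controlled on $G \setminus \mbox{Stab}_{\rho_{\mathfrak{o}}}(x)$, while its behaviour on the (possibly large, even Tararin or uncountably-swappable) stabiliser is not determined by the geometry of a single ideal point. Compactness secures the existence of a limit, but one must still recognise that limit as a genuine member of $\mathfrak{O}_x$ rather than merely an order convexly related to $\mathfrak{o}_x$; this is exactly where the identification of $\mathfrak{O}_x$ with the full convex-swap family of $\mathfrak{o}_x$ at $\mbox{Stab}_{\rho_{\mathfrak{o}}}(x)$ does the essential work.
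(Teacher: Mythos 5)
Your proposal is correct and follows essentially the same route as the paper: part (1) via the explicit identification $\mathfrak{o}_{t(g_0)} = g_0 \cdot \mathfrak{o}$ (and the gap case reducing to it), and part (2) by extracting a convergent subsequence $g_n \cdot \mathfrak{o} \to \mathfrak{o}^*$ in the compact metrisable space $LO(G)$, showing the positive cones agree off $\mbox{Stab}_{\rho_{\mathfrak{o}}}(x)$, and recognising $\mathfrak{o}^*$ as a convex swap of $\mathfrak{o}_x$, hence an element of $\mathfrak{O}_x$. The only cosmetic difference is that the paper phrases the stabilisation of positive cones using balls in a word metric (its Lemma on orders associated to ideal points), whereas you argue pointwise on elements $g \notin \mbox{Stab}_{\rho_{\mathfrak{o}}}(x)$, which is the same content given how convergence in the Sikora topology is defined.
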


We divide the proof of Proposition \ref{prop: relation to orbit} into three lemmas. The first proves part (1) of the proposition. 

\begin{lemma} 
\label{lemma: conjugate orders 1}
$\;$ 

$(1)$ $\mathfrak{o}_{t(g)} = g \cdot \mathfrak{o}$ for each $g \in G$. 

$(2)$ If $x \in (t(g), t(h))$, where $\{g, h\}$ is a gap of $\mathfrak{o}$, then $\mathfrak{o}_{x} = g \cdot \mathfrak{o}$. 

\end{lemma}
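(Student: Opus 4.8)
The plan is to compute both orders explicitly and then match them. In both parts the point $x$ in question lies outside the ideal set of $\mathfrak{o}$ — either $x = t(g) \in t(G)$ or $x$ lies inside a gap interval $(t(g),t(h))$ — so Lemma \ref{l: stabilisers} (more precisely, the argument in its proof) gives $\mbox{Stab}_{\rho_{\mathfrak{o}}}(x) = \{1\}$. Consequently $\mathfrak{o}_x$ is governed purely by the orbit order, with no contribution from a stabiliser ordering: by Proposition \ref{prop: action yields lo}, $g_1 <_{\mathfrak{o}_x} g_2$ if and only if $\rho_{\mathfrak{o}}(g_1)(x) < \rho_{\mathfrak{o}}(g_2)(x)$. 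On the other side, unwinding the positive cone formula $P(g\cdot\mathfrak{o}) = gP(\mathfrak{o})g^{-1}$ shows that $g_1 <_{g\cdot\mathfrak{o}} g_2$ if and only if $(g_1 g)^{-1}(g_2 g) \in P(\mathfrak{o})$, i.e. $g_1 g <_{\mathfrak{o}} g_2 g$, which since $t$ is order-preserving is equivalent to $t(g_1 g) < t(g_2 g)$. So in each part it suffices to verify that $\rho_{\mathfrak{o}}(g_1)(x) < \rho_{\mathfrak{o}}(g_2)(x)$ holds exactly when $t(g_1 g) < t(g_2 g)$.

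For part (1), take $x = t(g)$. Since $\rho_{\mathfrak{o}}$ acts on $t(G)$ by $\rho_{\mathfrak{o}}(g_1)(t(g)) = t(g_1 g)$, the orbit comparison reads $\rho_{\mathfrak{o}}(g_1)(t(g)) < \rho_{\mathfrak{o}}(g_2)(t(g))$ if and only if $t(g_1 g) < t(g_2 g)$, which is exactly the characterisation of $g\cdot\mathfrak{o}$ above. Hence $\mathfrak{o}_{t(g)} = g\cdot\mathfrak{o}$.

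For part (2), take $x \in (t(g), t(h))$ with $\{g,h\}$ a gap. First I would record that gaps are left-invariant: if $(g,h) = \emptyset$ then $(g_1 g, g_1 h) = \emptyset$ for every $g_1$, by left-invariance of $<_{\mathfrak{o}}$, so $\{g_1 g, g_1 h\}$ is again a gap and $(t(g_1 g), t(g_1 h))$ is again a component of $\mathbb{R}\setminus\overline{t(G)}$. The convex-interpolation definition of $\rho_{\mathfrak{o}}$ then maps $(t(g), t(h))$ affinely onto $(t(g_1 g), t(g_1 h))$, carrying $x$ strictly into the latter gap. Thus $\rho_{\mathfrak{o}}(g_1)(x)$ and $\rho_{\mathfrak{o}}(g_2)(x)$ lie strictly inside the gaps determined by $\{g_1 g, g_1 h\}$ and $\{g_2 g, g_2 h\}$ respectively. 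When $g_1 \neq g_2$ these are distinct gaps, hence (via the bijection between $\mathfrak{o}$-gaps and the components of $\mathbb{R}\setminus\overline{t(G)}$) distinct, and therefore disjoint, intervals. The order of two points lying strictly inside disjoint open intervals coincides with the order of the intervals, which is the order of their left endpoints $t(g_1 g)$ and $t(g_2 g)$. So $\rho_{\mathfrak{o}}(g_1)(x) < \rho_{\mathfrak{o}}(g_2)(x)$ if and only if $t(g_1 g) < t(g_2 g)$, again matching $g\cdot\mathfrak{o}$, and $\mathfrak{o}_x = g\cdot\mathfrak{o}$.

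The only real subtlety — and the step I would write most carefully — is the geometric claim in part (2) that the relative position of $\rho_{\mathfrak{o}}(g_1)(x)$ and $\rho_{\mathfrak{o}}(g_2)(x)$ is controlled by the left endpoints of their respective gaps. This rests on the fact that distinct $g_1, g_2$ really do produce distinct, and hence disjoint (never nested), components of $\mathbb{R}\setminus\overline{t(G)}$, together with the fact that $x$ lies \emph{strictly} inside the gap so that its images never land on an endpoint. Everything else reduces to the direct substitutions above.
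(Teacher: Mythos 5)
Your proof is correct and follows essentially the same route as the paper: both parts reduce, via the triviality of $\mbox{Stab}_{\rho_{\mathfrak{o}}}(x)$ for non-ideal points, to comparing orbit points, with (1) unwound by the substitution $\rho_{\mathfrak{o}}(g_1)(t(g)) = t(g_1 g)$ and (2) reduced to (1) by the observation that $\rho_{\mathfrak{o}}$ permutes the gap intervals $\{(t(kg), t(kh))\}$ compatibly with their left endpoints. The only difference is presentational: the paper phrases (1) as an identity of positive cones, $P(\mathfrak{o}_{t(g)}) = gP(\mathfrak{o})g^{-1}$, and dismisses the interval-ordering argument in (2) as ``easy to see,'' whereas you spell out the disjointness of the gaps and the endpoint comparison explicitly.
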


\begin{proof}
(1) Since the stabiliser of $t(g)$ is trivial, we have $1 <_{\mathfrak{o}_{t(g)}} h$ if and only if $t(g) < \rho_{\mathfrak{o}}(h)(t(g)) = t(hg)$. The latter holds if and only if $g <_{\mathfrak{o}} hg$, or equivalently, $1 <_{\mathfrak{o}} g^{-1} h g$. Thus $P(\mathfrak{o}_{t(g)}) = gP(\mathfrak{o}) g^{-1}$, which completes the proof.

(2) The orbit of $(t(g), t(h))$ under $G$ is $\{(t(kg), t(kh)) \; | \; k \in G\}$, a family of disjoint open intervals in obvious bijection with $G$ whose union is $\mathbb R \setminus \overline{t(G)}$. It is easy to see that $\mathfrak{o}_{x} = \mathfrak{o}_{t(g)}$, so the result follows from (1). 
\end{proof}

Recall that the power set $\{0, 1\}^X$ of any set $X$ can be topologised by equipping $\{0, 1\}$ with the discrete topology, and $\{0, 1\}^X$ with the resulting product topology.  This makes $\{0, 1\}^X$ into a compact, Hausdorff, totally disconnected space. In the resulting topology, a sequence of subsets $\{S_n\}$ of $X$ converges to $S \subset X$ if, for every finite $F \subset X$ there exists $n \in \mathbb{N}$ such that $S \cap F = S_{n_0} \cap F$ for all $n_0 \geq n$. The Sikora topology on $LO(G)$ is obtained by identifying $LO(G)$ with its image under the embedding $LO(G) \to \{0, 1\}^G, \mathfrak{o} \mapsto P(\mathfrak{o})$.  

\begin{lemma} \label{l: orders associated to ideal points 1}
Suppose that $x$ is an ideal point of $\mathfrak{o}$ and let $\{g_n\}$ be a sequence of elements of $G$ such that $\lim_n t(g_n) = x$. Then 
$$P(\mathfrak{o}_x) \setminus \hbox{Stab}_{\rho_{\mathfrak{o}}}(x) = \lim_n \; P(\mathfrak{o}_{t(g_n)}) \setminus \hbox{Stab}_{\rho_{\mathfrak{o}}}(x).$$
\end{lemma}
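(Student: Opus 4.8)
The plan is to use the fact that convergence in the power set $\{0,1\}^G$ is pointwise, so the asserted identity holds precisely when, for each \emph{fixed} $g \in G$, the element $g$ eventually lies in $P(\mathfrak{o}_{t(g_n)}) \setminus S$ if and only if it lies in $P(\mathfrak{o}_x) \setminus S$; here I abbreviate $S := \mbox{Stab}_{\rho_{\mathfrak{o}}}(x)$, and the threshold in $n$ is allowed to depend on $g$. Thus I would fix $g$ and argue for all large $n$. When $g \in S$ there is nothing to prove: neither set-difference contains $g$, for any $n$, so the two sides agree trivially. This is in fact the reason the lemma excises $S$ from both cones — inside $S$ the relevant orders need not converge, and removing $S$ discards exactly the part of each positive cone governed by the chosen order on the stabiliser rather than by the dynamics.

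The substance is the case $g \notin S$, where $\rho_{\mathfrak{o}}(g)(x) \neq x$. Here I would unwind both positive cones via Proposition \ref{prop: action yields lo}. For the orbit of the ideal point $x$, whose stabiliser is $S$, membership of an element lying outside $S$ is decided purely dynamically, giving $g \in P(\mathfrak{o}_x) \setminus S \iff x < \rho_{\mathfrak{o}}(g)(x)$. For the orbit of $t(g_n) \in t(G)$, whose stabiliser is trivial, the same proposition gives $g \in P(\mathfrak{o}_{t(g_n)}) \setminus S \iff t(g_n) < \rho_{\mathfrak{o}}(g)(t(g_n))$ (one could alternatively route this through $\mathfrak{o}_{t(g_n)} = g_n \cdot \mathfrak{o}$ from Lemma \ref{lemma: conjugate orders 1}). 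The claim therefore reduces to showing that, for large $n$, the strict inequality $t(g_n) < \rho_{\mathfrak{o}}(g)(t(g_n))$ holds if and only if $x < \rho_{\mathfrak{o}}(g)(x)$.

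This I would settle with a sign-of-a-continuous-function argument. Set $\phi(y) = \rho_{\mathfrak{o}}(g)(y) - y$, which is continuous because $\rho_{\mathfrak{o}}(g) \in \mbox{Homeo}_+(\mathbb{R})$. Since $g \notin S$, we have $\phi(x) \neq 0$, so $\phi$ has a definite nonzero sign at $x$; as $t(g_n) \to x$, continuity forces $\phi(t(g_n)) \to \phi(x)$, whence $\phi(t(g_n))$ has the same sign as $\phi(x)$ for all large $n$. That is exactly the required equivalence of the two strict inequalities.

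I do not expect a genuine obstacle. The step demanding care is the bookkeeping when $g \notin S$: reading off correctly from Proposition \ref{prop: action yields lo} that, off the stabiliser, each order reduces to the dynamical comparison of the base point with its image under $g$. Once that reduction is secured, the non-vanishing of $\phi$ at the ideal point — which is precisely the assertion that $g$ does not fix $x$ — makes the strict inequality stable under the perturbation $x \rightsquigarrow t(g_n)$, and since pointwise convergence is all the product topology requires, no uniformity in $g$ is needed.
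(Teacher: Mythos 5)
Your proof is correct and takes essentially the same approach as the paper's: both reduce to showing that, for each fixed $g \notin \mbox{Stab}_{\rho_{\mathfrak{o}}}(x)$, the strict dynamical inequality determining the $\mathfrak{o}_x$-sign of $g$ persists at $t(g_n)$ for all large $n$, by continuity of $\rho_{\mathfrak{o}}(g)$ together with $t(g_n) \to x$. The only cosmetic difference is in packaging the convergence: the paper checks agreement on word-metric balls $B_j$, while you invoke directly that convergence in $\{0,1\}^G$ is pointwise.
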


\begin{proof}
Let $g \in G \setminus \hbox{Stab}_{\rho_{\mathfrak{o}}}(x)$ and observe that as 
$$\left\{ \begin{array}{l} 
g <_{\mathfrak{o}_x} 1 \; \Rightarrow \;  \rho_{\mathfrak{o}}(g)(x) < x,  \\ 
g >_{\mathfrak{o}_x}  1 \; \Rightarrow  \; \rho_{\mathfrak{o}}(g)(x) > x,  
\end{array} \right. $$ 
there is an integer $n(g) > 0$ such that if $n \geq n(g)$, then 
$$\left\{ \begin{array}{l} 
g <_{\mathfrak{o}_x} 1 \; \Rightarrow \; \rho_{\mathfrak{o}}(g)(t(g_n)) < t(g_n)   \mbox{ and therefore }  \; g <_{\mathfrak{o}_{g_n}} 1\\ 
g >_{\mathfrak{o}_x}  1 \; \Rightarrow  \;\rho_{\mathfrak{o}}(g)(t(g_n)) > t(g_n)  \mbox{ and therefore } \; g >_{\mathfrak{o}_{g_n}} 1
\end{array} \right . $$ 
Fix a finite generating set of $G$ and let $B_j $ be the ball of radius $j$ in $G$ in the associated word metric. If $n_0 = \max \{n(g) \; | \; g \in B_j  \setminus \mbox{Stab}_{\rho_{\mathfrak{o}}}(x)\}$, then for $n \geq n_0$ we have 
$$B_j \cap (P(\mathfrak{o}_x)  \setminus \hbox{Stab}_{\rho_{\mathfrak{o}}}(x)) = B_j \cap (P(\mathfrak{o}_{t(g_n)}) \setminus \hbox{Stab}_{\rho_{\mathfrak{o}}}(x)),$$
 which proves the lemma. 
\end{proof}

Part (2) of Proposition \ref{prop: relation to orbit} is contained in the next lemma. 

\begin{lemma} 
\label{lemma: orders associated to ideal points 2}
Suppose that $x$ is an ideal point of $\mathfrak{o}$. Then $\mathfrak{O}_x \cap \overline{\mathcal{O}(\mathfrak{o})} \ne \emptyset$.
\end{lemma}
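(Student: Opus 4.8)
The plan is to realise a member of $\mathfrak{O}_x$ as a limit of points in the orbit $\mathcal{O}(\mathfrak{o})$, exploiting the compactness of $LO(G)$. Throughout, write $S = \mbox{Stab}_{\rho_{\mathfrak{o}}}(x)$, a proper subgroup of $G$ since the $\rho_{\mathfrak{o}}$-action has no global fixed point. As $x$ is an ideal point of $\mathfrak{o}$, it is a limit point of $t(G)$, so there is a sequence $\{g_n\}$ in $G$ with $\lim_n t(g_n) = x$. By Lemma \ref{lemma: conjugate orders 1}(1) each $\mathfrak{o}_{t(g_n)} = g_n \cdot \mathfrak{o}$ lies in $\mathcal{O}(\mathfrak{o})$, so the whole game is to show that some accumulation point of this sequence lies in $\mathfrak{O}_x$.

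First I would pass to a convergent subsequence. Since $LO(G)$ is compact (a closed subspace of $\{0,1\}^G$), after relabelling we may assume $\mathfrak{o}_{t(g_n)} \to \mathfrak{o}'$ for some $\mathfrak{o}' \in LO(G)$; by construction $\mathfrak{o}' \in \overline{\mathcal{O}(\mathfrak{o})}$. The sequence $\{t(g_n)\}$ still converges to $x$, so Lemma \ref{l: orders associated to ideal points 1} applies and gives $P(\mathfrak{o}_x) \setminus S = \lim_n \big(P(\mathfrak{o}_{t(g_n)}) \setminus S\big)$. On the other hand, the embedding $\mathfrak{o} \mapsto P(\mathfrak{o})$ into $\{0,1\}^G$ is continuous and the operation $T \mapsto T \setminus S$ is continuous on $\{0,1\}^G$ (it fixes the coordinates indexed by $G \setminus S$ and sets those indexed by $S$ to $0$), so $P(\mathfrak{o}_{t(g_n)}) \setminus S \to P(\mathfrak{o}') \setminus S$. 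Since $\{0,1\}^G$ is Hausdorff, limits are unique and therefore
\begin{equation*}
P(\mathfrak{o}') \setminus S = P(\mathfrak{o}_x) \setminus S .
\end{equation*}

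It remains to see that this identity forces $\mathfrak{o}' \in \mathfrak{O}_x$. Unwinding Definition \ref{def: O_x} through Proposition \ref{prop: action yields lo}, every order in $\mathfrak{O}_x$ restricts on $S$ to the chosen left-order on $S$, while the part of its positive cone lying outside $S$ is always $\{g \notin S : x < \rho_{\mathfrak{o}}(g)(x)\} = P(\mathfrak{o}_x) \setminus S$, independently of that choice. Applying this to $\mathfrak{o}'$, let $\tilde{\mathfrak{o}}$ be the element of $\mathfrak{O}_x$ built via Proposition \ref{prop: action yields lo} from the left-order $\mathfrak{o}'|_S$ on $S$. Then $P(\tilde{\mathfrak{o}})$ and $P(\mathfrak{o}')$ agree inside $S$ (both equal $P(\mathfrak{o}'|_S)$) and agree outside $S$ (both equal $P(\mathfrak{o}_x) \setminus S$, by the displayed identity), so $\mathfrak{o}' = \tilde{\mathfrak{o}} \in \mathfrak{O}_x$; in particular $S$ is $\mathfrak{o}'$-convex, as also follows directly from Proposition \ref{prop: still convex}(1). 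This yields $\mathfrak{o}' \in \mathfrak{O}_x \cap \overline{\mathcal{O}(\mathfrak{o})}$, as required.

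I expect the main obstacle to be the bookkeeping of the last paragraph rather than anything analytic: one must make precise that $\mathfrak{O}_x$ is \emph{exactly} the set of left-orders agreeing with $\mathfrak{o}_x$ off $S$, which means checking that the construction of Proposition \ref{prop: action yields lo} produces an exterior cone independent of the order chosen on $S$, and that coincidence of positive cones forces coincidence of orders. The genuinely substantive step, the convergence of the exterior cones $P(\mathfrak{o}_{t(g_n)}) \setminus S$, is already isolated in Lemma \ref{l: orders associated to ideal points 1}, so the remaining work is purely order-theoretic combined with the compactness of $LO(G)$.
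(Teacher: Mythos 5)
Your proposal is correct and follows essentially the same route as the paper: take $t(g_n)\to x$, extract a convergent subsequence $\mathfrak{o}_{t(g_n)}\to\mathfrak{o}'$ in the compact (metrisable, since $G$ is countable) space $LO(G)$, apply Lemma \ref{l: orders associated to ideal points 1} to get $P(\mathfrak{o}')\setminus \hbox{Stab}_{\rho_{\mathfrak{o}}}(x)=P(\mathfrak{o}_x)\setminus \hbox{Stab}_{\rho_{\mathfrak{o}}}(x)$, and conclude that $\mathfrak{o}'$ differs from $\mathfrak{o}_x$ only by the choice of order on the stabiliser, hence lies in $\mathfrak{O}_x\cap\overline{\mathcal{O}(\mathfrak{o})}$. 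Your final paragraph merely makes explicit the "convex swap" step that the paper states in one line, so the two arguments coincide.
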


\begin{proof}
Let $\{g_n\}$ be a sequence of elements of $G$ such that $\lim_n t(g_n) = x$. Since $LO(G)$ is a compact metric space we can choose a subsequence $\{g_{n_j}\}$ such that $\{\mathfrak{o}_{t(g_{n_j})} \}$ converges  to a left-order $\hat{\mathfrak{o}} \in LO(G)$. Then $\lim_j P(\mathfrak{o}_{t(g_{n_j})}) = P(\hat{\mathfrak{o}})$ and therefore Lemma \ref{l: orders associated to ideal points 1} implies that 
$$P(\hat{\mathfrak{o}}) \setminus \hbox{Stab}_{\rho_{\mathfrak{o}}}(x) = P(\mathfrak{o}_x) \setminus \hbox{Stab}_{\rho_{\mathfrak{o}}}(x)$$
It follows that $\hat{\mathfrak{o}}$ and $\mathfrak{o}_x$ differ by a convex swap on $\hbox{Stab}_{\rho_{\mathfrak{o}}}(x)$, so $\hat{\mathfrak{o}} \in \mathfrak{O}_x$. On the other hand, Lemma \ref{lemma: conjugate orders 1}(1) shows that $\hat{\mathfrak{o}} = \lim_j \mathfrak{o}_{t(g_{n_j})} = \lim_j  g_{n_j} \cdot  \mathfrak{o}$, so $\hat{\mathfrak{o}}  \in \overline{\mathcal{O}(\mathfrak{o})}$. 
\end{proof}

\section{$3$-manifolds, their groups, and left-orderability}
\label{sec: lo and 3mflds}

\subsection{Prime and JSJ decompositions} Recall that every compact, connected, orientable $3$-manifold $M$ other than $S^3$ can be expressed as a connected sum of prime manifolds
\[ M \cong M_1 \sharp \dots \sharp M_n,
\]
and correspondingly, $\pi_1(M) = \pi_1(M_1) * \dots * \pi_1(M_n)$. Assuming that $\partial M$ contains no $2$-sphere components, $\pi_1(M_i) \ne \{1\}$ for each $i$ and therefore the fundamental group of $M$ is left-orderable if and only if each $\pi_1(M_i)$ is left-orderable (\cite{V}).  Observe that if $M_i$ is reducible for some $i$, then $\pi_1(M_i) \cong \mathbb{Z}$ and so $\pi_1(M_i)$ is left-orderable in this case; therefore the question of left-ordering $\pi_1(M)$ for an arbitrary $3$-manifold $M$ reduces to considering only the fundamental groups of irreducible $3$-manifolds.  In this case, a key tool is the following result.

\begin{theorem}\cite[Theorem 3.2]{BRW}
\label{thm: brw}
Suppose that $M$ is a compact, connected, orientable, irreducible $3$-manifold.  Then $\pi_1(M)$ is left-orderable if and only if there exists a surjection $\pi_1(M) \rightarrow L$ onto a left-orderable group $L$.
\end{theorem}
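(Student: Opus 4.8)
The plan is to dispose of the forward direction in one line and concentrate all the work on the converse. If $\pi_1(M)$ is left-orderable, then the identity homomorphism $\pi_1(M) \to \pi_1(M) =: L$ is already the desired surjection onto a left-orderable group, so nothing needs to be proved. For the converse, suppose $\phi \colon \pi_1(M) \twoheadrightarrow L$ with $L$ a nontrivial left-orderable group. The engine of the argument will be the Burns--Hale criterion: a group $G$ is left-orderable provided that every nontrivial finitely generated subgroup $H \leq G$ surjects onto a nontrivial left-orderable group. Accordingly, I would fix an arbitrary nontrivial finitely generated subgroup $H \leq \pi_1(M)$ and produce such a quotient, splitting into two cases according to whether $H$ survives under $\phi$.

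If $\phi(H) \neq \{1\}$, then $\phi|_H \colon H \to \phi(H)$ surjects onto a nontrivial subgroup of $L$; since subgroups of left-orderable groups inherit a left-order, $\phi(H)$ is left-orderable and this case is finished. The substantive case is $H \leq \ker\phi$. Here I would pass to the covering space $p \colon M_H \to M$ corresponding to $H$, so that $\pi_1(M_H) \cong H$. Two preliminary reductions are needed. First, a nontrivial left-orderable group is torsion-free and hence infinite, so $\pi_1(M)$ is infinite and $[\pi_1(M) : \ker\phi] = |L| = \infty$; since $H \leq \ker\phi$, the cover $M_H$ has infinite degree over the compact manifold $M$ and is therefore noncompact. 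Second, covers of orientable irreducible $3$-manifolds are again orientable and irreducible (via the sphere theorem, using asphericity once $\pi_1$ is known to be infinite), so $M_H$ is irreducible.

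With these in hand I would invoke Scott's compact core theorem: since $H$ is finitely generated, $M_H$ contains a compact core $C$ with $\pi_1(C) \cong H$. Because $M_H$ is connected and noncompact, $C$ is a proper compact submanifold and hence has nonempty boundary. Using irreducibility, one arranges (by capping off with balls) that $\partial C$ has no $2$-sphere components unless $C$ is itself a ball; the latter would force $H = \{1\}$, which is excluded. Thus $\partial C$ has a component of positive genus, so $b_1(\partial C) > 0$, and the half-lives-half-dies lemma yields $b_1(C) > 0$. Consequently $H = \pi_1(C)$ surjects onto $H_1(C)$ and hence onto $\mathbb{Z}$, a nontrivial left-orderable group. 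This verifies the Burns--Hale hypothesis in both cases, so $\pi_1(M)$ is left-orderable.

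The step I expect to be the main obstacle is this second case, and specifically the topological bookkeeping that forces $b_1(C) > 0$: one must ensure the compact core can be chosen so that a $2$-sphere boundary component arises only for a ball, which in turn rests on the irreducibility (hence asphericity) of the infinite cover $M_H$. By contrast, the purely order-theoretic content --- the Burns--Hale reduction and the inheritance of orders by subgroups --- is routine. All of the difficulty is concentrated in the $3$-manifold topology that converts ``finitely generated subgroup of the kernel'' into ``positive first Betti number.''
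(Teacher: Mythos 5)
Your proposal is correct and takes essentially the same route as the proof behind the cited result: the paper itself does not reprove Theorem \ref{thm: brw} (it quotes \cite[Theorem 3.2]{BRW}), but the argument in \cite{BRW} --- which this paper echoes in Claim 4 of the proof of Proposition \ref{prop: not bdry-cofinal implies all slopes} --- proceeds exactly as yours does: the Burns--Hale criterion \cite{BH}, the case split on whether the finitely generated subgroup survives in $L$, and the Scott compact core plus half-lives-half-dies argument producing a surjection onto $\mathbb{Z}$ for finitely generated subgroups of the kernel (which have infinite index, hence noncompact irreducible covers). The points you flag as delicate (irreducibility of the infinite cover, capping off sphere boundary components of the core) are handled correctly and are the standard ingredients of that proof.
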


Note that as a corollary, if $M$ is a compact, connected orientable irreducible $3$-manifold with $\partial M$ a union of tori, an Euler characteristic argument shows that $b_1(M)>0$, so there exists a surjection $\pi_1(M) \rightarrow \mathbb{Z}$ and thus $\pi_1(M)$ is left-orderable \cite[Lemma 3.5]{BRW}.

We use this fact as follows.  If $M$ is orientable, irreducible and closed, the JSJ decomposition of $M$ provides a unique (up to isotopy) minimal collection $\mathcal{T}$ of embedded, disjoint incompressible tori such that $M\setminus \mathcal{T}$ consists of pieces $M_1, \dots, M_n$ where each $M_i$ is either Seifert fibered or atoroidal.  This decomposition allows one to realise $\pi_1(M)$ as the fundamental group of a graph of groups whose vertex groups are $\pi_1(M_1), \dots, \pi_1(M_n)$ and whose edge groups are $\pi_1(T) \cong \mathbb{Z} \oplus \mathbb{Z}$, where $T$ ranges over all tori in the collection $\mathcal{T}$.

By our observations above, if $M$ contains essential tori, then $\pi_1(M)$ is thus expressible as a fundamental group of a graph of groups, all of whose edge groups and vertex groups are left-orderable.  As no obstruction to left-orderability arises from considering these groups independently, the key to understanding left-orderability of $\pi_1(M)$ therefore lies in an analysis of the gluing maps used to reassemble $M$ from the pieces $M_i$, and the behaviour of the left-orderings of each $\pi_1(M_i)$ restricted to the components of $\partial M_i$ with respect to these gluing maps.  Such an analysis has already been completed in the case where all $M_i$ are Seifert fibred \cite{BC}. The analysis of the general case occupies much of the remainder of the manuscript.

\subsection{Slopes} 
A {\it slope} on the boundary of a knot manifold $M$ (cf. \S \ref{sec: intro} ) is an element $[\alpha]$ of the projective space of $H_1(\partial M; \mathbb R)$, where $\alpha \in H_1(\partial M; \mathbb R) \setminus \{0\}$. We use $\mathcal{S}(M)$ to denote the set of slopes on $\partial M$ topologised in the usual way, so that $\mathcal{S}(M) \cong S^1$. The subset $\mathcal{S}_{rat}(M)$ of {\it rational slopes} consists of those slopes represented by non-zero elements of $H_1(\partial M)$. Rational slopes can be identified as either 
\begin{itemize}

\item a $\pm$-pair of primitive elements of $H_1(\partial M) \equiv \pi_1(\partial M)$;

\vspace{.2cm} \item a $\partial M$-isotopy class of essential simple closed curves on $\partial M$.

\end{itemize} 
To each rational slope $[\alpha]$ on $\partial M$ or, more generally, on a torus boundary component $T$ of a $3$-manifold $W$, we can associate the $\alpha$-{\it Dehn filling} of $W$ given by $W(\alpha) = W \cup_f (S^1 \times D^2)$, where $f:\partial (S^1 \times D^2) \to T$ is a homeomorphism for which $f(\{*\} \times \partial D^2)$ is a simple closed curve of slope $[\alpha]$. A standard argument shows that $W(\alpha)$ is independent of the choice of $f$ up to a homeomorphism which is the identity on the complement in $W$ of a collar neighbourhood of $T$.

\section{Weak order-detection of slopes}

For the remainder of the paper we take $M$ to be a knot manifold unless otherwise indicated. That is, $M$ is a compact, connected, orientable, irreducible, boundary incompressible $3$-manifold such that $\partial M$ is a torus. As the first Betti number of a $M$ is at least $1$, Theorem \ref{thm: brw}  implies that $\pi_1(M)$ is left-orderable. 

We assume that the base points $x_0$ of our fundamental groups lie on $\partial M$, though we shall suppress them from our notation. For simplicity we denote the space of left-orders $LO(\pi_1(M))$  by $LO(M)$.

\label{sec: weak}

\subsection{Weak order-detection} 
\label{subsec: wk-ord-det}

If we think of $H_1(\partial M)$ as a lattice in $H_1(\partial M; \mathbb R) = H_1(\partial M) \otimes \mathbb R$, it is easy to see that every left-ordering $\mathfrak{o}$ of $\pi_1(M)$ determines a line in $H_1(\partial M; \mathbb R)$: The basic properties of positive cones imply that there is a line $L({\mathfrak{o}}) \subset H_1(\partial M; \mathbb R)$ uniquely determined by the fact that the elements of $H_1(\partial M) = \pi_1(\partial M)$ which lie to one side of it are $\mathfrak{o}$-positive and those lying to the other side are $\mathfrak{o}$-negative  (cf. \S \ref{subsec: lo z2}). We denote the slope of this line by $[L(\mathfrak{o})] \in \mathcal{S}(M)$, and it follows from a straightforward check that the slope function 
$$s: LO(M) \to \mathcal{S}(M) \cong S^1, s(\mathfrak{o}) = [L(\mathfrak{o})]$$        
is continuous \cite[Chapter 6]{Clay}.

\begin{definition}
{\rm A slope $[\alpha]$ on $\partial M$ is {\it weakly order-detected} if there is some $\mathfrak{o} \in LO(M)$ such that $[L(\mathfrak{o})] = [\alpha]$. } 
\end{definition}

Set 
$$\mathcal{D}_{ord}^{wk}(M) = \{[\alpha] \in \mathcal{S}(M) \; | \; [\alpha] \hbox{ is weakly order-detected}\}$$
and observe that as $\mathcal{D}_{ord}^{wk}(M)$ is the image of the compact space $LO(M)$ under the continuous map $s$, it is closed in $\mathcal{S}(M)$. 

\subsection{Weak order-detection and convexity}
It follows from the discussion in \S \ref{subsec: lo z2} that a rational slope $[\alpha]$ is weakly order-detected if and only if there is a left-order $\mathfrak{o} \in LO(M)$ such that $[L(\mathfrak{o})] \cap \pi_1(\partial M)$ is $\mathfrak{o}|_{\pi_1(\partial M)}$-convex. Our next result shows that we can define the weak order-detection of a rational slope in terms of relatively convex subgroups of $\pi_1(M)$. 

\begin{proposition}
\label{prop: wk ord-det means convex}
If a rational slope $[\alpha]$ is weakly order-detected, then there is a relatively convex subgroup $C$ of $\pi_1(M)$ such that $C \cap \pi_1(\partial M) = \langle \alpha \rangle$. Indeed, if $[\alpha] = [L(\mathfrak{o})]$ for some $\mathfrak{o} \in LO(M)$, there is a left-order $\mathfrak{o}' \in \overline{\mathcal{O}(\mathfrak{o})}$ and an $\mathfrak{o}' $-convex subgroup $C$ of $\pi_1(M)$ such that $C \cap \pi_1(\partial M) = \langle \alpha \rangle$.
\end{proposition}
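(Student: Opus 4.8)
The plan is to establish the refined ``Indeed'' assertion directly, since an $\mathfrak{o}'$-convex subgroup is in particular relatively convex (Definition \ref{def: rel convex}). Fix $\mathfrak{o} \in LO(M)$ with $[L(\mathfrak{o})] = [\alpha]$ and let $\rho_{\mathfrak{o}}: \pi_1(M) \to \mbox{Homeo}_+(\mathbb{R})$ be an associated dynamic realisation with tight embedding $t$. Because $[\alpha]$ is rational, $L(\mathfrak{o})$ has rational slope and $\langle \alpha \rangle = L(\mathfrak{o}) \cap \pi_1(\partial M)$ is a proper $\mathfrak{o}|_{\pi_1(\partial M)}$-convex subgroup of $\pi_1(\partial M)$ (see \S \ref{subsec: lo z2}). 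The strategy is to locate an ideal point $x$ of $\mathfrak{o}$ whose stabiliser satisfies $\mbox{Stab}_{\rho_{\mathfrak{o}}}(x) \cap \pi_1(\partial M) = \langle \alpha \rangle$. Granting this, Proposition \ref{prop: relation to orbit}(2) supplies an order $\mathfrak{o}' \in \mathfrak{O}_x \cap \overline{\mathcal{O}(\mathfrak{o})}$, and since every order in $\mathfrak{O}_x$ makes $\mbox{Stab}_{\rho_{\mathfrak{o}}}(x)$ convex (Definition \ref{def: O_x} and the discussion following it), the subgroup $C := \mbox{Stab}_{\rho_{\mathfrak{o}}}(x)$ is $\mathfrak{o}'$-convex and meets $\pi_1(\partial M)$ exactly in $\langle \alpha \rangle$, which is what is claimed.

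To build the point $x$, I would exploit the boundedness of $\langle \alpha \rangle$ inside $\pi_1(\partial M)$. Choose a positive element $\gamma \in \pi_1(\partial M) \setminus \langle \alpha \rangle$; such $\gamma$ exists because $\pi_1(\partial M)/\langle \alpha \rangle \cong \mathbb{Z}$. Boundedness of proper convex subgroups (\S \ref{subsec: convex subgroups}) gives $\langle \alpha \rangle \subseteq [\gamma^{-1}, \gamma]$, so the orbit $\{t(\alpha^n)\}_{n \in \mathbb{Z}}$ lies in the bounded interval $[t(\gamma^{-1}), t(\gamma)]$. Set $x := \sup_n t(\alpha^n)$, which is therefore finite. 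Since $\rho_{\mathfrak{o}}(\alpha)$ is an increasing homeomorphism carrying the set $\{t(\alpha^n)\}$ onto itself, it preserves the supremum of that set, so $\rho_{\mathfrak{o}}(\alpha)(x) = x$ and hence $\langle \alpha \rangle \subseteq \mbox{Stab}_{\rho_{\mathfrak{o}}}(x)$. As $x$ is a limit of points of $t(\pi_1(M))$ and is fixed by the nontrivial element $\alpha$, Lemma \ref{l: stabilisers} forces $x$ to be an ideal point of $\mathfrak{o}$; in particular $x \notin t(\pi_1(M))$, so $x > 0 = t(1)$.

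The main obstacle, which I expect to be the heart of the argument, is the reverse containment $\mbox{Stab}_{\rho_{\mathfrak{o}}}(x) \cap \pi_1(\partial M) \subseteq \langle \alpha \rangle$. Suppose $\delta \in \pi_1(\partial M) \setminus \langle \alpha \rangle$ fixes $x$; replacing $\delta$ by $\delta^{-1}$ if necessary, assume $\delta >_{\mathfrak{o}} 1$. The same boundedness argument gives $\langle \alpha \rangle \subseteq [\delta^{-1}, \delta]$, so $t(\alpha^n) < t(\delta)$ for every $n$ (equality is impossible as $\delta \notin \langle \alpha \rangle$ and $t$ is injective), whence $x = \sup_n t(\alpha^n) \le t(\delta)$; since $x$ is an ideal point while $t(\delta) \in t(\pi_1(M))$, in fact $x < t(\delta) = \rho_{\mathfrak{o}}(\delta)(0)$. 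On the other hand $\rho_{\mathfrak{o}}(\delta)$ is increasing and fixes $x$, and $0 < x$, so $\rho_{\mathfrak{o}}(\delta)(0) < \rho_{\mathfrak{o}}(\delta)(x) = x$, contradicting $x < \rho_{\mathfrak{o}}(\delta)(0)$. Thus no such $\delta$ exists and $\mbox{Stab}_{\rho_{\mathfrak{o}}}(x) \cap \pi_1(\partial M) = \langle \alpha \rangle$, which completes the construction of the desired ideal point and, via the reduction in the first paragraph, the proof.
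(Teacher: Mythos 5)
Your proof is correct and follows essentially the same route as the paper's: both pass to the dynamic realisation, take the ideal point $x = \sup_n t(\alpha^n)$ (the paper's $x_1$), verify that $\mbox{Stab}_{\rho_{\mathfrak{o}}}(x) \cap \pi_1(\partial M) = \langle \alpha \rangle$, and then obtain $\mathfrak{o}' \in \mathfrak{O}_x \cap \overline{\mathcal{O}(\mathfrak{o})}$ via Lemma \ref{lemma: orders associated to ideal points 2}, with $C = \mbox{Stab}_{\rho_{\mathfrak{o}}}(x)$. The only cosmetic difference is how peripheral elements outside $\langle \alpha \rangle$ are excluded from the stabiliser: the paper shows $\rho_{\mathfrak{o}}(\beta)$ pushes the whole interval $[x_0,x_1]$ strictly to one side, while you derive the direct contradiction $\rho_{\mathfrak{o}}(\delta)(0) < x < \rho_{\mathfrak{o}}(\delta)(0)$ by comparing $t(\delta)$ with $x$.
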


\begin{proof}
Suppose that $[\alpha]$ is weakly order-detected by $\mathfrak{o} \in LO(M)$ and let $t: \pi_1(M) \to \mathbb R$ be an $\mathfrak{o}$-tight order-preserving embedding and $\rho_{\mathfrak{o}}: \pi_1(M) \to \hbox{Homeo}_+(\mathbb R)$ the associated dynamical realisation of $\mathfrak{o}$, as constructed in \S \ref{subsec: dyn real}. Since $[\alpha]$ is $\mathfrak{o}$-detected, $\langle \alpha \rangle$ is $\mathfrak{o}|_{\pi_1(\partial M)}$-convex. Hence it is $\mathfrak{o}|_{\pi_1(\partial M)}$-bounded (above and below) in $\pi_1(\partial M)$ (cf. \S \ref{subsec: convex subgroups}), and therefore $\mathfrak{o}$-bounded in $\pi_1(M)$. Thus the closure of the convex hull of $t(\langle \alpha \rangle)$ is a closed interval $[x_0, x_1]$, where $x_0 < 0 < x_1$. It is clear that $x_1 \in \hbox{Fix}_{\rho_{\mathfrak{o}}}(\alpha)$, and so it follows from Lemma \ref{l: stabilisers} that $x_1$ is an ideal point of $\mathfrak{o}$. Thus, $[x_0, x_1] \cap t(\pi_1(\partial M)) = t(\langle \alpha \rangle)$. If $\beta \in \pi_1(\partial M) \setminus \langle \alpha \rangle$, the $\mathfrak{o}|_{\pi_1(\partial M)}$-convexity of $\langle \alpha \rangle$ implies that for all integers $n, m$ 
\begin{itemize}

\item $\beta \alpha^n <_\mathfrak{o} \alpha^m$ when $\beta <_\mathfrak{o} 1$, and 

\vspace{.2cm} \item  $\alpha^m <_\mathfrak{o} \beta \alpha^n$ when $1 <_\mathfrak{o} \beta$.

\end{itemize}
Thus either $\rho(\beta)([x_0, x_1]) \subset (-\infty, x_0]$ or $\rho(\beta)([x_0, x_1]) \subset [x_1, \infty)$. In either case $\rho(\beta)(x_1) = x_1$ is impossible, so $\hbox{Stab}_{\rho_{\mathfrak{o}}}(x_1) \cap \pi_1(\partial M) = \langle \alpha \rangle$. 

Recall the family $\mathfrak{O}_{x_1}$ of left-orders associated to $\mathfrak{o}$ and its ideal point $x_1$, as defined in \S \ref{subsec: left-orders from dyn reals}. By construction, $C = \hbox{Stab}_{\rho_{\mathfrak{o}}}(x_1)$ is convex with respect to each left-order in $\mathfrak{O}_{x_1}$. Further, by Lemma \ref{lemma: orders associated to ideal points 2} there is a left-order $\mathfrak{o}' \in \overline{\mathcal{O}(\mathfrak{o})} \cap \mathfrak{O}_{x_1}$. Then $C$ is $\mathfrak{o}'$-convex and $C \cap \pi_1(\partial M) = \langle \alpha \rangle$, which completes the proof. 
\end{proof}

The set of weakly order-detected slopes can be proper in $\mathcal{S}(M)$, for instance this is often the case when $M$ is Seifert fibred, but in this case $\pi_1(\partial M)$ is never contained in a relatively convex proper subgroup of $\pi_1(M)$, as the following proposition will show. 

\begin{proposition} 
\label{prop: not bdry-cofinal implies all slopes}
Suppose that $\mathfrak{o}$ is a left-order on $\pi_1(M)$ for which $\pi_1(\partial M)$ is contained in a proper $\mathfrak{o}$-convex subgroup $C$ of $\pi_1(M)$. Then $\mathcal{D}_{ord}^{wk}(M) = \mathcal{S}(M)$. More precisely, there is a family $\mathcal{F}$ of left-orders on $\pi_1(M)$ whose positive cones differ only on $C$ such that for each slope $\alpha$ on $\partial M$ there is a left-order $\mathfrak{o}_\alpha \in \mathcal{F}$ on $\pi_1(M)$ which weakly detects $[\alpha]$. 
\end{proposition}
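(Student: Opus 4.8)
The plan is to reduce the statement to a question purely about left-orders on $C$, and then to realize every slope by choosing such orders appropriately. Since $\pi_1(\partial M) \subseteq C$ and $C$ is $\mathfrak{o}$-convex, Proposition \ref{prop: still convex}(2) shows that for every $\mathfrak{o}_0 \in LO(C)$ the set $P(\mathfrak{o}_0) \sqcup (P(\mathfrak{o}) \setminus C)$ is the positive cone of a left-order $\mathfrak{o}_{\mathfrak{o}_0}$ on $\pi_1(M)$ that differs from $\mathfrak{o}$ by a convex swap on $C$. I would take $\mathcal{F} = \{\mathfrak{o}_{\mathfrak{o}_0} : \mathfrak{o}_0 \in LO(C)\}$, whose positive cones by construction differ only inside $C$. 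The key observation is that, because $\pi_1(\partial M) \subseteq C$, the restriction of $\mathfrak{o}_{\mathfrak{o}_0}$ to $\pi_1(\partial M)$ coincides with $\mathfrak{o}_0|_{\pi_1(\partial M)}$, so the slope weakly detected by $\mathfrak{o}_{\mathfrak{o}_0}$ is exactly $[L(\mathfrak{o}_0|_{\pi_1(\partial M)})]$. Thus the proposition follows once I show that $\mathfrak{o}_0 \mapsto [L(\mathfrak{o}_0|_{\pi_1(\partial M)})]$ maps $LO(C)$ onto $\mathcal{S}(M)$; equivalently, that for every slope $[\gamma]$ there is a left-order on $C$ whose restriction to $\pi_1(\partial M)$ realizes the line of slope $[\gamma]$ in the sense of \S\ref{subsec: lo z2}.

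To produce these orders I would first shrink $C$ to the smallest $\mathfrak{o}$-convex subgroup $C_0$ containing $\pi_1(\partial M)$; this is the $\mathfrak{o}$-convex hull of $\pi_1(\partial M)$, is itself a subgroup in which $\pi_1(\partial M)$ is $\mathfrak{o}$-cofinal, and is still proper, so it suffices to realize all slopes using $LO(C_0)$ (any such order convex-swaps up to $C$ and then to $\pi_1(M)$). Fixing a target slope $[\gamma]$, I would choose via \S\ref{subsec: lo z2} an order $\tau$ on $\pi_1(\partial M) \cong \mathbb{Z}^2$ with $L(\tau)$ of slope $[\gamma]$, and let $\sigma$ be a dynamic realization of $(\pi_1(\partial M),\tau)$. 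Passing to the dynamic realization $\rho_{\mathfrak{o}}$ of $(\pi_1(M), \mathfrak{o})$, the subgroup $C_0$ stabilizes the interval $I$ that is the convex hull of $t(C_0)$, inside which the $\pi_1(\partial M)$-orbits are cofinal. The construction then replaces the internal $C_0$-action on $I$ by one whose restriction to $\pi_1(\partial M)$ agrees up to conjugacy with $\sigma$; reading off the induced left-order on $C_0$ via Proposition \ref{prop: action yields lo} would then yield $\mathfrak{o}_0 \in LO(C_0)$ with $\mathfrak{o}_0|_{\pi_1(\partial M)}$ of slope $[\gamma]$, as required.

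The main obstacle is precisely the existence of such an $\mathfrak{o}_0$ restricting to the prescribed $\tau$: the restriction map $LO(C_0) \to LO(\pi_1(\partial M))$ need not be surjective for an abstract overgroup, and surjectivity onto slopes can genuinely fail when $\pi_1(\partial M)$ is ``trapped'' by a Tararin-type system of convex subgroups of $C_0$. Indeed, for the index-two $\mathbb{Z}^2$ inside a Klein bottle group one peripheral direction is bounded in \emph{every} left-order, so not all slopes arise; note, however, that this configuration is excluded here because the relevant subgroup is not a \emph{proper} convex subgroup. The heart of the proof is therefore to rule out these degenerate configurations, and I would do so by exploiting that $C_0$ is a proper convex subgroup of the knot manifold group $\pi_1(M)$ together with the peripheral structure of $\pi_1(\partial M)$ (its cofinality in $C_0$, and the fact that a proper $C_0 \subsetneq \pi_1(M)$ cannot carry $\pi_1(\partial M)$ as the boundary of an embedded twisted $I$-bundle over the Klein bottle). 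This is what guarantees the $C_0$-action on $I$ can be modified to realize an arbitrary separating line for $\pi_1(\partial M)$. Once every slope is produced in this way, the family $\mathcal{F}$ detects all of $\mathcal{S}(M)$ and hence $\mathcal{D}_{ord}^{wk}(M) = \mathcal{S}(M)$.
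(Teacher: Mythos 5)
Your reduction is fine and matches the paper's: by Proposition \ref{prop: still convex}(2), any $\mathfrak{o}_0 \in LO(C)$ yields the left-order with positive cone $P(\mathfrak{o}_0) \sqcup (P(\mathfrak{o}) \setminus C)$ on $\pi_1(M)$, and since $\pi_1(\partial M) \subseteq C$ this order restricts on $\pi_1(\partial M)$ to $\mathfrak{o}_0|_{\pi_1(\partial M)}$; so everything hinges on realizing every slope (or a dense set of slopes) by restrictions of left-orders on $C$. But that realization step --- which you correctly identify as the heart of the proof --- is exactly where your argument has a genuine gap. Your proposed resolution is to modify the $C_0$-action on the interval $I$ so that its restriction to $\pi_1(\partial M)$ becomes a prescribed $\mathbb{Z}^2$-action $\sigma$, and to justify this by ``ruling out'' Tararin/Klein-bottle configurations using properness of $C_0$. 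This cannot work as stated, for two reasons. First, extending a prescribed $\mathbb{Z}^2$-action on $I$ to an action of the overgroup $C_0$ is a genuine extension problem with no general solution; nothing in your sketch addresses it. Second, and more fundamentally, the obstruction is not confined to ``degenerate'' configurations: torus knot exteriors already give torsion-free groups $G \supset \mathbb{Z}^2$ with $\mathbb{Z}^2$ of infinite index, not trapped in any Tararin system, for which the slopes realized by restrictions of left-orders form a \emph{proper} subset of the circle of slopes (this is precisely why $\mathcal{D}_{ord}^{wk} \neq \mathcal{S}$ can happen). So no soft group-theoretic or dynamical argument of the kind you outline can establish surjectivity; some special property of $C$ must be used. (Your parenthetical exclusion of the Klein bottle case is also faulty: properness of $C$, i.e.\ infinite index in $\pi_1(M)$, does not prevent $\pi_1(\partial M)$ from having index two in $C$ or in $C_0$.)

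The missing idea is that $C$ is not an abstract overgroup of $\mathbb{Z}^2$: it is $\pi_1(W)$ for the \emph{non-compact} cover $W \to M$ corresponding to $C$, with $\partial M$ lifting to a torus $T \subseteq \partial W$, and non-compactness is what makes the realization possible. The paper shows, via a homological argument (Hatcher's half-lives-half-dies adapted to $W$) and Gordon--Luecke, that outside a nowhere dense set $Z^*$ of rational slopes, the Dehn filling $W(\alpha)$ is irreducible, non-compact, has nontrivial torsion-free fundamental group in which $\pi_1(T)$ maps onto an infinite cyclic group, and is locally indicable hence left-orderable (Burns--Hale plus the Howie--Short Betti number argument). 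The exact sequence $1 \to \langle\langle \alpha \rangle\rangle_C \to C \to \pi_1(W(\alpha)) \to 1$ then produces a left-order on $C$ in which $\langle\langle \alpha \rangle\rangle_C$ is convex and meets $\pi_1(\partial M)$ exactly in $\langle \alpha \rangle$, so its restriction detects $[\alpha]$; a convex swap pushes this to $\pi_1(M)$. Finally, since these orders realize a dense set of slopes and the slope map $s: LO(M) \to \mathcal{S}(M)$ is continuous on the compact space $LO(M)$, the image is closed and hence all of $\mathcal{S}(M)$ --- note the paper never realizes irrational slopes directly, whereas your plan tries to and would need this closure argument anyway. A smaller, separate issue: your claim that $\pi_1(\partial M)$ is cofinal in the smallest convex subgroup $C_0$ containing it is unjustified (the $\mathfrak{o}$-convex hull of a subgroup need not be a subgroup for a left-order, so minimality of $C_0$ does not immediately give cofinality), but this is minor compared to the main gap.
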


\begin{proof}
Since $C$ has infinite index in $\pi_1(M)$, the cover $W \to M$ such that $\pi_1(W) = C$ is non-compact. By construction, $\partial M$ lifts to a torus $T \subseteq \partial W$. We divide the proof into a series of claims. 

{\bf Claim 1.}  {\it The set $Z$ of rational slopes $\beta$ on $\partial M = T$ such that the image of the homomorphism $H_1(T) \to H_1(W(\beta))$ is zero is a discrete subset of $\mathcal{S}(M)$.}

\begin{proof}[Proof of Claim 1]
Consider the inclusion $i: T \to W$ and suppose that the homomorphism $H_1(T; \mathbb Q) \to H_1(W; \mathbb Q)$ it induces is zero. Then there is a compact $W_0 \subset W$ containing $T$ such that $H_1(T; \mathbb Q) \to H_1(W_0; \mathbb Q)$ is zero. But in this case we can attach compact $3$-manifolds to $W_0$ along $\partial W_0 \setminus T$ to create a compact, orientable $3$-manifold $W_1$ with boundary $T$ for which $H_1(T; \mathbb Q) \to H_1(W_1; \mathbb Q)$ is zero, which is impossible (\cite[Lemma 3.5]{Hat}). Thus the image of $H_1(T; \mathbb Q) \to H_1(W; \mathbb Q)$ is at least $\mathbb Q$.

If the image of $H_1(T; \mathbb Q) \to H_1(W; \mathbb Q)$ is $\mathbb Q^2$, then for each primitive $\beta \in H_1(T)$, the image of $H_1(T; \mathbb Q)$ in $H_1(W(\beta); \mathbb Q)$ has dimension $1$, so that $Z  = \emptyset$ and the claim holds. 

If the image of $H_1(T; \mathbb Q) \to H_1(W; \mathbb Q)$ is $\mathbb Q$, the kernel of $H_1(T) \to H_1(W; \mathbb Q)$ is a summand of rank $1$. Fix a primitive element $\gamma_0 \in H_1(T)$ spanning this kernel and let $\gamma_1 \in H_1(T)$ be a primitive element dual to $\gamma_0$. Then if $i_*: H_1(T) \to H_1(W)$ is the inclusion-induced homomorphism, $i_*(\gamma_1)$ has infinite order in $H_1(W)$. It follows that for $\beta$ a primitive element of $H_1(T)$, the order of $\gamma_1$ in $H_1(W(\beta))$ is a non-zero multiple of $\Delta(\beta, \gamma_0) = |\beta \cdot \gamma_0|$. In particular, $H_1(W(\beta))$ is non-trivial if $\Delta(\beta, \gamma_0) \ne 1$. It follows that $Z$ is contained in $\{[\gamma_1 + n \gamma_0] \; | \; n \in \mathbb Z\}$, which is a discrete subset of $\mathcal{S}(M)$.  
\end{proof} 

Let $Z^*$ be the union of $Z$ and the set of rational slopes $[\alpha]$ on $T$ such that $W(\alpha)$ is reducible. Gordon and Luecke have shown that there are at most three rational slopes of the latter type (\cite{GLu}), so $Z^*$ is a nowhere dense subset of $\mathcal{S}(M)$. Hence $\mathcal{S}_{rat}(T) \setminus Z^*$ is dense in $\mathcal{S}(M)$.

 {\bf Claim 2.} {\it If $[\alpha]$ is a rational slope on $T$ not contained in $Z^*$, then $\pi_1(W(\alpha))$ is infinite and torsion-free.} 

\begin{proof}[Proof of Claim 2]
Since $[\alpha] \not \in Z^*$, $W(\alpha)$ is irreducible. Further, Claim 1 shows that $\pi_1(W(\alpha)) \ne 1$, and so as an irreducible, non-compact $3$-manifold has a torsion-free fundamental group, the conclusion of Claim 2 holds (\cite[(C.1) and (C.2), page 37]{AFW}). 
\end{proof} 

{\bf Claim 3.} {\it If $[\alpha]$ is a rational slope on $T$ not contained in $Z^*$, then the image of $\pi_1(T)$ in $\pi_1(W(\alpha))$ is infinite cyclic.} 

\begin{proof}[Proof of Claim 3]
The  inclusion-induced homomorphism $\pi_1(T) \to \pi_1(W(\alpha))$ factors through $\pi_1(T)/\langle \alpha \rangle \cong \mathbb Z$, so its image is a cyclic group. On the other hand, we saw in the proof of Claim 1 that the image $H_1(T) \to H_1(W(\alpha))$ is non-zero when $[\alpha] \not \in Z$, so Claim 2 shows that the image of $\pi_1(T) $ in $\pi_1(W(\alpha))$ is infinite cyclic. 
\end{proof} 

{\bf Claim 4.} {\it If $[\alpha]$ is a rational slope on $T$ not contained in $Z^*$, then $\pi_1(W(\alpha))$ is left-orderable.}

\begin{proof}[Proof of Claim 4] 
We will show that $\pi_1(W(\alpha))$ is locally indicable, hence left-orderable by the Burns-Hale theorem \cite{BH}. To that end let $H$ be a non-trivial finitely generated subgroup of $\pi_1(W(\alpha))$ and $V \to W(\alpha)$  the associated cover. 

If $V \to W(\alpha)$ is a finite cover, then $\pi_1(W(\alpha))$ is finitely generated. We claim that at least one boundary component of a compact core $N$ of the non-compact manifold $W(\alpha)$ has genus $1$ or more. Otherwise each is a $2$-sphere, so the irreducibility of $W(\alpha)$ couples with the non-triviality of its fundamental group to show that each boundary component of $N$ bounds a $3$-ball contained in $W(\alpha) \setminus \mbox{int}(N)$, which is impossible as it would imply that $W(\alpha)$ is compact. It follows that the first Betti number of $N$, and hence $W(\alpha)$, is positive (\cite[Lemma 3.5]{BRW}), so the same is true of any of its finite degree covers. In particular there is a surjection $H = \pi_1(V) \to \mathbb Z$. 

On the other hand, if $V \to W(\alpha)$ is an infinite degree cover, an argument of Howie and Short shows that the first Betti number of $V$ is positive (cf. the proof of \cite[ Theorem 1.1]{BRW}). Thus $\pi_1(W(\alpha))$ is locally indicable. 
\end{proof} 
Now we can complete the proof of the proposition. 

Given $\alpha \in \mathcal{S}_{rat}(T) \setminus Z^*$, the exact sequence $1 \to \langle \langle \alpha \rangle \rangle_C \to C \to C/\langle \langle \alpha \rangle \rangle_C = \pi_1(W(\alpha)) \to 1$ determines a left-order $\mathfrak{o}_C$ on $C$ for which $\langle \langle \alpha \rangle \rangle_C$ is a proper convex subgroup. Claim 3 shows that  $\langle \langle \alpha \rangle \rangle_C \cap \pi_1(T) = \langle \alpha \rangle$, and therefore $\langle \alpha \rangle$ is a proper $\mathfrak{o}_C|_{\pi_1(\partial M)}$-convex subgroup of $\pi_1(\partial M) \leq C$. Proposition \ref{prop: still convex} then shows that $P(\mathfrak{o}_C) \sqcup \big(P(\mathfrak{o}) \setminus C\big)$ is the positive cone of a left-order on $\pi_1(M)$ which weakly order-detects $[\alpha]$. 
It follows that $\mathcal{S}_{rat}(T) \setminus Z^*$ is contained in the image of the map $LO(M) \rightarrow \mathcal{S}(M), \mathfrak{o} \mapsto [L(\mathfrak{o})]$.  Since this map is continuous and $LO(M)$ is compact, density of $\mathcal{S}_{rat}(T) \setminus Z^*$ in $\mathcal{S}(M)$ yields the result.
\end{proof} 

Our next two corollaries show how one can use epimorphisms and bi-orders to satisfy the convexity condition required in Proposition \ref{prop: not bdry-cofinal implies all slopes}.

\begin{corollary} \label{cor: all weak}
Suppose that $\varphi: \pi_1(M) \to G$ is an epimorphism. Then $\mathcal{D}_{ord}^{wk}(M) = \mathcal{S}(M)$ if either of the following two conditions holds: 

$(1)$ $\pi_1(\partial M) \leq \hbox{kernel}(\varphi)$ and $G$ is left-orderable; 

$(2)$ $\pi_1(\partial M) \cap \hbox{kernel}(\varphi) = \{1\}$ and $G$ is bi-orderable.

\end{corollary}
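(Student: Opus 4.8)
The plan is to verify, in each case, the hypothesis of Proposition \ref{prop: not bdry-cofinal implies all slopes}: I will exhibit a single left-order $\mathfrak{o} \in LO(M)$ for which $\pi_1(\partial M)$ is contained in a proper $\mathfrak{o}$-convex subgroup of $\pi_1(M)$. Both cases will be handled uniformly by letting $\pi_1(M)$ act, through $\varphi$, on $G$ regarded as a totally ordered set, and then applying Proposition \ref{prop: action yields lo} at a suitably chosen point; the (convex) stabiliser it produces will be the desired subgroup. What differs between the two cases is which order-preserving action of $G$ on itself is used, and this is exactly where the two distinct hypotheses on $G$ enter.

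For part $(1)$ I would fix a left-order $\mathfrak{o}_G$ on $G$ and let $G$ act on $(G, <_{\mathfrak{o}_G})$ by left translation. Since left multiplication preserves $\mathfrak{o}_G$, composing with $\varphi$ gives an order-preserving action of $\pi_1(M)$ on the ordered set $G$. The stabiliser of the point $1 \in G$ is $\varphi^{-1}(1) = \ker \varphi$, which contains $\pi_1(\partial M)$ by hypothesis and is proper because $G \neq \{1\}$. As $\ker\varphi$ is a nontrivial subgroup of the left-orderable group $\pi_1(M)$, it carries a left-order, so Proposition \ref{prop: action yields lo} yields a left-order on $\pi_1(M)$ in which $\ker\varphi$ is convex, and Proposition \ref{prop: not bdry-cofinal implies all slopes} then gives $\mathcal{D}_{ord}^{wk}(M) = \mathcal{S}(M)$.

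For part $(2)$ the hypothesis $\pi_1(\partial M) \cap \ker\varphi = \{1\}$ makes $\varphi$ injective on $\pi_1(\partial M)$, so $A := \varphi(\pi_1(\partial M)) \cong \mathbb{Z}^2$. This time I would fix a \emph{bi}-order $\mathfrak{o}_G$ on $G$ and let $G$ act on $(G, <_{\mathfrak{o}_G})$ by conjugation; it is precisely the bi-invariance of $\mathfrak{o}_G$ that makes conjugation order-preserving, so composing with $\varphi$ again gives an order-preserving action of $\pi_1(M)$ on the ordered set $G$. For a point $x_0 \in G$ the stabiliser is $\varphi^{-1}(C_G(x_0))$, which contains $\pi_1(\partial M)$ as soon as $A$ centralises $x_0$ (i.e. $x_0 \in C_G(A)$) and is proper as soon as $x_0 \notin Z(G)$. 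I would then choose $x_0 \in C_G(A) \setminus Z(G)$ and conclude exactly as in part $(1)$ via Propositions \ref{prop: action yields lo} and \ref{prop: not bdry-cofinal implies all slopes}.

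The one point that must be checked --- and the crux of part $(2)$ --- is that $C_G(A) \setminus Z(G)$ is non-empty. Since $A$ is abelian we always have $A \subseteq C_G(A)$, so it suffices to rule out $C_G(A) = Z(G)$; but that equality forces $A \subseteq Z(G)$ and hence $C_G(A) = G = Z(G)$, i.e. $G$ abelian. I would exclude this by a first-Betti-number (half-lives-half-dies) argument: if $G$ were abelian then $\varphi$ would factor through $H_1(M)$, so $A$ would be a quotient of $\mathrm{im}(H_1(\partial M;\mathbb{Q}) \to H_1(M;\mathbb{Q}))$, a group of rank $1$, contradicting $A \cong \mathbb{Z}^2$. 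Hence $G$ is non-abelian, $C_G(A) \supsetneq Z(G)$, and the required $x_0$ exists. The conceptual point to highlight is that bi-orderability (rather than mere left-orderability) of $G$ is genuinely forced in $(2)$: it is exactly what makes the conjugation action order-preserving, whereas in $(1)$ the left-translation action only requires $\mathfrak{o}_G$ to be a left-order.
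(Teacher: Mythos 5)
Your proof is correct. Part (1) is exactly the paper's argument: the left-translation action of $\pi_1(M)$ on a left-ordered $G$ via $\varphi$ exhibits $\ker\varphi$ as a proper relatively convex subgroup containing $\pi_1(\partial M)$, and Proposition \ref{prop: not bdry-cofinal implies all slopes} finishes.

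For part (2) your core mechanism is also the paper's --- the conjugation action of $\pi_1(M)$ on $(G, \mathfrak{o}_G)$, order-preserving precisely because $\mathfrak{o}_G$ is a bi-order, fed into Propositions \ref{prop: action yields lo} and \ref{prop: not bdry-cofinal implies all slopes} --- but your case organisation is genuinely different and somewhat cleaner. The paper splits into (i) $G$ abelian, handled by Rhemtulla's theorem \cite[Theorem 4]{Rhe} that every bi-order of $\varphi(\pi_1(\partial M)) \cong \mathbb{Z}^2$ extends to a bi-order of $G$, and (ii) $G$ nonabelian, itself subdivided according to whether the centraliser $Z_{\varphi(\alpha_i)}$ of some generator's image is proper; the residual subcase $\varphi(\pi_1(\partial M)) \leq Z(G)$ requires the fact that the centre of a bi-orderable group is relatively convex, \cite[Theorem 2.2.4]{MR}. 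You instead observe that case (i) is vacuous for a knot manifold: half-lives-half-dies forces the image of $\pi_1(\partial M)$ in any abelian quotient of $\pi_1(M)$ to be a quotient of the rank-one group $\mathrm{im}(H_1(\partial M) \to H_1(M))$, contradicting $A \cong \mathbb{Z}^2$ (strictly speaking $A$ is a quotient of the \emph{integral} image rather than the rational one, but the rank bound is the same). Your single choice of $x_0 \in C_G(A) \setminus Z(G)$, nonempty exactly because $G$ is nonabelian, then absorbs both of the paper's nonabelian subcases: when some $\varphi(\alpha_i) \notin Z(G)$ one may take $x_0 = \varphi(\alpha_i)$, and when $A \leq Z(G)$ one has $C_G(A) = G$, so any noncentral $x_0$ works. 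The net effect is a proof needing neither \cite{Rhe} nor \cite{MR}, at the cost of invoking a $3$-manifold fact where the paper's argument is purely order-theoretic given the setup; your route also yields the extra observation that hypothesis (2) forces $G$ to be nonabelian, so the paper's abelian case never actually arises.
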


\begin{proof}
In each case it suffices to show that $\pi_1(\partial M)$ is contained in a relatively convex proper subgroup of $\pi_1(M)$ by Proposition \ref{prop: not bdry-cofinal implies all slopes}. 

In the case of assertion (1) of the corollary, the left-orderability of $\pi_1(M)$ and that of $G$ combines with Proposition \ref{prop: action yields lo} to show that $\hbox{kernel}(\varphi)$ is relatively convex, so the claim follows. 

For (2), first note that if $G$ is abelian then every bi-order of $\varphi(\pi_1(\partial M)) \cong \mathbb{Z} \oplus \mathbb{Z}$ extends to a bi-order of $G$ \cite[Theorem 4]{Rhe}, so every slope is weakly detected by applying Proposition \ref{prop: action yields lo}. 

On the other hand, if $G$ is nonabelian then fix a left-order on $\pi_1(M)$ and let $\mathfrak{o}$ be a bi-order on $G$. If $Z_g$ denotes the centraliser of any element $g \in G$, then $\varphi^{-1}(Z_g)$ is the stabiliser in $\pi_1(M)$ of $g$ under the $\mathfrak{o}$-preserving action $g \mapsto \varphi(\gamma)g\varphi(\gamma)^{-1}$ of $\pi_1(M)$ on $G$. Since $\pi_1(M)$ is left-orderable so is the subgroup $\varphi^{-1}(Z_g)$, and therefore Proposition \ref{prop: action yields lo} produces a left-order $\mathfrak{o}_g$ on $\pi_1(M)$ for which $\varphi^{-1}(Z_g)$ is convex. 

Using this construction we proceed as follows.  Fix non-identity elements $\alpha_0, \alpha_ 1 \in \pi_1(\partial M)$ that generate $\pi_1(\partial M)$, and suppose that $Z_{\varphi(\alpha_i)} \ne G$ for some $i \in \{0, 1\}$. Then $\varphi^{-1}( Z_{\varphi(\alpha_i)}) \ne \pi_1(M)$ is a proper $\mathfrak{o}_{\alpha_i}$-convex subgroup of  $\pi_1(M)$ which contains $\pi_1(\partial M)$.   On the other hand, if $Z_{\varphi(\alpha_i)} = G$ for $i = 0,1$ then $\varphi( \pi_1(\partial M))$ is contained in the centre $Z(G)$ of $G$, which is a proper subgroup since $G$ is nonabelian.  Now since $G$ is bi-orderable, $Z(G)$ is relatively convex subgroup of $G$ by \cite[Theorem 2.2.4]{MR} and thus $\varphi^{-1}(Z(G))$ is a proper, relatively convex subgroup of $\pi_1(M)$ containing $\pi_1(\partial M)$.
\end{proof}

Recall that if $M$ is a knot manifold and $H_1(M; \mathbb{Q}) = \mathbb{Q}$, then a rational longitude of $M$ is a primitive element $\lambda_M \in H_1(\partial M)$ such that $\lambda_M$ has finite order in $H_1(M)$.

\begin{corollary}
Suppose that $H_1(M; \mathbb{Q}) = \mathbb{Q}$, $G$ is a bi-orderable group and $\varphi: \pi_1(M) \to G$ is an epimorphism, and that $\varphi(\lambda_M) \neq 1$.  Then $\mathcal{D}_{ord}^{wk}(M) = \mathcal{S}(M)$.
\end{corollary}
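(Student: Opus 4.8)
The plan is to reduce to Proposition~\ref{prop: not bdry-cofinal implies all slopes} by exhibiting a proper relatively convex subgroup of $\pi_1(M)$ that contains $\pi_1(\partial M)$. One would like to simply invoke Corollary~\ref{cor: all weak}(2), but that result assumes $\pi_1(\partial M) \cap \ker(\varphi) = \{1\}$, which is not among our hypotheses. The crux of the argument is that the extra input $\varphi(\lambda_M) \ne 1$ forces $G$ to be nonabelian, and that the nonabelian portion of the proof of Corollary~\ref{cor: all weak}(2) never uses the injectivity of $\varphi$ on $\pi_1(\partial M)$.

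First I would show that $G$ is nonabelian. Suppose instead that $G$ is abelian. Then $\varphi$ factors through the abelianisation $H_1(M)$. Since $H_1(M; \mathbb{Q}) = \mathbb{Q}$, the rational longitude $\lambda_M$ has finite order in $H_1(M)$ by definition, so $\varphi(\lambda_M)$ would be a torsion element of $G$. But a bi-orderable group is torsion-free, whence $\varphi(\lambda_M) = 1$, contradicting the hypothesis $\varphi(\lambda_M) \ne 1$. Therefore $G$ is nonabelian.

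It then remains to produce the desired subgroup by repeating the nonabelian case of the proof of Corollary~\ref{cor: all weak}(2). Fix a bi-order $\mathfrak{o}$ on $G$ and generators $\alpha_0, \alpha_1$ of $\pi_1(\partial M)$; since $\pi_1(\partial M) \cong \mathbb{Z} \oplus \mathbb{Z}$ is abelian, $\varphi(\alpha_0)$ and $\varphi(\alpha_1)$ commute, so $\varphi(\pi_1(\partial M))$ is an abelian subgroup of $G$. If the centraliser $Z_{\varphi(\alpha_i)}$ is proper in $G$ for some $i$, then $\varphi^{-1}(Z_{\varphi(\alpha_i)})$ is a proper subgroup of $\pi_1(M)$ containing $\pi_1(\partial M)$; it is the stabiliser of $\varphi(\alpha_i)$ under the order-preserving conjugation action $\gamma \cdot g = \varphi(\gamma)\, g\, \varphi(\gamma)^{-1}$ of $\pi_1(M)$ on $(G, \mathfrak{o})$, hence is relatively convex by Proposition~\ref{prop: action yields lo}. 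Otherwise $\varphi(\alpha_0), \varphi(\alpha_1) \in Z(G)$, so $\varphi(\pi_1(\partial M)) \subseteq Z(G)$, which is proper since $G$ is nonabelian and is relatively convex in $G$ by \cite[Theorem 2.2.4]{MR}. In either case Proposition~\ref{prop: not bdry-cofinal implies all slopes} then yields $\mathcal{D}_{ord}^{wk}(M) = \mathcal{S}(M)$.

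I expect the only genuine obstacle to be the nonabelianity step: once $G$ is known to be nonabelian, the remaining argument is a verbatim repetition of the corresponding case of Corollary~\ref{cor: all weak}(2). A secondary point to verify carefully is that relative convexity of a subgroup of $G$ pulls back under $\varphi$ to relative convexity of its preimage in $\pi_1(M)$; this follows by applying Proposition~\ref{prop: action yields lo} to the action of $\pi_1(M)$ on the $\mathfrak{o}$-ordered coset space $G/Z(G)$ (respectively on the orbit of $\varphi(\alpha_i)$), the stabiliser of the base point being exactly $\varphi^{-1}(Z(G))$ (respectively $\varphi^{-1}(Z_{\varphi(\alpha_i)})$).
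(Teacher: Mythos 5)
Your proof is correct, but it takes a genuinely different route from the paper's. The paper verifies the missing hypothesis of Corollary \ref{cor: all weak}(2) directly: since $\pi_1(M)$, and hence $G$, is finitely generated and $G$ is bi-orderable, there is an epimorphism $\psi: G \to \mathbb{Z}$ by \cite[Theorem 2.19]{CR}; writing $\mu$ for a class dual to $\lambda_M$, the composition $\psi \circ \varphi$ kills $\lambda_M$ but not $\mu$ (because $H_1(M;\mathbb{Q}) = \mathbb{Q}$), and combining this with torsion-freeness of $G$ and the hypothesis $\varphi(\lambda_M) \ne 1$ gives $\varphi(\mu)^m \ne \varphi(\lambda_M)^n$ unless $m = n = 0$, i.e. $\pi_1(\partial M) \cap \ker(\varphi) = \{1\}$, so Corollary \ref{cor: all weak}(2) applies verbatim. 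You instead sidestep injectivity altogether: you show $G$ is nonabelian (using torsion-freeness of bi-orderable groups and the fact that $\lambda_M$ is torsion in $H_1(M)$), and you correctly observe that the nonabelian branch of the proof of Corollary \ref{cor: all weak}(2) never uses the condition $\pi_1(\partial M) \cap \ker(\varphi) = \{1\}$. This is in fact a mild strengthening of that corollary: for nonabelian bi-orderable $G$, no hypothesis on $\ker(\varphi) \cap \pi_1(\partial M)$ is needed. Your route is more elementary, avoiding the indicability theorem for finitely generated bi-orderable groups, at the cost of reopening the proof of the corollary rather than citing it as a black box. One small wording slip: the coset space $G/Z(G)$ is not ordered by the bi-order $\mathfrak{o}$ you fixed, but by a left-order on $G$ in which $Z(G)$ is convex, which is what \cite[Theorem 2.2.4]{MR} supplies; with that adjustment, your pullback argument via Proposition \ref{prop: action yields lo} is exactly right.
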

\begin{proof}
Let $\mu \in \pi_1(\partial M)$ be an element dual to $\lambda_M$, and note that the image of $\mu$ under any non-trivial homomorphism from $\pi_1(M)$ to a torsion-free abelian group must be nontrivial, while the image of $\lambda_M$ must be trivial.  

Now since $M$ is compact, $\pi_1(M)$ is finitely generated and therefore so is $G$; so by \cite[Theorem 2.19]{CR} there exists an epimorphism $G \rightarrow \mathbb{Z}$.  The image of $\mu$ under the composition $\pi_1(M) \to G \rightarrow \mathbb{Z}$ is nontrivial, and so $\varphi(\mu)^m \neq \varphi(\lambda_M)^n$ unless $m=n=0$.  The result now follows from Corollary \ref{cor: all weak}(2).
\end{proof}

\begin{example} 
\label{example: adam's eg}
{\rm Suppose that $\pi_1(M)$ admits a bi-order $\mathfrak{o}$, for instance $M$ could be the figure eight knot exterior. Then taking $\varphi$ to be the identity in Corollary \ref{cor: all weak}(2) implies that every slope on $\partial M$ is weakly order-detected.}
\end{example}

\subsection{Weak order-detection and boundary-cofinality} 

In this section we introduce the concept of boundary cofinality, which turns out to be essential in the study of slope detection.  In particular, it is closely related to whether or not $\mathcal{S}(M) = \mathcal{D}_{ord}^{wk}(M)$ (Theorem \ref{thm: some not weak implies all cofinal}) and is key in connecting the notions of weak order-detection and order-detection (proof of Theorem \ref{thm: some not weak means weak detd = detd}).

\begin{definition}
{\rm A left-order $\mathfrak{o} \in LO(M)$ is {\it boundary-cofinal} if $\pi_1(\partial M)$ is $\mathfrak{o}$-cofinal.}
\end{definition}

\begin{example} 
\label{example: torus knot cofinality}
It was shown in \cite[Proposition 4.7(2)]{BC} that if $M$ is a torus knot exterior, then each $\mathfrak{o} \in LO(M)$ is a boundary-cofinal. 
\end{example}

Here we convert the convexity condition on $\pi_1(\partial M)$ of Proposition \ref{prop: not bdry-cofinal implies all slopes} into a boundary-cofinality condition. 

\begin{lemma} 
\label{cofinal versus convex}
Let  $\mathfrak{o}$ be a left-order on $\pi_1(M)$. 

$(1)$ If $\pi_1(\partial M)$ is contained in a proper $\mathfrak{o}$-convex subgroup of $\pi_1(M)$, then $\mathfrak{o}$ is not boundary-cofinal.  

$(2)$ If $\mathfrak{o}$ is not boundary-cofinal, there is a left-order $\mathfrak{o}' \in \overline{\mathcal{O}(\mathfrak{o})}$ and an $\mathfrak{o}'$-convex subgroup of $\pi_1(M)$ which contains $\pi_1(\partial M)$.
\end{lemma}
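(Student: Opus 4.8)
The plan is to prove the two parts separately, with part (1) being a quick consequence of convexity and part (2) requiring the dynamical machinery of \S\ref{subsec: left-orders from dyn reals}.

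For part (1), suppose $\pi_1(\partial M) \subseteq C$ where $C$ is a proper $\mathfrak{o}$-convex subgroup. I would observe that the $\mathfrak{o}$-convex hull of $\pi_1(\partial M)$ is contained in $C$: if $g$ satisfies $a_1 \leq_\mathfrak{o} g \leq_\mathfrak{o} a_2$ with $a_1, a_2 \in \pi_1(\partial M) \subseteq C$, then convexity of $C$ forces $g \in C$. Since $C$ is proper, the convex hull of $\pi_1(\partial M)$ is not all of $\pi_1(M)$, so $\pi_1(\partial M)$ is not $\mathfrak{o}$-cofinal and $\mathfrak{o}$ is not boundary-cofinal.

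For part (2), I would pass to a dynamical realisation. Fix an $\mathfrak{o}$-tight embedding $t : \pi_1(M) \to \mathbb{R}$ and let $\rho_\mathfrak{o}$ be the associated dynamic realisation. Non-cofinality of $\pi_1(\partial M)$ means its convex hull is proper, so there is $g \in \pi_1(M)$ lying $\mathfrak{o}$-above all of $\pi_1(\partial M)$ or $\mathfrak{o}$-below all of it; accordingly $t(\pi_1(\partial M))$ is bounded above or below in $\mathbb{R}$. Treating the bounded-above case (the other being symmetric via the infimum), set $x^* = \sup t(\pi_1(\partial M))$. Since each $\rho_\mathfrak{o}(\beta)$ with $\beta \in \pi_1(\partial M)$ is an orientation-preserving homeomorphism permuting the set $t(\pi_1(\partial M)) = \{t(\beta a) : a \in \pi_1(\partial M)\}$, it preserves the supremum, so $\rho_\mathfrak{o}(\beta)(x^*) = x^*$. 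Hence $\pi_1(\partial M) \subseteq \mathrm{Stab}_{\rho_\mathfrak{o}}(x^*)$; in particular this stabiliser is nontrivial, so Lemma \ref{l: stabilisers} shows that $x^*$ is an ideal point of $\mathfrak{o}$.

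With $x^*$ in hand, I would invoke the family $\mathfrak{O}_{x^*}$ of Definition \ref{def: O_x}. Writing $C = \mathrm{Stab}_{\rho_\mathfrak{o}}(x^*)$, the discussion following that definition shows $C$ is convex with respect to every order in $\mathfrak{O}_{x^*}$, and $C$ is proper because $\rho_\mathfrak{o}$ has no global fixed point. By Lemma \ref{lemma: orders associated to ideal points 2} we have $\mathfrak{O}_{x^*} \cap \overline{\mathcal{O}(\mathfrak{o})} \neq \emptyset$, so choosing $\mathfrak{o}' \in \mathfrak{O}_{x^*} \cap \overline{\mathcal{O}(\mathfrak{o})}$ yields a left-order with $\mathfrak{o}' \in \overline{\mathcal{O}(\mathfrak{o})}$ for which $C$ is an $\mathfrak{o}'$-convex subgroup containing $\pi_1(\partial M)$, as required. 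The main subtlety I anticipate is confirming that the supremum $x^*$ is genuinely an ideal point with $\pi_1(\partial M)$ contained in its stabiliser---that is, ruling out $x^* \in t(\pi_1(M))$ or $x^*$ lying in the interior of a gap, either of which would force a trivial stabiliser and contradict $\pi_1(\partial M) \subseteq \mathrm{Stab}_{\rho_\mathfrak{o}}(x^*)$---after which the $\mathfrak{O}_{x^*}$ machinery and Lemma \ref{lemma: orders associated to ideal points 2} plug in directly.
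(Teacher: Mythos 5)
Your proposal is correct and follows essentially the same route as the paper: in both, part (1) is the elementary convexity observation, and part (2) uses the dynamic realisation to produce a point of $\mathbb{R}$ fixed by all of $\pi_1(\partial M)$ (hence an ideal point, by Lemma \ref{l: stabilisers}), then invokes the $\mathfrak{O}_x$ machinery and Lemma \ref{lemma: orders associated to ideal points 2} to extract $\mathfrak{o}' \in \overline{\mathcal{O}(\mathfrak{o})}$ with $\mathrm{Stab}_{\rho_\mathfrak{o}}(x)$ convex. The only difference is cosmetic: you take the fixed point to be $\sup t(\pi_1(\partial M))$ directly, whereas the paper realises it as $\lim_n t(\alpha^n)$ for an $\mathfrak{o}|_{\pi_1(\partial M)}$-cofinal element $\alpha$ and then checks that a second generator of the same sign shares this limit---these yield the same point, and your packaging is marginally tidier.
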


\begin{proof}
(1) Say that $\pi_1(\partial M)$ is contained in a proper $\mathfrak{o}$-convex subgroup $C$ of $\pi_1(M)$. Since $C$ is proper, there is some $\gamma \in (\pi_1(M) \setminus C) \cap P(\mathfrak{o})$, and since $C$ is convex, $\gamma^{-1} < c < \gamma$ for all $c \in C$. In particular, this holds for all $c \in \pi_1(\partial M)$. Hence $\pi_1(\partial M)$ is not $\mathfrak{o}$-cofinal.

(2) Suppose that $\pi_1(\partial M)$ is not $\mathfrak{o}$-cofinal and let $\alpha \in \pi_1(\partial M)$ be $\mathfrak{o}|_{\pi_1(\partial M)}$-cofinal (cf. \S \ref{subsec: lo z2}). Let $t: \pi_1(M) \to \mathbb R$ be a tight order-preserving embedding, as used in the construction of a dynamic realisation $\rho_\mathfrak{o}$ of $\mathfrak{o}$. Since $\langle \alpha \rangle$ is not $\mathfrak{o}$-cofinal, it is either $\mathfrak{o}$-bounded above or $\mathfrak{o}$-bounded below by some element of $\pi_1(M)$. Hence, after possibly replacing $\alpha$ by its inverse, we can assume that $\lim_n t(\alpha^n) = x \in \mathbb R$. Then $\rho_\mathfrak{o}(\alpha)(x) = \lim_n t(\alpha \cdot \alpha^n) = \lim_n t(\alpha^{n+1}) = x$. If $\beta \in \pi_1(\partial M)$ is also not $\mathfrak{o}|_{\pi_1(\partial M)}$-cofinal and has the same sign as $\alpha$, then it is easy to see that $\lim_n t(\beta^n) = x$ and therefore $\rho_\mathfrak{o}(\beta)(x) = x$. Since we can always choose $\alpha, \beta$ to form a generating set of $\pi_1(\partial M)$, it follows that $\pi_1(\partial M) \leq \hbox{Stab}_{\rho_\mathfrak{o}}(x)$. On the other hand, $\hbox{Stab}_{\rho_\mathfrak{o}}(x)$ is convex with respect to any left-order in $\mathfrak{O}_x$ (cf. Definition \ref{def: O_x}). Lemma \ref{lemma: orders associated to ideal points 2} finishes the proof. 
\end{proof}

{\bf Theorem \ref{thm: some not weak implies all cofinal}.} 
{\it If there is a slope on $\partial M$ which is not weakly order-detected, then each $\mathfrak{o} \in LO(M)$ is boundary-cofinal. }

\begin{proof}
Fix $\mathfrak{o} \in LO(M)$ and suppose that $\pi_1(\partial M)$ is not $\mathfrak{o}$-cofinal. Then Lemma \ref{cofinal versus convex}(2) implies that there is a left-order $\mathfrak{o}' \in LO(M)$ and a proper $\mathfrak{o}'$-convex subgroup of $\pi_1(M)$ which contains $\pi_1(\partial M)$. Proposition \ref{prop: not bdry-cofinal implies all slopes} then shows that $\mathcal{D}_{ord}^{wk}(M) = \mathcal{S}(M)$, contradicting our hypotheses. Thus, $\pi_1(\partial M)$ is $\mathfrak{o}$-cofinal for each $\mathfrak{o} \in LO(M)$. 
\end{proof}

\section{A dynamical interpretation of boundary-cofinality}
\label{sec: bdry cof and dynamics}

In this section we interpret boundary-cofinality in terms of actions on the reals to connect it with work of Nie (\cite{Nie1, Nie2}).

\begin{lemma}
\label{lemma: bdry-cofinal implies fpf} 
Let $\mathfrak{o} \in LO(M)$ and $\rho_\mathfrak{o}: \pi_1(M) \to \mbox{Homeo}_+(\mathbb R)$ a dynamic realisation of $\mathfrak{o}$.  

$(1)$ $\gamma \in \pi_1(M)$ is $\mathfrak{o}$-cofinal if and only if $\rho_\mathfrak{o}(\gamma)$ acts fixed-point freely on $\mathbb R$. 

$(2)$ $\mathfrak{o}$ is boundary-cofinal if and only if each element of $\rho_\mathfrak{o}(\pi_1(\partial M) \setminus [L(\mathfrak{o})])$ acts fixed-point freely on $\mathbb R$.

$(3)$ For each $\alpha \in [L(\mathfrak{o})] \cap \pi_1(\partial M)$, $\rho_\mathfrak{o}(\alpha)$ has fixed points in $\mathbb R$. 

\end{lemma}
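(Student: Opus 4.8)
The plan is to translate the order-theoretic statement ``$\gamma$ is $\mathfrak{o}$-cofinal'' into the dynamical statement ``$\rho_\mathfrak{o}(\gamma)$ acts fixed-point freely,'' and then read off parts (2) and (3) by combining part (1) with the structure of $L(\mathfrak{o}) \cap \pi_1(\partial M)$ described in \S\ref{subsec: lo z2}. Throughout I will use that the dynamic realisation $\rho_\mathfrak{o}$ arises from a tight order-preserving embedding $t: \pi_1(M) \to \mathbb{R}$ with $\rho_\mathfrak{o}(g)(0) = t(g)$, and that $t(\pi_1(M))$ is unbounded above and below in $\mathbb{R}$.

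For part (1), I would argue through the orbit of the basepoint $0$. First suppose $\gamma$ is $\mathfrak{o}$-cofinal, so $\langle \gamma \rangle$ has convex hull all of $\pi_1(M)$. Replacing $\gamma$ by $\gamma^{-1}$ if needed, assume $\gamma >_\mathfrak{o} 1$, so that the sequence $t(\gamma^n) = \rho_\mathfrak{o}(\gamma^n)(0)$ is strictly increasing. Cofinality forces $t(\gamma^n) \to +\infty$ and $t(\gamma^{-n}) \to -\infty$: if instead the increasing orbit were bounded above by some $t(h)$, then no element exceeding $h$ would lie in the convex hull of $\langle\gamma\rangle$, contradicting cofinality (here I use that $t$ is order-preserving and $t(\pi_1(M))$ is unbounded). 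An increasing homeomorphism of $\mathbb{R}$ whose orbit of one point escapes to both $\pm\infty$ can have no fixed point, since a fixed point $x$ would trap the orbit of any nearby point on one side. Conversely, if $\rho_\mathfrak{o}(\gamma)$ is fixed-point free, then (being orientation-preserving) it translates every point strictly in one direction, so $t(\gamma^n)(0) \to \pm\infty$ and the convex hull of the orbit of $0$, hence of $\langle\gamma\rangle$, is cofinal in $t(\pi_1(M))$; tightness then gives $\mathfrak{o}$-cofinality of $\langle\gamma\rangle$ in $\pi_1(M)$.

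For part (2), recall from \S\ref{subsec: lo z2} that the set of $\mathfrak{o}|_{\pi_1(\partial M)}$-cofinal elements of $\pi_1(\partial M) \cong \mathbb{Z}^2$ is exactly $\pi_1(\partial M) \setminus L(\mathfrak{o})$. By definition $\mathfrak{o}$ is boundary-cofinal iff $\pi_1(\partial M)$ is $\mathfrak{o}$-cofinal in $\pi_1(M)$. I would show that an element of $\pi_1(\partial M)\setminus L(\mathfrak{o})$ is $\mathfrak{o}|_{\pi_1(\partial M)}$-cofinal if and only if it is $\mathfrak{o}$-cofinal in $\pi_1(M)$: one direction is immediate, and for the key direction I would use that $\pi_1(\partial M)$ is $\mathfrak{o}$-cofinal precisely when it is not contained in a proper $\mathfrak{o}$-convex subgroup, together with the observation that if some $\gamma \in \pi_1(\partial M)\setminus L(\mathfrak{o})$ fails to be $\mathfrak{o}$-cofinal in $\pi_1(M)$ then its bounded powers force $\pi_1(\partial M)$ into the proper convex subset $[\beta^{-1},\beta]$ for a suitable bounding $\beta$. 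Combining this equivalence with part (1) converts the statement into exactly ``every element of $\rho_\mathfrak{o}(\pi_1(\partial M)\setminus [L(\mathfrak{o})])$ acts fixed-point freely.'' Part (3) is then the complementary assertion: for $\alpha \in L(\mathfrak{o})\cap\pi_1(\partial M)$, the element $\alpha$ is \emph{not} $\mathfrak{o}$-cofinal, since on the line $L(\mathfrak{o})$ the powers $\alpha^n$ stay $\mathfrak{o}$-bounded within $\pi_1(\partial M)$ (indeed $\langle\alpha\rangle = L(\mathfrak{o})\cap\pi_1(\partial M)$ is $\mathfrak{o}|_{\pi_1(\partial M)}$-convex by \S\ref{subsec: lo z2}, hence $\mathfrak{o}$-bounded), so by part (1) $\rho_\mathfrak{o}(\alpha)$ has a fixed point.

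The main obstacle I anticipate is the careful direction of part (1): rigorously arguing that $\mathfrak{o}$-cofinality of $\langle\gamma\rangle$ in the \emph{whole} group $\pi_1(M)$ forces the orbit $t(\gamma^n)$ to be unbounded in $\mathbb{R}$ (rather than merely accumulating at an ideal point), where tightness of $t$ and unboundedness of $t(\pi_1(M))$ must be invoked precisely. A secondary subtlety is the equivalence in part (2) between cofinality of an element computed inside $\pi_1(\partial M)$ versus inside $\pi_1(M)$; this is where Lemma~\ref{cofinal versus convex} and the convex-subgroup characterisation of boundary-cofinality do the real work, and I would want to make sure the restriction of $\mathfrak{o}$ to $\pi_1(\partial M)$ and its line $L(\mathfrak{o})$ interact correctly with convex hulls taken in the larger group.
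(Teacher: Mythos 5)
Your proposal is correct and follows essentially the same route as the paper's proof: part (1) is argued through the tight embedding $t$ and the (un)boundedness of $t(\langle\gamma\rangle)$, part (2) follows from (1) together with the fact from \S\ref{subsec: lo z2} that $\pi_1(\partial M)\setminus L(\mathfrak{o})$ is precisely the set of $\mathfrak{o}|_{\pi_1(\partial M)}$-cofinal elements, and part (3) follows from the $\mathfrak{o}|_{\pi_1(\partial M)}$-convexity, hence boundedness, of $L(\mathfrak{o})\cap\pi_1(\partial M)$.

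One step in part (2) should be repaired, though the fix lies inside your own setup. You claim that if $\gamma\in\pi_1(\partial M)\setminus L(\mathfrak{o})$ fails to be $\mathfrak{o}$-cofinal in $\pi_1(M)$, then its bounded powers trap $\pi_1(\partial M)$ in a symmetric interval $[\beta^{-1},\beta]$. This is not available here: failure of cofinality only gives a \emph{one-sided} bound on $\langle\gamma\rangle$, and under a left-order (unlike a bi-order) an upper bound $\gamma^n<_{\mathfrak{o}}\beta$ for all $n$ does not yield the lower bound $\beta^{-1}<_{\mathfrak{o}}\gamma^{-n}$; the symmetric intervals of \S\ref{subsec: convex subgroups} arise only around $\mathfrak{o}$-convex \emph{subgroups}, and the convex hull of $\langle\gamma\rangle$ need not be a subgroup. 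The repair is immediate: since $\gamma$ is $\mathfrak{o}|_{\pi_1(\partial M)}$-cofinal, every element of $\pi_1(\partial M)$ lies between two powers of $\gamma$, so the $\mathfrak{o}$-convex hull of $\langle\gamma\rangle$ is itself a proper $\mathfrak{o}$-convex subset of $\pi_1(M)$ containing $\pi_1(\partial M)$, contradicting boundary-cofinality. (Relatedly, your intermediate ``if and only if'' is vacuous on one side---every element of $\pi_1(\partial M)\setminus L(\mathfrak{o})$ is automatically $\mathfrak{o}|_{\pi_1(\partial M)}$-cofinal; what you need, and in effect prove, is the equivalence between boundary-cofinality of $\mathfrak{o}$ and the $\mathfrak{o}$-cofinality in $\pi_1(M)$ of all such elements, which is exactly the transitivity-of-cofinality observation the paper uses.)
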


\begin{proof}
Suppose that $\mathfrak{o}$ is boundary-cofinal and let $t: \pi_1(M) \to \mathbb R$ be the tight order-preserving embedding used to construct $\rho_\mathfrak{o}$. 
If  $\gamma \in \pi_1(M)$ is $\mathfrak{o}$-cofinal then $t(\langle \gamma \rangle)$ is unbounded above and below in $\mathbb R$, from which it follows that $\rho_\mathfrak{o}(\gamma)$ acts fixed-point freely on $\mathbb R$. Conversely, if $\rho_\mathfrak{o}(\gamma)$ acts fixed-point freely on $\mathbb R$, note that as the supremum and infinum of $t(\langle \gamma \rangle)$ (thought of as lying in $[-\infty, +\infty]$) are fixed by $\rho_\mathfrak{o}(\gamma)$, $t(\langle \gamma \rangle)$ must be unbounded above and below. Thus $\gamma$ is $\mathfrak{o}$-cofinal, which proves (1). 

For (2), recall from \S \ref{subsec: lo z2} that each element $\alpha$ of $\pi_1(\partial M) \setminus [L(\mathfrak{o})]$ is $\mathfrak{o}|_{\pi_1(\partial M)}$-cofinal, so the boundary-cofinality of $\mathfrak{o}$ implies that each such $\alpha$ is $\mathfrak{o}$-cofinal. Thus (1) implies (2). 

Assertion (3) clearly holds for $\alpha = 1$, so suppose otherwise. If there exists $\alpha \neq 1$ in $[L(\mathfrak{o})] \cap \pi_1(\partial M)$, then $[L(\mathfrak{o})] \cap \pi_1(\partial M) \cong \mathbb Z$ and without loss of generality we can suppose that $[L(\mathfrak{o})] \cap \pi_1(\partial M) = \langle \alpha \rangle$. Then $\langle \alpha \rangle$ is an $\mathfrak{o}|_{\pi_1(\partial M)}$-convex. Hence it is $\mathfrak{o}$-bounded above and below in $\pi_1(\partial M)$. It follows that $\rho_\mathfrak{o}(\alpha)$ fixes $x = \sup t(\langle \alpha \rangle)$, which completes the proof of (2). 
\end{proof}

This lemma leads to a reformulation of the notion of boundary-cofinality.  

\begin{proposition}
\label{prop: bdry cofinal iff no fixed pts}
The following statements are equivalent.

$(1)$ Each $\mathfrak{o} \in LO(M)$ is boundary-cofinal.

$(2)$ If $\rho: \pi_1(M) \to \mbox{Homeo}_+(\mathbb R)$ has no fixed point in $\mathbb R$, then $\rho|_{\pi_1(\partial M)}$ has no fixed points in $\mathbb R$. 
 
\end{proposition}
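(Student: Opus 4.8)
The plan is to establish the equivalence by proving the contrapositive of each implication, using the dynamic realisation to pass from a left-order to an action on $\mathbb R$, and Proposition \ref{prop: action yields lo} to pass from an action back to a left-order. Throughout I use that $\pi_1(M)$ is a nontrivial, countable, left-orderable group and that $\pi_1(\partial M) \cong \mathbb Z^2$ is nontrivial; countability guarantees the dynamic realisation of Proposition \ref{prop: dyn realisation} is available.

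For $(2) \Rightarrow (1)$ I argue contrapositively: assuming some $\mathfrak{o} \in LO(M)$ is not boundary-cofinal, I exhibit an action violating $(2)$, namely the dynamic realisation $\rho_\mathfrak{o}$ itself. By Proposition \ref{prop: dyn realisation} this action has no global fixed point, so it suffices to show that $\rho_\mathfrak{o}|_{\pi_1(\partial M)}$ has a common fixed point. Since $\pi_1(\partial M)$ is not $\mathfrak{o}$-cofinal, its $\mathfrak{o}$-convex hull is a proper subset, so $\pi_1(\partial M)$ is $\mathfrak{o}$-bounded above or $\mathfrak{o}$-bounded below. Writing $t$ for the tight embedding used to build $\rho_\mathfrak{o}$, set $x = \sup t(\pi_1(\partial M))$ in the first case and $x = \inf t(\pi_1(\partial M))$ in the second; either way $x \in \mathbb R$. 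For every $\delta \in \pi_1(\partial M)$ the homeomorphism $\rho_\mathfrak{o}(\delta)$ maps the set $t(\pi_1(\partial M))$ onto itself, because $\delta \pi_1(\partial M) = \pi_1(\partial M)$ and $\rho_\mathfrak{o}(\delta)(t(\gamma)) = t(\delta\gamma)$. An increasing homeomorphism of $\mathbb R$ carries the supremum (resp. infimum) of a bounded set onto that of its image, so $\rho_\mathfrak{o}(\delta)(x) = x$; thus $x$ is the sought common fixed point and $\rho_\mathfrak{o}$ witnesses the failure of $(2)$.

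For $(1) \Rightarrow (2)$ I again argue contrapositively, beginning with an action $\rho \colon \pi_1(M) \to \mbox{Homeo}_+(\mathbb R)$ having no global fixed point but for which $\rho|_{\pi_1(\partial M)}$ fixes some point $x \in \mathbb R$, and producing a non-boundary-cofinal order. The stabiliser $S = \mbox{Stab}_\rho(x)$ contains $\pi_1(\partial M)$, hence is nontrivial, and it is proper in $\pi_1(M)$ since otherwise $x$ would be a global fixed point. As a subgroup of the left-orderable group $\pi_1(M)$, $S$ carries a left-order $\mathfrak{o}_0$. Applying Proposition \ref{prop: action yields lo} to the order-preserving action of $\pi_1(M)$ on $(\mathbb R, <)$, the point $x$, and the order $\mathfrak{o}_0$, I obtain a left-order $\mathfrak{o}_x \in LO(M)$ for which $S$ is $\mathfrak{o}_x$-convex. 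Then $\pi_1(\partial M)$ lies in the proper $\mathfrak{o}_x$-convex subgroup $S$, so Lemma \ref{cofinal versus convex}(1) shows that $\mathfrak{o}_x$ is not boundary-cofinal, contradicting $(1)$.

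The main obstacle is the common-fixed-point step in the direction $(2) \Rightarrow (1)$: non-cofinality directly yields only that $\pi_1(\partial M)$ is $\mathfrak{o}$-bounded on one side, and this must be upgraded to a single point of $\mathbb R$ fixed by the whole rank-two group $\pi_1(\partial M)$, not merely by its individual elements. The observation that resolves this is that an orientation-preserving homeomorphism of $\mathbb R$ preserving a bounded set setwise must fix its supremum (and infimum), applied uniformly across $\pi_1(\partial M)$ via the fact that each of its elements permutes the coset $\pi_1(\partial M)$ and therefore permutes $t(\pi_1(\partial M))$. The remaining steps are routine applications of the order--action correspondence already developed in \S\ref{sec: dyn reals} and Lemma \ref{cofinal versus convex}.
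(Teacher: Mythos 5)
Your proof is correct and follows essentially the same route as the paper's: for $(1) \Rightarrow (2)$ both arguments use the stabiliser of a boundary fixed point, Proposition \ref{prop: action yields lo}, and the convexity obstruction to boundary-cofinality, while for $(2) \Rightarrow (1)$ both rest on the observation that the supremum or infimum of $t(\pi_1(\partial M))$, if finite, is fixed by every element of $\rho_\mathfrak{o}(\pi_1(\partial M))$. The only cosmetic difference is that you phrase both implications contrapositively, whereas the paper argues them directly.
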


\begin{proof} 
Assume (1), and suppose that  $\rho: \pi_1(M) \to \mbox{Homeo}_+(\mathbb R)$ is such that $\rho|_{\pi_1(\partial M)}$ has a fixed point $x_0 \in \mathbb{R}$.  Then as $\pi_1(M)$ is left-orderable, the subgroup $\mbox{Stab}_\rho(x_0)$ is also left-orderable, and it also contains $\pi_1(\partial M)$.  By Proposition \ref{prop: action yields lo}, there is a left-order $\mathfrak{o}_{x_0}$ on $\pi_1(M)$ relative to which $\mbox{Stab}_\rho(x_0)$ is convex. However as $\mathfrak{o}_{x_0}$ must be boundary-cofinal, its definition in Proposition \ref{prop: action yields lo} forces $\mbox{Stab}_\rho(x_0) = \pi_1(M)$. Thus (2) holds.

In the other direction, suppose (2) holds and let $\rho_{\mathfrak{o}}: \pi_1(M) \to \mbox{Homeo}_+(\mathbb R)$ be a dynamic realisation of $\mathfrak{o} \in LO(M)$ constructed using the tight embedding $t: \pi_1(M) \rightarrow \mathbb{R}$. By construction, $\rho_{\mathfrak{o}}$ has no global fixed point, so condition (2) implies that the same is true for $\rho_{\mathfrak{o}}|_{\pi_1(\partial M)}$. Then $\{ \rho_{\mathfrak{o}}(\alpha)(0)(t(\alpha)) \; | \; \alpha \in \pi_1(\partial M) \}$ must be unbounded above and below, otherwise the supremum or infimum of this set would be bounded and be fixed by the action of every element in $\pi_1(\partial M)$. Thus $\mathfrak{o} \in LO(M)$ is boundary-cofinal.
\end{proof}

Statement (2) of the proposition appears as condition (a) on a knot manifold $M$ in recent work of Nie (\cite{Nie1}). Nie had considered the exteriors of knots in $S^3$, but it is natural to work in full generality.  

\begin{corollary}
\label{cor: bdry cofinal iff no fixed pts}
Every left-order $\mathfrak{o} \in LO(M)$ is boundary-cofinal if and only if each homomomorphism $\rho: \pi_1(M) \to \mbox{Homeo}_+(\mathbb R)$ which has no fixed point in $\mathbb R$, has no fixed points when restricted to $\pi_1(\partial M)$. 
\qed 
\end{corollary}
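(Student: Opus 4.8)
The plan is to observe that the corollary is nothing more than a verbatim restatement of the equivalence $(1) \Leftrightarrow (2)$ in Proposition \ref{prop: bdry cofinal iff no fixed pts}, so no new argument is required. First I would match the two statements: condition $(1)$ of the proposition, ``each $\mathfrak{o} \in LO(M)$ is boundary-cofinal,'' is exactly the left-hand side of the corollary's biconditional, while condition $(2)$, ``if $\rho: \pi_1(M) \to \mbox{Homeo}_+(\mathbb R)$ has no fixed point in $\mathbb R$ then $\rho|_{\pi_1(\partial M)}$ has no fixed point,'' is exactly its right-hand side. The only point to unwind is that ``$\rho$ has no fixed point in $\mathbb R$'' means the $\rho$-action has no global fixed point, which is the convention fixed throughout \S\ref{subsec: dyn real}.

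For completeness I would briefly recall which earlier results drive the two implications inside the proposition, since these carry all the content. The forward direction rests on Proposition \ref{prop: action yields lo}: given any $\rho$ for which $\rho|_{\pi_1(\partial M)}$ fixes some $x_0$, the left-orderable stabiliser $\mbox{Stab}_\rho(x_0) \supseteq \pi_1(\partial M)$ produces a left-order on $\pi_1(M)$ with $\mbox{Stab}_\rho(x_0)$ convex; boundary-cofinality then forces $\mbox{Stab}_\rho(x_0) = \pi_1(M)$, so $\rho$ has a global fixed point. The reverse direction applies the hypothesis to the dynamic realisation $\rho_\mathfrak{o}$ of a given $\mathfrak{o} \in LO(M)$ (Proposition \ref{prop: dyn realisation}), which is a fixed-point-free action; the hypothesis forces $\rho_\mathfrak{o}|_{\pi_1(\partial M)}$ to have no global fixed point, whence the boundary orbits are unbounded and $\mathfrak{o}$ is boundary-cofinal.

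There is no genuine obstacle here: the corollary simply records Proposition \ref{prop: bdry cofinal iff no fixed pts} in the homomorphism-theoretic language used by Nie (\cite{Nie1}), dropping the enumerated labels $(1)$ and $(2)$ in favour of a single biconditional. Its proof is therefore the one sentence ``this is the equivalence of Proposition \ref{prop: bdry cofinal iff no fixed pts},'' which is precisely why the statement is stamped with \qed rather than accompanied by a separate argument.
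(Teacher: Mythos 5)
Your proposal is correct and matches the paper exactly: the corollary is indeed nothing more than a restatement of the equivalence $(1) \Leftrightarrow (2)$ of Proposition \ref{prop: bdry cofinal iff no fixed pts}, which is why the paper attaches a \qedsymbol\ to the statement with no separate argument. Your recollection of the two implications (the forward one via Proposition \ref{prop: action yields lo} applied to $\mbox{Stab}_\rho(x_0)$, the reverse one via the dynamic realisation $\rho_{\mathfrak{o}}$) also faithfully reflects the paper's proof of that proposition.
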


Nie has shown that many $L$-space knots satisfy property (a), including those $(1,1)$-knots which are $L$-space knots (\cite{Nie1}). This family includes $1$-bridge braid knots and certain families of twisted torus knots. We expect property (a) to hold for all $L$-space knot groupss and, more generally, the fundamental groups of Floer simple manifolds (cf. Conjecture \ref{conj: floer simple implies bdry-cofinal}).

\section{Order-detection of slopes} 
\label{sec: reg}
Here we introduce a form of slope detection adapted to understanding the left-orderability of the fundamental groups of $3$-manifolds obtained gluing knot manifolds by together along their boundary tori. We begin with an analysis of how weak order-detection behaves with respect to the action of $\pi_1(M)$ on $LO(M)$.  

\subsection{Weak order-detection and peripheral subgroups}
A {\it peripheral subgroup} of $\pi_1(M)$ is a subgroup of the form $g \pi_1(\partial M) g^{-1}$, where $g \in \pi_1(M)$.  If $\mathfrak{o} \in LO(M)$, the restriction $\mathfrak{o}|_{g \pi_1(\partial M) g^{-1}}$ determines a line $ L(\mathfrak{o}; g \pi_1(\partial M) g^{-1})$ in $H_1(g \pi_1(\partial M) g^{-1}; \mathbb R) = g \pi_1(\partial M) g^{-1} \otimes \mathbb R$ in the usual way: all elements of $g \pi_1(\partial M) g^{-1}= H_1(g \pi_1(\partial M) g^{-1})$ which lie to one side of it are $\mathfrak{o}$-positive and all elements lying to the other side are $\mathfrak{o}$-negative. This line determines a slope $[L({\mathfrak{o}}; g \pi_1(\partial M) g^{-1})]$ on $\partial M$ using the isomorphism
\vspace{-.2cm} 
\begin{center} 
\begin{tikzpicture}[scale=0.7]

\node at (.5, 4.5) {$H_1(\pi_1(\partial M); \mathbb R) = $};
\node at (4, 4.5) {$\pi_1(\partial M) \otimes \mathbb R$};
\node at (14, 4.5) {$(g \pi_1(\partial M) g^{-1}) \otimes \mathbb R$};
\node at (18.7, 4.5) {$= H_1(g \pi_1(\partial M) g^{-1}; \mathbb R)$};

\node at (8.5, 4.9) {$\beta \otimes t \mapsto (g^{-1} \beta g) \otimes t$}; 

\draw [ ->] (5.5, 4.5) --(11.8,4.5); 

\end{tikzpicture}
\end{center} 

The following example shows that in general, $[L(\mathfrak{o}; g \pi_1(\partial M) g^{-1})]$ can vary with the choice of peripheral subgroup.  

\begin{example} 
\label{example: variation with peripheral subgroup}
Suppose that $\pi_1(M)$ admits a bi-order $\mathfrak{o}$; for instance $M$ could be the figure eight knot exterior. Then taking $\varphi$ to be the identity in Corollary \ref{cor: all weak}(2) and considering its proof shows that if we take $\alpha \in \pi_1(\partial M)$ with centraliser $Z_\alpha = \pi_1(\partial M)$ (\cite[Theorem 1]{Sim}), then $\pi_1(M)$ admits a left-order $\mathfrak{o}_\alpha$ for which $\pi_1(\partial M)$ is convex. Hence, we can create new left-orders on $\pi_1(M)$ by  arbitrarily altering $\mathfrak{o}_\alpha$ on $\pi_1(\partial M)$ and leaving it unaltered on $\pi_1(M) \setminus \pi_1(\partial M)$ (Proposition \ref{prop: still convex}(2)). In particular, this can be done so that $[L(\mathfrak{o}; \pi_1(\partial M))] \ne [L(\mathfrak{o}; g\pi_1(\partial M)g^{-1})]$ for an appropriately chosen $g \in \pi_1(M)$. 
\end{example}

The next proposition shows how boundary-confinality guarantees that the slope $[L(\mathfrak{o}); g \pi_1(\partial M)g^{-1}]$ is independent of the choice of $g \in \pi_1(M)$. 

\begin{proposition}
\label{prop: bdry cofinal implies invt} 
Suppose that $\mathfrak{o} \in LO(M)$ is boundary-cofinal. Then 
$$[L(\mathfrak{o}; \pi_1(\partial M))] = [L(\mathfrak{o}; g\pi_1(\partial M)g^{-1})]$$ 
for all $g \in \pi_1(M)$. 

\end{proposition}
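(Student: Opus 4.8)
The plan is to work inside a dynamic realisation $\rho = \rho_{\mathfrak{o}}\colon \pi_1(M)\to \mathrm{Homeo}_+(\mathbb{R})$ of $\mathfrak{o}$ and to prove the stronger statement that the slope map is constant on the $\pi_1(M)$-orbit of $\mathfrak{o}$, i.e. $[L(h\cdot\mathfrak{o})] = [L(\mathfrak{o})]$ for every $h\in\pi_1(M)$. The proposition follows from this: the conjugation isomorphism $\pi_1(\partial M)\to g\pi_1(\partial M)g^{-1}$, $\alpha\mapsto g\alpha g^{-1}$, carries the order $(h\cdot\mathfrak{o})|_{\pi_1(\partial M)}$ to $\mathfrak{o}|_{g\pi_1(\partial M)g^{-1}}$ for the appropriate $h\in\{g,g^{-1}\}$, hence carries $L(h\cdot\mathfrak{o})$ to $L(\mathfrak{o};g\pi_1(\partial M)g^{-1})$, so that $[L(\mathfrak{o};g\pi_1(\partial M)g^{-1})] = [L(h\cdot\mathfrak{o})]$. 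To begin I would record two consequences of boundary-cofinality drawn from Lemma \ref{lemma: bdry-cofinal implies fpf}: for $\alpha\in\pi_1(\partial M)$ the homeomorphism $\rho(\alpha)$ is fixed-point free exactly when $\alpha\notin L(\mathfrak{o})$, and since an orientation-preserving fixed-point-free homeomorphism of $\mathbb{R}$ has $\rho(\alpha)(x)-x$ of constant sign, for such $\alpha$ one has $\rho(\alpha)(x)>x$ for all $x$ when $\alpha$ is $\mathfrak{o}$-positive and $\rho(\alpha)(x)<x$ for all $x$ when $\alpha$ is $\mathfrak{o}$-negative.

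Next I would compare the signs assigned by $\mathfrak{o}$ and $h\cdot\mathfrak{o}$ to elements of $\pi_1(\partial M)$. By Lemma \ref{lemma: conjugate orders 1}(1) the order $h\cdot\mathfrak{o}$ is the order attached to the orbit of the point $t(h)\in\mathbb{R}$, so a short computation shows that for $\alpha\in\pi_1(\partial M)$ one has $\alpha>_{h\cdot\mathfrak{o}}1$ if and only if $\rho(\alpha)(t(h))>t(h)$. When $\alpha\notin L(\mathfrak{o})$, the preceding paragraph shows $\rho(\alpha)$ moves every point in the same direction, so $\rho(\alpha)(t(h))>t(h)$ holds precisely when $\rho(\alpha)(0)=t(\alpha)>0$, that is, precisely when $\alpha$ is $\mathfrak{o}$-positive. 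Thus $\mathfrak{o}$ and $h\cdot\mathfrak{o}$ assign the \emph{same} sign to every element of $\pi_1(\partial M)\setminus L(\mathfrak{o})$.

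Finally I would deduce that $L(h\cdot\mathfrak{o})=L(\mathfrak{o})$ from this sign agreement. The line $L(\mathfrak{o})$ is characterised by the property that the $\mathfrak{o}$-positive elements lying off it fall strictly on one side, and by the agreement just established the same partition of the off-line elements is realised by $h\cdot\mathfrak{o}$. Were $L(h\cdot\mathfrak{o})\neq L(\mathfrak{o})$, these two distinct lines through the origin would bound a pair of open sectors on which the two half-plane conditions are opposite, so a lattice point lying strictly inside such a sector would be $\mathfrak{o}$-positive but $(h\cdot\mathfrak{o})$-negative (or vice versa); since the slopes of nonzero elements of $\pi_1(\partial M)$ are dense in $\mathcal{S}(M)$, such a lattice point exists and lies off both lines, contradicting the sign agreement. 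Hence $[L(h\cdot\mathfrak{o})]=[L(\mathfrak{o})]$, giving the proposition. I expect this last step to be the main obstacle, since it requires promoting pointwise agreement of signs on the complement of a single line — which, in the irrational-slope case, contains no nonzero lattice point at all — to genuine equality of the two separating lines; the density of rational directions is precisely what makes the sector argument succeed in that case.
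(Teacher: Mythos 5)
Your proof is correct, and it rests on the same dynamical mechanism as the paper's: boundary-cofinality promotes every peripheral element off $L(\mathfrak{o})$ to an $\mathfrak{o}$-cofinal element of $\pi_1(M)$, which by Lemma \ref{lemma: bdry-cofinal implies fpf}(1) acts fixed-point freely in the dynamic realisation, and fixed-point-freeness is what makes signs conjugation-invariant. The organisation, however, is reversed relative to the paper, and this buys you something. The paper proves the proposition directly by comparing the $\mathfrak{o}$-signs of $\gamma$ and $g\gamma g^{-1}$ for $\gamma$ off the line, and for the key step --- that conjugation preserves the sign of an $\mathfrak{o}$-cofinal element --- it cites \cite[Lemma 4.6(1)]{BC}; orbit-invariance of the slope is then deduced afterwards as Corollary \ref{cor: invariance under action} by combining the proposition with Lemma \ref{lemma: variance under action}. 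You instead prove orbit-invariance first and recover the proposition from the (unconditional) conjugation identity, i.e.\ from Lemma \ref{lemma: variance under action}; your sign comparison of a fixed $\alpha$ under $\mathfrak{o}$ versus $h\cdot\mathfrak{o}$ is literally equivalent to the paper's comparison of $\gamma$ with $g\gamma g^{-1}$ under $\mathfrak{o}$ alone (via $P(h\cdot\mathfrak{o})=hP(\mathfrak{o})h^{-1}$), but by interpreting $h\cdot\mathfrak{o}$ as the order based at $t(h)$ (Lemma \ref{lemma: conjugate orders 1}(1)) and using that a fixed-point-free orientation-preserving homeomorphism of $\mathbb{R}$ moves all points in the same direction, you prove the fact the paper imports from \cite{BC} inline, so your argument is self-contained where the paper's is not. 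Finally, your sector-and-density argument supplies the step the paper compresses into ``It follows that'': two left-orders on $\mathbb{Z}^2$ that agree in sign off the line of the first must have the same line. That step does deserve an argument --- exactly for the reason you flag, that an irrational line misses the nonzero lattice points entirely --- and yours is valid: if the lines differed, each of the four open sectors they bound would be nonempty and hence contain lattice points, and any lattice point in a sector where the two half-plane conditions disagree contradicts the sign agreement.
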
 

\begin{proof} 
Fix $g \in G$ and let $\rho_\mathfrak{o}: \pi_1(M) \to \mbox{Homeo}_+(\mathbb R)$ be a dynamic realisation of $\mathfrak{o}$. It follows from \S \ref{subsec: lo z2} that each $\gamma \in \pi_1(\partial M) \setminus [L(\mathfrak{o})]$ is $\mathfrak{o}|_{\pi_1(\partial M)}$-cofinal, and since we have assumed that $\mathfrak{o}$ is boundary-cofinal, each such $\gamma$ is $\mathfrak{o}$-cofinal. Lemma \ref{lemma: bdry-cofinal implies fpf}(1) shows that $\rho_\mathfrak{o}(\gamma)$ acts fixed-point freely on $\mathbb R$, so the same is true of $\rho_\mathfrak{o}(g \gamma g^{-1})$. Another application of Lemma \ref{lemma: bdry-cofinal implies fpf}(1) then shows that $g \gamma g^{-1}$ is $\mathfrak{o}$-cofinal. Further, it follows from \cite[Lemma 4.6(1)]{BC} that $g \gamma g^{-1} >_\mathfrak{o} 1$ if and only if $\gamma >_\mathfrak{o} 1$. Thus, each element of $g (\pi_1(\partial M) \setminus [L(\mathfrak{o})])g^{-1} $ is $\mathfrak{o}$-cofinal and of the same $\mathfrak{o}$-sign as $\gamma$. It follows that 
$[L(\mathfrak{o}; g\pi_1(\partial M)g^{-1})] = [L(\mathfrak{o}; \pi_1(\partial M))]$.
\end{proof}

We end our discussion of weak order-detection with a lemma that will prove important in our gluing theorem. 

\begin{lemma}
\label{lemma: variance under action} 
For every order $\mathfrak{o} \in LO(M)$ and $g \in \pi_1(M)$, we have
$$[L(g^{-1} \cdot \mathfrak{o}; \pi_1(\partial M))] = [L(\mathfrak{o}; g \pi_1(\partial M) g^{-1})]$$ 
\end{lemma}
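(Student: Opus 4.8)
The plan is to reduce the statement to a single observation: conjugation by $g$ is an order-preserving isomorphism from $(\pi_1(\partial M), g^{-1}\cdot\mathfrak{o})$ to $(g\pi_1(\partial M)g^{-1}, \mathfrak{o})$, and this conjugation is exactly the map used to define the slope $[L(\mathfrak{o}; g\pi_1(\partial M)g^{-1})]$. Once this is established, equality of the two slopes is immediate, since the line $L(\cdot)$ attached to a left-order of $\mathbb{Z}^2$ is determined by its positive cone (cf. \S \ref{subsec: lo z2}) and is therefore transported correctly by any order isomorphism of lattices.

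First I would record the only computation needed. Write $c_g \colon \pi_1(\partial M) \to g\pi_1(\partial M)g^{-1}$, $\beta \mapsto g\beta g^{-1}$, for the conjugation isomorphism; this is the map inducing the isomorphism $H_1(\pi_1(\partial M);\mathbb{R}) \cong H_1(g\pi_1(\partial M)g^{-1};\mathbb{R})$ appearing in the definition of $[L(\mathfrak{o}; g\pi_1(\partial M)g^{-1})]$. Using the definition $P(g^{-1}\cdot\mathfrak{o}) = g^{-1}P(\mathfrak{o})g$ of the $\pi_1(M)$-action on $LO(M)$, I get for every $\beta \in \pi_1(\partial M)$ that
$$\beta >_{g^{-1}\cdot\mathfrak{o}} 1 \iff g\beta g^{-1} \in P(\mathfrak{o}) \iff c_g(\beta) >_\mathfrak{o} 1.$$
Since $c_g$ is a homomorphism this upgrades at once to $\beta_1 <_{g^{-1}\cdot\mathfrak{o}} \beta_2 \iff c_g(\beta_1) <_\mathfrak{o} c_g(\beta_2)$, so $c_g$ is an isomorphism of ordered groups carrying $g^{-1}\cdot\mathfrak{o}|_{\pi_1(\partial M)}$ to $\mathfrak{o}|_{g\pi_1(\partial M)g^{-1}}$. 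It is worth emphasising that this step uses only the definition of the action and makes no appeal to boundary-cofinality --- in contrast with Proposition \ref{prop: bdry cofinal implies invt}, here we are comparing two \emph{different} orders rather than asserting that conjugation preserves the sign of a single order.

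Finally I would pass to the induced linear isomorphism $c_g \otimes \mathbb{R}$ of real homology. Because $c_g$ sends $(g^{-1}\cdot\mathfrak{o})$-positive lattice elements of $\pi_1(\partial M)$ to $\mathfrak{o}$-positive lattice elements of $g\pi_1(\partial M)g^{-1}$, and negatives to negatives, and because the line associated to a left-order of $\mathbb{Z}^2$ in \S \ref{subsec: lo z2} is precisely the line separating its positive from its negative lattice points, the linear isomorphism $c_g\otimes\mathbb{R}$ must carry $L(g^{-1}\cdot\mathfrak{o}; \pi_1(\partial M))$ onto $L(\mathfrak{o}; g\pi_1(\partial M)g^{-1})$. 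Since, by definition, the slope $[L(\mathfrak{o}; g\pi_1(\partial M)g^{-1})] \in \mathcal{S}(M)$ is obtained by transporting $L(\mathfrak{o}; g\pi_1(\partial M)g^{-1})$ back to $H_1(\pi_1(\partial M);\mathbb{R})$ along exactly the isomorphism induced by $c_g$, the two slopes coincide, which is the desired equality.

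There is essentially no serious obstacle here; the content is entirely bookkeeping. The one point requiring care is the direction of the conjugation and of the action: one must confirm that the isomorphism defining $[L(\mathfrak{o}; g\pi_1(\partial M)g^{-1})]$ is conjugation taking $\pi_1(\partial M)$ \emph{into} $g\pi_1(\partial M)g^{-1}$, that is $\beta \mapsto g\beta g^{-1}$, so that it matches the $g^{-1}$ (rather than $g$) appearing in $g^{-1}\cdot\mathfrak{o}$ and the two instances of conjugation by $g$ cancel correctly. I would verify this consistency explicitly before drawing the conclusion.
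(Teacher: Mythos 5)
Your proof is correct and follows essentially the same route as the paper's: the paper likewise records that $\alpha <_{g^{-1}\cdot\mathfrak{o}} \beta \Leftrightarrow g\alpha g^{-1} <_{\mathfrak{o}} g\beta g^{-1}$, deduces the resulting bijection $P(g^{-1}\cdot\mathfrak{o}) \cap \pi_1(\partial M) \to P(\mathfrak{o}) \cap g\pi_1(\partial M)g^{-1}$, $\beta \mapsto g\beta g^{-1}$, and concludes that the conjugation-induced isomorphism of real homology carries $L(g^{-1}\cdot\mathfrak{o};\pi_1(\partial M))$ to $L(\mathfrak{o}; g\pi_1(\partial M)g^{-1})$, so the slopes coincide. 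Your closing caution about the direction of the conjugation is well placed --- the paper's own displayed isomorphism is labelled $\beta\otimes t \mapsto (g^{-1}\beta g)\otimes t$, which is really the formula for the inverse of the arrow as drawn --- but since the identification of slopes is the same either way, this does not affect the argument.
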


\begin{proof} 
Recall that for $\alpha, \beta \in \pi_1(\partial M)$ and $g \in \pi_1(M)$ we have 
$$\alpha <_{g^{-1} \cdot \mathfrak{o}} \beta \; \Leftrightarrow \; g \alpha g^{-1} <_{\mathfrak{o}} g \beta g^{-1}$$ 
It follows that there is a bijection
\begin{center} 
\begin{tikzpicture}[scale=0.8]
\node at (4, 4.5) {$P(g^{-1}  \cdot \mathfrak{o}) \cap \pi_1(\partial M)$};
\node at (14, 4.5) {$P(\mathfrak{o}) \cap g \pi_1(\partial M) g^{-1}$};

\node at (8.5, 4.9) {$\beta \mapsto g \beta g^{-1}$}; 

\draw [ ->] (5.9, 4.5) --(11.9,4.5); 

\end{tikzpicture}
\end{center} 
and therefore $L(g^{-1} \cdot \mathfrak{o}; \pi_1(\partial M))$ maps to $L(\mathfrak{o}; g \pi_1(\partial M) g^{-1})$ under the isomorphism
$$H_1(\pi_1(\partial M); \mathbb R) = \pi_1(\partial M) \otimes \mathbb R \xrightarrow{\;\;\; \beta \otimes t \; \mapsto \; (g^{-1} \beta g) \otimes t \;\;\;} 
(g \pi_1(\partial M) g^{-1}) \otimes \mathbb R = H_1(g \pi_1(\partial M) g^{-1}; \mathbb R)$$ 
Thus $[L(g^{-1} \cdot \mathfrak{o}; \pi_1(\partial M))] = [L(\mathfrak{o}; g \pi_1(\partial M) g^{-1})]$. 
\end{proof} 
Here is an immediate consequence of the last two results.

\begin{corollary}
\label{cor: invariance under action}
If $\mathfrak{o} \in LO(M)$ is boundary-cofinal, then $[L(g \cdot \mathfrak{o}; \pi_1(\partial M))] = [L(\mathfrak{o}; \pi_1(\partial M))]$ for each $g \in \pi_1(M)$. 
\qed
\end{corollary}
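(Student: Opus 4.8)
The plan is to derive this corollary directly by chaining the two immediately preceding results, so the only real work is bookkeeping with the substitution $g \mapsto g^{-1}$ and the conjugation convention. First I would apply Lemma \ref{lemma: variance under action} with $g^{-1}$ substituted for $g$. Since the lemma reads $[L(g^{-1} \cdot \mathfrak{o}; \pi_1(\partial M))] = [L(\mathfrak{o}; g \pi_1(\partial M) g^{-1})]$ for every $g$, replacing $g$ by $g^{-1}$ throughout yields
$$[L(g \cdot \mathfrak{o}; \pi_1(\partial M))] = [L(\mathfrak{o}; g^{-1} \pi_1(\partial M) g)].$$
This step requires no hypothesis on $\mathfrak{o}$; it merely converts the left-translate $g \cdot \mathfrak{o}$ into a statement about the conjugated peripheral subgroup $g^{-1}\pi_1(\partial M)g$.

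Next I would invoke Proposition \ref{prop: bdry cofinal implies invt}, which is exactly where boundary-cofinality of $\mathfrak{o}$ is used. That proposition asserts $[L(\mathfrak{o}; \pi_1(\partial M))] = [L(\mathfrak{o}; h \pi_1(\partial M) h^{-1})]$ for all $h \in \pi_1(M)$; taking $h = g^{-1}$ gives
$$[L(\mathfrak{o}; \pi_1(\partial M))] = [L(\mathfrak{o}; g^{-1} \pi_1(\partial M) g)].$$
Combining the two displayed identities immediately produces $[L(g \cdot \mathfrak{o}; \pi_1(\partial M))] = [L(\mathfrak{o}; \pi_1(\partial M))]$, as desired.

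There is no genuine obstacle here, since the statement is flagged as an immediate consequence. The single point demanding care is the index juggling: one must substitute $g^{-1}$ (not $g$) into Lemma \ref{lemma: variance under action} and correspondingly feed $h = g^{-1}$ into Proposition \ref{prop: bdry cofinal implies invt}, so that the intermediate conjugate $g^{-1}\pi_1(\partial M)g$ matches on both sides. I expect the entire argument to occupy two or three lines once these substitutions are lined up correctly.
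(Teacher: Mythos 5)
Your proposal is correct and follows exactly the paper's intended argument: the paper presents this corollary as an immediate consequence of Proposition \ref{prop: bdry cofinal implies invt} and Lemma \ref{lemma: variance under action}, which is precisely the two-step chaining you carry out, with the substitution $g \mapsto g^{-1}$ handled correctly in both results.
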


\subsection{Order-detection}

Set 
$$LO(M; [\alpha]) = \{\mathfrak{o} \in LO(M) \; | \; [L(g\cdot \mathfrak{o}; \pi_1(\partial M))]  = [\alpha] \hbox{ for all } g \in \pi_1(M)\}$$ 

We have noted elsewhere that the slope map $s: LO(M) \to \mathcal{S}(M), s(\mathfrak{o}) = [L(\mathfrak{o})]$, is continuous and that given $g \in \pi_1(M)$, the map $\mathfrak{o} \mapsto g \cdot \mathfrak{o}$ is a homeomorphism of $LO(M)$.  Hence the identity
$$LO(M; [\alpha]) = \bigcap_{g \in \pi_1(M)} g \cdot (s^{-1}([\alpha]))$$ 
expresses $LO(M; [\alpha])$ as an intersection of closed subsets of $LO(M)$, and it is therefore closed itself. 

\begin{definition}
{\rm A slope $[\alpha]$ on $\partial M$ is {\it order-detected} if $LO(M; [\alpha]) \ne \emptyset$. }
\end{definition}
For instance the slope corresponding to the rational longitude is always order-detected. See \cite[Example 6.3]{BGH1}.

Set 
$$\mathcal{D}_{ord}(M) = \{[\alpha] \in \mathcal{S}(M) \; | \; [\alpha] \hbox{ is order-detected}\}$$ 
and note that $\mathcal{D}_{ord}(M) \subseteq \mathcal{D}_{ord}^{wk}(M)$ follows immediately from the definitions. Moreover, if $\{ \mathfrak{o}_n \}$ is a sequence of orders in $\bigcup_{[\alpha] \in \mathcal{S}(M)} L(M; [\alpha])$ converging to $\mathfrak{o}$ and $g \in \pi_1(M)$, then  
\[ s(g \cdot \mathfrak{o}) = s(g \cdot \lim_n \mathfrak{o}_n) = \lim_n s(g \cdot \mathfrak{o}_n) = \lim_n s(\mathfrak{o}_n) = s(\mathfrak{o}).
\]

It follows that $\bigcup_{[\alpha] \in \mathcal{S}(M)} LO(M; [\alpha])$ is closed in the compact space $LO(M)$, so $\mathcal{D}_{ord}(M)$, its image in $\mathcal{S}(M)$ under $s$, is closed as well.  We record this as a proposition for future use.

\begin{proposition}
\label{prop:Dord is closed}
$\mathcal{D}_{ord}(M)$ is closed in $\mathcal{S}(M)$. 
\qed
\end{proposition}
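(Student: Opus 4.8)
The plan is to realise $\mathcal{D}_{ord}(M)$ as the continuous image under $s$ of a closed subset of the compact space $LO(M)$, and then conclude using that a compact subset of the Hausdorff space $\mathcal{S}(M)\cong S^1$ is closed. Concretely, set $U = \bigcup_{[\alpha]\in\mathcal{S}(M)} LO(M;[\alpha])$. Taking $g=1$ in the defining condition of $LO(M;[\alpha])$ gives $s(\mathfrak{o}) = [L(\mathfrak{o};\pi_1(\partial M))] = [\alpha]$ for every $\mathfrak{o}\in LO(M;[\alpha])$, so $[\alpha]$ is order-detected exactly when $LO(M;[\alpha])\neq\emptyset$, and $s$ carries each nonempty $LO(M;[\alpha])$ onto $\{[\alpha]\}$. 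Hence $\mathcal{D}_{ord}(M) = s(U)$, and once $U$ is shown to be closed the conclusion is formal: $U$ is a closed subset of the compact space $LO(M)$, thus compact, so $s(U)$ is compact, hence closed in $\mathcal{S}(M)$.

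The substantive step is therefore to prove that $U$ is closed. The useful reformulation is
\[ U = \{\mathfrak{o}\in LO(M) \; : \; g\mapsto s(g\cdot\mathfrak{o}) \text{ is constant on } \pi_1(M)\},
\]
which is immediate from the definition of $LO(M;[\alpha])$ (and from $s(\mathfrak{o})=[\alpha]$ when $\mathfrak{o}\in LO(M;[\alpha])$). Since $\pi_1(M)$ is countable, $LO(M)$ is metrisable, so it suffices to verify sequential closedness. I would take $\{\mathfrak{o}_n\}\subset U$ with $\mathfrak{o}_n\to\mathfrak{o}$ and show the limit slope function $g\mapsto s(g\cdot\mathfrak{o})$ is again constant. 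Fixing $g\in\pi_1(M)$ and using that $\mathfrak{o}\mapsto g\cdot\mathfrak{o}$ is a homeomorphism of $LO(M)$ together with the continuity of $s$, I would compute
\[ s(g\cdot\mathfrak{o}) = \lim_n s(g\cdot\mathfrak{o}_n) = \lim_n s(\mathfrak{o}_n) = s(\mathfrak{o}),
\]
the middle equality being precisely the constancy of $g\mapsto s(g\cdot\mathfrak{o}_n)$ supplied by $\mathfrak{o}_n\in U$. As $g$ was arbitrary, $s(g\cdot\mathfrak{o})=s(\mathfrak{o})$ for all $g$, so $\mathfrak{o}\in LO(M;[s(\mathfrak{o})])\subseteq U$, establishing that $U$ is closed.

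I do not expect a deep obstacle here, but the one point that genuinely needs the structure of $U$ is this: an arbitrary union of the closed sets $LO(M;[\alpha])$ has no reason to be closed, so the argument cannot rest on each piece being closed. What saves it is that $U$ is cut out by the single, continuous-in-$\mathfrak{o}$ condition of $g$-invariance of the slope, and this condition passes to limits one group element at a time. The only technical hinge is that the limit in the display must be taken with $g$ held fixed—so that continuity of the homeomorphism $\mathfrak{o}\mapsto g\cdot\mathfrak{o}$ and of $s$ both apply—after which the uniform-in-$g$ conclusion follows for free since $g$ was arbitrary.
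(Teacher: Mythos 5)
Your proposal is correct and follows essentially the same route as the paper: both show the set $\bigcup_{[\alpha]} LO(M;[\alpha])$ is closed via the identical limit computation $s(g\cdot\mathfrak{o}) = \lim_n s(g\cdot\mathfrak{o}_n) = \lim_n s(\mathfrak{o}_n) = s(\mathfrak{o})$, and then conclude that $\mathcal{D}_{ord}(M)$, being the image of this compact set under the continuous slope map $s$, is closed in $\mathcal{S}(M)$. Your added remarks on metrisability (justifying the sequential argument) and on why closedness of the individual pieces $LO(M;[\alpha])$ does not suffice are sound refinements of the same argument.
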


Here is a restatement of Corollary \ref{cor: invariance under action}. 

\begin{proposition} 
\label{prop: cofinal 2}
Suppose that $\mathfrak{o}$ is a boundary-cofinal left-order on $\pi_1(M)$ which weakly order-detects a slope $[\alpha] \in \mathcal{S}(M)$. Then $\mathfrak{o} \in LO(M; [\alpha])$ and therefore $\mathfrak{o}$ order-detects $[\alpha]$. 
\qed
\end{proposition}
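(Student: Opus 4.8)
The plan is to reduce the statement directly to Corollary \ref{cor: invariance under action}, since the proposition is essentially a repackaging of that result in the vocabulary of order-detection. First I would unwind the two relevant definitions. The hypothesis that $\mathfrak{o}$ weakly order-detects $[\alpha]$ means precisely that $[L(\mathfrak{o}; \pi_1(\partial M))] = [\alpha]$, where I am using the identification $L(\mathfrak{o}) = L(\mathfrak{o}; \pi_1(\partial M))$ coming from \S\ref{subsec: wk-ord-det}. On the other side, the desired conclusion $\mathfrak{o} \in LO(M; [\alpha])$ unwinds, by the definition of $LO(M; [\alpha])$, to the requirement that $[L(g \cdot \mathfrak{o}; \pi_1(\partial M))] = [\alpha]$ for \emph{every} $g \in \pi_1(M)$, not merely for $g = 1$.

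The key step is then to invoke boundary-cofinality. Since $\mathfrak{o}$ is boundary-cofinal, Corollary \ref{cor: invariance under action} guarantees that the peripheral slope is unaffected by the $\pi_1(M)$-action on $LO(M)$, that is, $[L(g \cdot \mathfrak{o}; \pi_1(\partial M))] = [L(\mathfrak{o}; \pi_1(\partial M))]$ for all $g \in \pi_1(M)$. Combining this equality with the hypothesis $[L(\mathfrak{o}; \pi_1(\partial M))] = [\alpha]$ yields $[L(g \cdot \mathfrak{o}; \pi_1(\partial M))] = [\alpha]$ for all $g$, which is exactly the membership criterion for $LO(M; [\alpha])$. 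Hence $\mathfrak{o} \in LO(M; [\alpha])$, and in particular $LO(M; [\alpha]) \neq \emptyset$, so by definition $[\alpha]$ is order-detected with $\mathfrak{o}$ itself witnessing the detection.

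I do not anticipate any genuine obstacle here, as this proposition is flagged as a restatement: all of the analytic content — namely the dynamical argument that cofinal peripheral elements act fixed-point freely and thus keep their $\mathfrak{o}$-sign under conjugation — has already been absorbed into Proposition \ref{prop: bdry cofinal implies invt}, Lemma \ref{lemma: variance under action}, and Corollary \ref{cor: invariance under action}. The only point requiring care is notational bookkeeping: one must confirm that the ``weak-detection'' line $L(\mathfrak{o})$ of \S\ref{subsec: wk-ord-det} coincides with the peripheral line $L(\mathfrak{o}; \pi_1(\partial M))$ appearing in the corollary, after which the argument is a one-line application.
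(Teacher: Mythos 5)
Your proposal is correct and matches the paper exactly: the paper presents Proposition \ref{prop: cofinal 2} as an immediate restatement of Corollary \ref{cor: invariance under action}, with no further argument, and your unwinding of the definitions of weak order-detection and $LO(M; [\alpha])$ followed by the application of that corollary is precisely the intended reasoning.
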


Referring to Example \ref{example: torus knot cofinality} we see that $\mathcal{D}_{ord}(M) = \mathcal{D}_{ord}^{wk}(M)$ when $M$ is a torus knot exterior. 
A similar conclusion follows from Theorem \ref{thm: some not weak implies all cofinal} and Proposition \ref{prop: cofinal 2} whenever $\mathcal{D}_{ord}^{wk}(M) \ne \mathcal{S}(M)$, thus proving Theorem \ref{thm: some not weak means weak detd = detd}. 

{\bf Theorem \ref{thm: some not weak means weak detd = detd}.}
{\it If $\mathcal{D}_{ord}^{wk}(M) \ne  \mathcal{S}(M)$, then $\mathcal{D}_{ord}(M) = \mathcal{D}_{ord}^{wk}(M)$. }
\qed

\subsection{Order-detection and gluing} 

Here we prove a gluing theorem for left-orders on knot manifolds and use it to provide an alternate 
characterisation of the order-detection of rational slopes (cf. Theorem \ref{thm: topological def of order-detection}). 

For a slope $[\alpha]$ on $\partial M$, let $LO(\partial M; [\alpha])$ denote the set of left-orders $\mathfrak{o}$ on $\pi_1(\partial M)$ for which $[L(\mathfrak{o})] = [\alpha]$ and recall from \S \ref{subsec: lo z2} that if $[\alpha]$ is irrational and $\mathfrak{o} \in LO(\partial M; [\alpha])$, then 
$$LO(\partial M; [\alpha]) = \{\mathfrak{o}, \mathfrak{o}^{op}\}$$
while if $[\alpha]$ is rational and $\mathfrak{o} \in LO(\partial M; [\alpha])$, there is a left-order $\mathfrak{o}^* \in LO(\partial M)$ such that 
$$LO(\partial M; [\alpha]) = \{\mathfrak{o}, \mathfrak{o}^{op}, \mathfrak{o}^*, (\mathfrak{o}^*)^{op}\}$$

\begin{lemma}
\label{lemma: det +}
Suppose that $\mathfrak{o}$ order-detects a slope $[\alpha]$. 

$(1)$ If $[\alpha]$ is an irrational slope and $\mathcal{N}$ is the normal family 
$\mathcal{O}(\mathfrak{o}) \cup \mathcal{O}(\mathfrak{o}^{op}) \subset LO(M; [\alpha]), $ 
then the restriction map $\mathcal{N} \to LO(\partial M; [\alpha])$ is surjective.

$(2)$ If $[\alpha]$ is a rational slope, there are left-orders $\hat{\mathfrak{o}}, \hat{\mathfrak{o}}' \in LO(M; [\alpha])$, where 
$\hat{\mathfrak{o}}\in \overline{\mathcal{O}(\mathfrak{o})}$, $\hat{\mathfrak{o}}'$ differs from $\hat{\mathfrak{o}}$ by a convex swap, and $\hat{\mathfrak{o}}'|_{\pi_1(\partial M)} = (\hat{\mathfrak{o}}|_{\pi_1(\partial M)})^*$. Consequently, if $\mathcal{N}$ is the normal family 
$\mathcal{O}(\hat{\mathfrak{o}})  \cup \mathcal{O}(\hat{\mathfrak{o}}')  \cup \mathcal{O}(\hat{\mathfrak{o}}^{op})  \cup  \mathcal{O}((\hat{\mathfrak{o}}')^{op}) \subset LO(M; [\alpha]),$ then 
the restriction map $\mathcal{N} \to LO(\partial M; [\alpha])$ 
is surjective.
\end{lemma}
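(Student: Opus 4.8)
The plan is to treat both parts after recording two structural facts about $LO(M;[\alpha])$ that are already available: it is $\pi_1(M)$-invariant, being the intersection $\bigcap_{g}g\cdot s^{-1}([\alpha])$, and it is closed under taking opposites, since $g\cdot\mathfrak{o}^{op}=(g\cdot\mathfrak{o})^{op}$ and an order and its opposite determine the same line. In particular, if $\mathfrak{o}$ order-detects $[\alpha]$, then the orbits $\mathcal{O}(\mathfrak{o})$, $\mathcal{O}(\mathfrak{o}^{op})$, and the closure $\overline{\mathcal{O}(\mathfrak{o})}$ all lie in $LO(M;[\alpha])$. Writing $\mathfrak{o}_0=\mathfrak{o}|_{\pi_1(\partial M)}$, this settles part $(1)$ immediately: for irrational $[\alpha]$ we have $LO(\partial M;[\alpha])=\{\mathfrak{o}_0,\mathfrak{o}_0^{op}\}$ by \S\ref{subsec: lo z2}, and $\mathfrak{o}\mapsto\mathfrak{o}_0$, $\mathfrak{o}^{op}\mapsto\mathfrak{o}_0^{op}$ show that restriction from $\mathcal{N}=\mathcal{O}(\mathfrak{o})\cup\mathcal{O}(\mathfrak{o}^{op})$ is onto.

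For part $(2)$ I would first produce $\hat{\mathfrak{o}}$ and the convex subgroup needed for the swap. Since $\mathfrak{o}$ in particular weakly detects the rational slope $[\alpha]$, Proposition \ref{prop: wk ord-det means convex} supplies a left-order $\hat{\mathfrak{o}}\in\overline{\mathcal{O}(\mathfrak{o})}\subseteq LO(M;[\alpha])$ together with an $\hat{\mathfrak{o}}$-convex subgroup $C\leq\pi_1(M)$ satisfying $C\cap\pi_1(\partial M)=\langle\alpha\rangle$. Note that $\langle\alpha\rangle=L_0$ is precisely the line of $\hat{\mathfrak{o}}_0:=\hat{\mathfrak{o}}|_{\pi_1(\partial M)}$, since $\hat{\mathfrak{o}}\in LO(M;[\alpha])$. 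I then define $\hat{\mathfrak{o}}'$ by reversing $\hat{\mathfrak{o}}$ on $C$, i.e. via the positive cone $P((\hat{\mathfrak{o}}|_C)^{op})\sqcup(P(\hat{\mathfrak{o}})\setminus C)$; Proposition \ref{prop: still convex} guarantees that this is a left-order differing from $\hat{\mathfrak{o}}$ by a convex swap on $C$, and that $C$ remains $\hat{\mathfrak{o}}'$-convex. Because $\hat{\mathfrak{o}}'$ agrees with $\hat{\mathfrak{o}}$ off $C$ while $C\cap\pi_1(\partial M)=L_0$, the restriction $\hat{\mathfrak{o}}'|_{\pi_1(\partial M)}$ reverses $\hat{\mathfrak{o}}_0$ exactly on $L_0$ and leaves it unchanged off $L_0$, which is the definition of $(\hat{\mathfrak{o}}_0)^*$ from \S\ref{subsec: lo z2}.

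The main obstacle is to check that $\hat{\mathfrak{o}}'$ genuinely lies in $LO(M;[\alpha])$, since it need not lie in $\overline{\mathcal{O}(\mathfrak{o})}$. Fix $g\in\pi_1(M)$; I must show $[L(g\cdot\hat{\mathfrak{o}}';\pi_1(\partial M))]=[\alpha]$. The orders $g\cdot\hat{\mathfrak{o}}'$ and $g\cdot\hat{\mathfrak{o}}$ agree off $gCg^{-1}$ and are opposite on $gCg^{-1}$, and $gCg^{-1}$ is $(g\cdot\hat{\mathfrak{o}})$-convex, so $gCg^{-1}\cap\pi_1(\partial M)$ is a convex subgroup of $\pi_1(\partial M)$ for $(g\cdot\hat{\mathfrak{o}})|_{\pi_1(\partial M)}$. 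As $g\cdot\hat{\mathfrak{o}}\in LO(M;[\alpha])$, this restricted order has rational line $[\alpha]$, so by the description of convex subgroups of $\mathbb{Z}^2$ in \S\ref{subsec: lo z2} the intersection $gCg^{-1}\cap\pi_1(\partial M)$ equals $\{1\}$, $\langle\alpha\rangle$, or all of $\pi_1(\partial M)$. In the first two cases the swap reverses only elements lying on the line $L_0$, so the line, hence the slope, is unchanged; in the third case $(g\cdot\hat{\mathfrak{o}}')|_{\pi_1(\partial M)}=\big((g\cdot\hat{\mathfrak{o}})|_{\pi_1(\partial M)}\big)^{op}$, which determines the same line. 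In every case the detected slope is $[\alpha]$, so $\hat{\mathfrak{o}}'\in LO(M;[\alpha])$; this trichotomy is what makes the argument work without any boundary-cofinality hypothesis.

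Finally, the four orders $\hat{\mathfrak{o}},\hat{\mathfrak{o}}',\hat{\mathfrak{o}}^{op},(\hat{\mathfrak{o}}')^{op}$ all lie in $LO(M;[\alpha])$ by the opposite-closure recorded above, so their orbits constitute a normal family $\mathcal{N}\subseteq LO(M;[\alpha])$, and under restriction they map to $\hat{\mathfrak{o}}_0$, $(\hat{\mathfrak{o}}_0)^*$, $(\hat{\mathfrak{o}}_0)^{op}$, and $((\hat{\mathfrak{o}}_0)^*)^{op}$ respectively. By \S\ref{subsec: lo z2} these four orders exhaust $LO(\partial M;[\alpha])$, so the restriction map $\mathcal{N}\to LO(\partial M;[\alpha])$ is surjective, completing part $(2)$.
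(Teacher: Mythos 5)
Your proof is correct and takes essentially the same approach as the paper's: Proposition \ref{prop: wk ord-det means convex} supplies $\hat{\mathfrak{o}}$ and the convex subgroup $C$, Proposition \ref{prop: still convex} gives the convex swap defining $\hat{\mathfrak{o}}'$, and the same trichotomy argument shows $\hat{\mathfrak{o}}' \in LO(M; [\alpha])$. The only cosmetic difference is that you phrase the trichotomy in terms of $gCg^{-1} \cap \pi_1(\partial M)$ and the restriction of $g \cdot \hat{\mathfrak{o}}$ to $\pi_1(\partial M)$, whereas the paper uses $C \cap g\pi_1(\partial M)g^{-1}$ and restrictions to conjugate peripheral subgroups; these formulations are equivalent via Lemma \ref{lemma: variance under action}.
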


\begin{proof}
Assertion (1) is obvious, so we need only deal with (2). 

Suppose that $[\alpha]$ is a rational slope and, without loss of generality, that $\alpha \in H_1(\partial M)$ is primitive. By Proposition \ref{prop: wk ord-det means convex} there is a left-order $\hat{\mathfrak{o}}  \in \overline{\mathcal{O}(\mathfrak{o})} \subset LO(M; [\alpha])$ and an $\hat{\mathfrak{o}}$-convex subgroup $C$ of $\pi_1(M)$ for which $C \cap \pi_1(\partial M) = \langle \alpha \rangle$. Since $C$ is $\hat{\mathfrak{o}}$-convex, we can apply Proposition \ref{prop: still convex}(2) to define $\hat{\mathfrak{o}}'$ by 
$$P(\hat{\mathfrak{o}}') = (P(\hat{\mathfrak{o}}) \cap C)^{-1} \cup (P(\hat{\mathfrak{o}}) \setminus C)$$
Then $C$ is $\hat{\mathfrak{o}}'$-convex and as $C \cap \pi_1(\partial M) = \langle \alpha \rangle$ we have 
$$\hat{\mathfrak{o}}'|_{\pi_1(\partial M)} = (\hat{\mathfrak{o}}|_{\pi_1(\partial M)})^*$$
(cf. \S \ref{subsec: lo z2}). To complete the proof it suffices to show that $\hat{\mathfrak{o}}' \in LO(M; [\alpha])$. That is, we must show $[L(\hat{\mathfrak{o}}'; g \pi_1(\partial M) g^{-1})]  = [\alpha]$ for each $g \in \pi_1(M)$ (Lemma \ref{lemma: variance under action}). 

If $g \in \pi_1(M)$, then $C \cap g \pi_1(\partial M) g^{-1}$ is a relatively convex subgroup of $g \pi_1(\partial M) g^{-1}$ and so we have three possibilities: 
 
\begin{itemize}

\item $C \cap g \pi_1(\partial M) g^{-1} = \{ 1 \}$ and therefore $P(\hat{\mathfrak{o}}') \cap g \pi_1(\partial M) g^{-1} = P(\hat{\mathfrak{o}}) \cap g \pi_1(\partial M) g^{-1}$;

\vspace{.2cm} \item $C \cap g \pi_1(\partial M) g^{-1} \cong \mathbb{Z}$ is a direct summand of $\pi_1(\partial M)$ and therefore, $C \cap g \pi_1(\partial M) g^{-1} = \langle g \alpha g^{-1}\rangle$ since $\hat{\mathfrak{o}}$ detects $[\alpha]$;

\vspace{.2cm} \item $C \cap g \pi_1(\partial M) g^{-1} = g \pi_1(\partial M) g^{-1}$ and therefore $P(\hat{\mathfrak{o}}') \cap g \pi_1(\partial M) g^{-1} = P(\hat{\mathfrak{o}})^{-1} \cap g \pi_1(\partial M) g^{-1} = P(\hat{\mathfrak{o}}^{op}) \cap g \pi_1(\partial M) g^{-1} $.

\end{itemize}
In the first of the three cases it is clear that $[L(\hat{\mathfrak{o}}'; g \pi_1(\partial M) g^{-1})]  = [L(\hat{\mathfrak{o}}; g \pi_1(\partial M) g^{-1})] = [\alpha]$, while in the third we have $[L(\hat{\mathfrak{o}}'; g \pi_1(\partial M) g^{-1})] = [L(\hat{\mathfrak{o}}^{op}; g \pi_1(\partial M) g^{-1})] = [L(\hat{\mathfrak{o}}; g \pi_1(M) g^{-1})] = [\alpha]$. Finally, in the second case we see that $\langle g \alpha g^{-1}\rangle$ is convex in $g \pi_1(\partial M) g^{-1}$ with respect to both $\hat{\mathfrak{o}}$ and $\hat{\mathfrak{o}}'$ and therefore $[L(\hat{\mathfrak{o}}'; g \pi_1(\partial M) g^{-1})]  = [L(\hat{\mathfrak{o}}; g \pi_1(\partial M) g^{-1})] = [\alpha]$. This completes the proof. 
\end{proof}

{\bf Theorem \ref{thm: gluing via detection}.}
{\it Suppose that $M_1$ and $M_2$ are knot manifolds and $W = M_1 \cup_f M_2$, where $f: \partial M_1 \xrightarrow{\; \cong \;} \partial M_2$ identifies slopes $[\alpha_1] \in \mathcal{D}_{ord}(M_1)$ on $\partial M_1$ and $[\alpha_2] \in \mathcal{D}_{ord}(M_2)$ on $\partial M_2$. Then $\pi_1(W)$ is left-orderable. 
}

\begin{proof}
According to the Bludov-Glass theorem \cite{BG}, $\pi_1(W)$ is left-orderable if and only if there are normal families of left-orderings $\mathcal{N}_1 \subset LO(M_1)$ and $\mathcal{N}_2 \subset LO(M_2)$ which are compatible with the gluing map $f: \partial M_1 \to \partial M_2$ in the sense that $f$ induces a bijection
\vspace{-.2cm} 
\begin{center} 
\begin{tikzpicture}[scale=0.8]
\node at (6, 4.5) {$\{ \mathfrak{o}|_{\pi_1(\partial M_1)}\; | \; \mathfrak{o} \in \mathcal{N}_1 \} $};
\node at (12, 4.5) {$\{ \mathfrak{o}|_{\pi_1(\partial M_2)} \; | \;\mathfrak{o} \in \mathcal{N}_2 \}$};

\node at (9, 5) {$f$}; 
\node at (9, 4.2) {$\cong$}; 

\draw [ <->] (8.05, 4.5) --(9.9,4.5); 

\end{tikzpicture}
\end{center} 
If we take $\mathcal{N}_1 \subset LO(M_1; [\alpha_1])$ and $\mathcal{N}_2 \subset LO(M_2; [\alpha_2])$ to be the normal families guaranteed by Lemma \ref{lemma: det +}, then the restriction maps $\mathcal{N}_i \to LO(\partial M_i; [\alpha_i])$, $i = 1, 2$, are surjective, and therefore as $f$ identifies $[\alpha_1] $ and $[\alpha_2]$,  the Bludov-Glass condition holds. (Note that $[\alpha_1]$ and $[\alpha_2]$ are either both rational or both irrational.) Thus $\pi_1(W)$ is left-orderable. 
\end{proof}

Here is a partial converse.  Given $3$-manifolds $M_1$ and $M_2$ with torus boundaries, and a homeomorphism $f: \partial M_1 \to \partial M_2$, we use $M_1 \cup_f M_2$ to denote the manifold obtained by attaching $M_1$ to $M_2$ via $f$.

{\bf Theorem \ref{thm: gluing via detection 2}}
{\it Suppose that $M_1$ and $M_2$ are knot manifolds such that every left-order on $\pi_1(M_2)$ is boundary-cofinal. Then $W = M_1 \cup_f M_2$ has a left-orderable fundamental group if and only if $f: \partial M_1 \xrightarrow{\; \cong \;} \partial M_2$ identifies slopes $[\alpha_1] \in \mathcal{D}_{ord}(M_1)$ on $\partial M_1$ and $[\alpha_2] \in \mathcal{D}_{ord}(M_2)$ on $\partial M_2$. 
}

\begin{proof}
The reverse direction is Theorem \ref{thm: gluing via detection}. For the forward direction, fix a left-order $\mathfrak{o} \in LO(W)$ and let $\mathfrak{o}_i$ be the restriction of $\mathfrak{o}$ to $\pi_1(M_i)$. Since $\mathfrak{o}_2$ is boundary-cofinal, so is $\mathfrak{o}_1$. Proposition \ref{prop: cofinal 2} then shows that $\mathfrak{o}_i \in LO(M_i, [L(\mathfrak{o}_i; \pi_1(\partial M_i))])$ and therefore $[L(\mathfrak{o}_i; \pi_1(\partial M_i))] \in \mathcal{D}_{ord}(M_i)$ for $i = 1, 2$. 
\end{proof}

Let $N$ be a twisted $I$-bundle over the Klein bottle with rational longitude $\lambda_N$.  The next theorem provides an alternative characterisation of order detection of rational slopes.

{\bf Theorem \ref{thm: topological def of order-detection}.}  
{\it Suppose that $M$ is boundary-irreducible, $[\alpha]$ is a rational slope on $\partial M$, and $f: \partial N \to \partial M$ is a homeomorphism which identifies $[\lambda_N]$ with $[\alpha]$. Then $[\alpha]$ is order-detected if and only if $\pi_1(M \cup_f N)$ is left-orderable. 
}

\begin{proof}
All left-orderings of $\pi_1(N)$ arise lexicographically from the short exact sequence
$$1 \rightarrow \langle y \rangle \rightarrow \pi_1(N) =  \langle x, y | xyx^{-1} = y^{-1} \rangle \rightarrow \langle x \rangle \rightarrow 1$$
(see \cite[Example 2.17]{CR} for instance) and so may be listed as $\{ \mathfrak{o},  \mathfrak{o}(x),  \mathfrak{o}(y),  \mathfrak{o}(x,y) \}$, where a generator appears in the parentheses if and only if it is positive with respect to that ordering. The element $y \in \pi_1(N)$ is peripheral and primitive, representing the rational longitude of $N$, and $\langle y \rangle$ is convex with respect to every order. Thus $[L({\mathfrak{o}'})] = [\lambda_{N}]$ for each left-order on $\pi_1(N)$ and the restriction map induces a bijection $LO(N) \to LO(\pi_1(\partial N); [\lambda_{N}])$. Theorem \ref{thm: gluing via detection} then shows that $\pi_1(M \cup_f N)$ is left-orderable. 

Conversely suppose that $M \cup_f N$ has a left-orderable fundamental group and let $\mathcal{N}_1, \mathcal{N}_2$ be normal families of left-orders as guaranteed by the Bludov-Glass theorem. As noted above, each left-order in $\mathcal{N}_2$ detects $f_*([\lambda_{N}]) = [\alpha]$ and therefore $[L({g \cdot \mathfrak{o}})] = [\alpha]$ for each $\mathfrak{o} \in \mathcal{N}_1$ and $g \in \pi_1(M)$. Thus $[\alpha] \in \mathcal{D}_{ord}(M)$. 
\end{proof}

\begin{remark}
\label{rem: logst}
We could replace $N$ in this theorem by any $LO$-generalised solid torus. In other words by a knot manifold $M$ whose rational longitude is the only order-detected slope on $\partial M$. Examples include the knot manifolds $N_t$ of \cite[\S 2.2.3]{BC} ($t \geq 2$), any Seifert fibred knot manifold with base orbifold the Mobius band with cone points \cite[Proposition 4.7]{BC}, and the hyperbolic knot manifold $v2503$ (\cite[Proposition 7.2]{BGH1}).
\end{remark}

\subsection{Extensions of the gluing theorem} 
\label{subsec: general gluings} 

The results of the previous section can be extended in several useful ways which require only minor adjustments to the proofs. 

First note that if $T$ is an incompressible torus in the boundary of a $3$-manifold $M$, the restriction of each left-order $\mathfrak{o} \in LO(M)$ to $\pi_1(T)$ weakly detects (i.e. determines) a slope on $T$ as in \S \ref{subsec: wk-ord-det}. The set of all such slopes is denoted by $\mathcal{D}_{ord}^{wk}(M, T)$, while $\mathcal{D}_{ord}(M, T)$ denotes the set of slopes $[\alpha]$ on $T$ for which there is some $\mathfrak{o} \in LO(M)$ such that $\gamma \cdot \mathfrak{o}$ weakly detects $[\alpha]$ for all $\gamma \in \pi_1(M)$. 

The proof of Theorem \ref{thm: gluing via detection} is easily modified to this more general situation: Suppose that $M_1$ and $M_2$ are $3$-manifolds and $T_1 \subseteq \partial M_1, T_2 \subseteq \partial M_2$ are incompressible tori. If $W = M_1 \cup_f M_2$, where $f: T_1 \xrightarrow{\; \cong \;} T_2$ identifies slopes $[\alpha_1] \in \mathcal{D}_{ord}(M_1, T_1)$ on $\partial M_1$ and $[\alpha_2] \in \mathcal{D}_{ord}(M_2, T_2)$ on $\partial M_2$, then $\pi_1(W)$ is left-orderable. With applications in mind, we present a more refined version of this result. To state it, we introduce multislopes and their order-detection.

Suppose that $M$ is a compact, connected, orientable, irreducible $3$-manifold whose boundary is a union $\partial M = T_1 \sqcup T_2 \sqcup \cdots \sqcup T_r$ of incompressible tori. Each $\mathfrak{o} \in LO(M)$ determines a multislope 
$$[L(\mathfrak{o})] = ([L(\mathfrak{o}|_{\pi_1(T_1)})], \ldots , [L(\mathfrak{o}|_{\pi_1(T_r)})]) \in \mathcal{S}(M) = \mathcal{S}(T_1) \times \mathcal{S}(T_2) \times \cdots \times \mathcal{S}(T_r) \cong (S^1)^r.$$
We say that $\mathfrak{o}$ weakly order-detects $[L(\mathfrak{o})]$ and denote the set of weakly order-detected multislopes by $\mathcal{D}_{ord}^{wk}(M)$. We say that $\mathfrak{o}$ order-detects $[L(\mathfrak{o})]$ if $\gamma \cdot \mathfrak{o}$ weakly order-detects it for each $\gamma \in \pi_1(M)$, and denote the set of order-detected multislopes by $\mathcal{D}_{ord}(M)$.

\begin{theorem}
\label{thm: general gluing}
Suppose that $M_1$ and $M_2$ are $3$-manifolds and that $\partial M_i$ is a union of incompressible tori $T_{i1} \sqcup T_{i2} \sqcup \ldots \sqcup T_{ir_i}$. Suppose, moreover, that $\mathfrak{o}_i$ is a left-order on $\pi_1(M_i)$ which order-detects a multislope $([\alpha_{i1}], [\alpha_{i2}], \ldots, [\alpha_{ir_i}])$, and that $f: T_{11} \to T_{21}$ is a homeomorphism which identifies $[\alpha_{11}]$ with $[\alpha_{21}]$. Then there is a left-order on $\pi_1(M_1 \cup_f M_2)$ which order-detects the multislope $([\alpha_{12}], [\alpha_{13}], \ldots, [\alpha_{1r_1}], [\alpha_{22}], [\alpha_{23}], \ldots, [\alpha_{2r_2}])$ on $\partial(M_1 \cup_f M_2)$. 
\end{theorem}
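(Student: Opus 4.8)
The plan is to run the amalgam/Bludov--Glass argument of Theorem \ref{thm: gluing via detection}, now bookkeeping the boundary tori that are not glued. Writing $T = T_{11}$ and identifying it with $T_{21}$ via $f$, incompressibility of the tori gives the decomposition $\pi_1(M_1 \cup_f M_2) = \pi_1(M_1) *_{\pi_1(T)} \pi_1(M_2)$. By the Bludov--Glass theorem \cite{BG} it suffices to produce normal families $\mathcal{N}_1 \subseteq LO(M_1)$ and $\mathcal{N}_2 \subseteq LO(M_2)$ whose restrictions to $\pi_1(T_{11})$ and $\pi_1(T_{21})$ are matched by $f$, and to arrange in addition that every order in $\mathcal{N}_i$ order-detects the remaining slopes $[\alpha_{ij}]$, $j \neq 1$.

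First I would construct the families. Applying the generalisation of Lemma \ref{lemma: det +} to the gluing torus $T_{i1}$ --- that is, using $\mathfrak{o}_i$ together with the convex subgroup $C_i$ of $\pi_1(M_i)$ furnished by the multi-torus version of Proposition \ref{prop: wk ord-det means convex}, which satisfies $C_i \cap \pi_1(T_{i1}) = \langle \alpha_{i1}\rangle$ when $[\alpha_{i1}]$ is rational --- produces orbits and convex swaps whose restrictions to $\pi_1(T_{i1})$ exhaust $LO(\partial T_{i1}; [\alpha_{i1}])$ (just $\{\mathfrak{o}_i, \mathfrak{o}_i^{op}\}$ in the irrational case). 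Since $f$ identifies $[\alpha_{11}]$ with $[\alpha_{21}]$ and these are of the same rationality type, the matching condition on the gluing torus holds exactly as in Theorem \ref{thm: gluing via detection}.

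The first genuinely new point is to check that the convex swaps used to build $\mathcal{N}_i$ do not disturb detection on the non-glued tori. This is the trichotomy already present in the proof of Lemma \ref{lemma: det +}, applied now to each $T_{ij}$ with $j\neq1$: for $g \in \pi_1(M_i)$ the intersection $C_i \cap g\pi_1(T_{ij})g^{-1}$ is relatively convex in $g\pi_1(T_{ij})g^{-1} \cong \mathbb{Z}^2$, hence is trivial, all of $g\pi_1(T_{ij})g^{-1}$, or --- when $[\alpha_{ij}]$ is rational --- the rank-one subgroup $\langle g\alpha_{ij}g^{-1}\rangle$ singled out by the detected line. In each case the swap leaves $\mathfrak{o}_i$ unchanged on $g\pi_1(T_{ij})g^{-1}$, replaces it by its opposite, or swaps only along the detected direction, and none of these alters the detected slope $[\alpha_{ij}]$. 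Thus, by Proposition \ref{prop: still convex} and Lemma \ref{lemma: variance under action}, every order in $\mathcal{N}_i$ order-detects the full multislope of $M_i$.

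Finally, Bludov--Glass yields a left-order $\mathfrak{o}$ on $\pi_1(W)$, and I would verify that it order-detects the claimed multislope on $\partial W$. The key is that the tree-theoretic Bludov--Glass order has the structural property that, for every $g \in \pi_1(W)$, the restriction of $g^{-1}\cdot\mathfrak{o}$ to the vertex group $\pi_1(M_i)$ again lies in $\mathcal{N}_i$. Granting this, fix a non-glued torus $T_{ij}$ and $g \in \pi_1(W)$; since $g\pi_1(T_{ij})g^{-1} \subseteq g\pi_1(M_i)g^{-1}$, Lemma \ref{lemma: variance under action} gives $[L(\mathfrak{o}; g\pi_1(T_{ij})g^{-1})] = [L((g^{-1}\cdot\mathfrak{o})|_{\pi_1(M_i)}; \pi_1(T_{ij}))]$, and the right-hand slope is $[\alpha_{ij}]$ because $(g^{-1}\cdot\mathfrak{o})|_{\pi_1(M_i)} \in \mathcal{N}_i$ detects it. Hence $\gamma\cdot\mathfrak{o}$ weakly detects the multislope for every $\gamma \in \pi_1(W)$, as required. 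I expect this last step --- verifying that \emph{every} $\pi_1(W)$-conjugate of a vertex group carries a family order, rather than merely the factor-conjugates built into the definition of a normal family --- to be the main obstacle, since it is exactly the passage from detection inside the pieces to detection in the glued manifold; it should follow from the explicit form of the Bludov--Glass order on the Bass--Serre tree.
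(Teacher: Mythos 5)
Your proposal is correct and follows essentially the same route as the paper's proof: normal families built from orbits and convex swaps as in Lemma \ref{lemma: det +}, the trichotomy argument showing the swaps do not disturb detection on the non-glued tori, and the structural property of the Bludov--Glass order restricted to conjugates of the vertex groups. The step you flag as the main obstacle is handled in the paper exactly as you anticipate, by citing the explicit construction in the proof of \cite[Theorem A]{BG}; the only other differences are minor (the paper separately dispatches the degenerate case where some $M_i$ is a product on $T_{i1}$, and gives an explicit continuity argument that orders in $\overline{\mathcal{O}(\mathfrak{o}_i)}$ still order-detect the full multislope).
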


\begin{proof}
The proof proceeds as in that of Theorem \ref{thm: gluing via detection}, though with a more careful application of the Bludov-Glass theorem. 

The result is obvious if $M_i$ is a product on $T_{i1}$, for in this case $r_i = 2$ and $[\alpha_{i2}]$ is identified with $[\alpha_{i1}]$ under the identifications $T_{ij} \equiv T_{i1} \times \{j\}$. Assume then that neither $M_i$ is a product on $T_{i1}$. Equivalently, $\pi_1(T_{i1})$ is a proper subgroup of $\pi_1(M_i)$ for both $i$, which implies that if conjugation by $\gamma \in \pi_1(M_1 \cup_f M_2)$ stabilises $\pi_1(M_i)$, then $\gamma \in \pi_1(M_i)$.  

 If $[\alpha_{11}]$ is an irrational slope, the Bludov-Glass theorem is applied (in the proof of Theorem \ref{thm: gluing via detection}) with respect to the normal families $\mathcal{N}_i = \mathcal{O}(\mathfrak{o}_i) \cup \mathcal{O}(\mathfrak{o}_i^{op})$ on $\pi_1(M_i)$, $i = 1, 2$. Then for each $\gamma \in \pi_1(M_1 \cup_f M_2)$, the resulting left-order $\mathfrak{o}$ of $\pi_1(M_1 \cup_f M_2)$ restricts to a left-order on each conjugate $\gamma \pi_1(M_i) \gamma^{-1}$  of $\pi_1(M_i)$ in $\pi_1(M_1 \cup_f M_2)$ which corresponds to a left-order in $\mathcal{N}_i$ under the isomorphism $\pi_1(M_i) \to \gamma \pi_1(M_i) \gamma^{-1}, x \mapsto \gamma x \gamma^{-1}$ (see \cite[Proof of Theorem A]{BG}\footnote{In particular, this fact is implied by the sentence ``By the construction, each $\alpha$-order on $L$ ($\alpha \in \Gamma^\#$) induces orders on $G_1$ and $G_2$ belonging to $\mathcal{R}_1$ and $\mathcal{R}_2$, respectively" appearing in the proof of Theorem A on page 596.}). 
 
 As this isomorphism is well-defined up to conjugation in $\pi_1(M_i)$ and each element of $\mathcal{N}_i$ order-detects $([\alpha_{i1}], [\alpha_{i2}], \ldots, [\alpha_{ir_i}])$,  $\mathfrak{o}$ order-detects $([\alpha_{12}], [\alpha_{13}], \ldots, [\alpha_{1r_1}], [\alpha_{22}], \ldots, [\alpha_{2r_2}])$, as claimed.  

A similar argument holds when $[\alpha_{11}]$ is a rational slope, where the Bludov-Glass theorem is applied with respect to normal families 
$\mathcal{N}_i = \mathcal{O}(\hat{\mathfrak{o}}_i)  \cup \mathcal{O}(\hat{\mathfrak{o}}_i')  \cup \mathcal{O}(\hat{\mathfrak{o}}_i^{op})  \cup  \mathcal{O}((\hat{\mathfrak{o}}_i')^{op})$, $\hat{\mathfrak{o}}_i \in \overline{\mathcal{O}(\mathfrak{o}_i)}$, $\hat{\mathfrak{o}}_i'$ differs from $\hat{\mathfrak{o}}_i$ by a convex swap, and $\hat{\mathfrak{o}}_i'|_{\pi_1(T_{i1})} = (\hat{\mathfrak{o}}_i|_{\pi_1(T_{i1})})^*$.  We first observe that the orderings $\hat{\mathfrak{o}}_i, \hat{\mathfrak{o}}_i'$ provided to us by Lemma \ref{lemma: det +} order-detect  $([\alpha_{i1}], [\alpha_{i2}], \ldots, [\alpha_{ir_i}])$, and the same holds for $\hat{\mathfrak{o}}_i^{op}, (\hat{\mathfrak{o}}_i')^{op}$.  

In the proof of Lemma \ref{lemma: det +}, we first arrive at $\hat{\mathfrak{o}}_i$ from $\mathfrak{o}_i$ by an application of Proposition \ref{prop: wk ord-det means convex}, which implies that $\hat{\mathfrak{o}}_i \in \overline{\mathcal{O}(\mathfrak{o}_i)}$.   The slope map $s_{ij} : LO(M_i) \rightarrow \mathcal{S}_j(M_i)$, where $\mathcal{S}_j(M_i) = H_1(T_{ij}; \mathbb{R}) / \{ \pm 1\}$, is continuous and constant on $\mathcal{O}(\mathfrak{o}_i)$ by assumption.  Moreover $\mathcal{O}(\mathfrak{o}_i)$ is invariant under the action of $\pi_1(M_i)$ on $LO(M_i)$, from which we conclude that $s_{ij}(g \cdot \hat{\mathfrak{o}}_i) = s_{ij}(\mathfrak{o}_i)$ for all $g \in \pi_1(M_i)$.  Thus $ \hat{\mathfrak{o}}_i$ order-detects $([\alpha_{i1}], [\alpha_{i2}], \ldots, [\alpha_{ir_i}])$.   To show the same is true of $\hat{\mathfrak{o}}_i'$ we can proceed as in the proof of Lemma \ref{lemma: det +}.

From here, the reasoning in the rational case of the proof of Theorem \ref{thm: gluing via detection} applies, and as in the previous paragraph, this implies that the order $\mathfrak{o}$ arising from an application of the Bludov-Glass theorem order-detects $([\alpha_{12}], [\alpha_{13}], \ldots, [\alpha_{1r_1}], [\alpha_{22}], \ldots, [\alpha_{2r_2}])$. 
\end{proof}

\begin{proposition} 
\label{prop: characterisation of order-detection}
Suppose that $M$ is a compact, connected, orientable, boundary-incompressible $3$-manifold whose boundary is a non-empty union of incompressible tori $T_1, T_2, \ldots, T_m$ and consider a multislope $[\alpha] =  ([\alpha_1], [\alpha_2], \ldots, [\alpha_m]) \in \mathcal{S}(M)$ where $[\alpha_i]$ is rational for $1 \leq i \leq r$. Set $W = M \cup_{T_1} N_1 \cup_{T_2} \cdots \cup_{T_r} N_r$ where each $N_i$ is a copy of the twisted $I$-bundle over the Klein bottle glued to $M$ along $T_i$ so that $[\lambda_{N_i}]$ is identified with $[\alpha_i]$.   

$(1)$ For $r < m$, $[\alpha] \in \mathcal{D}_{ord}(M)$ if and only if $([\alpha_{r+1}], \ldots, [\alpha_m]) \in \mathcal{S}(W)$. 

$(2)$ For $r = m$, $[\alpha] \in \mathcal{D}_{ord}(M)$ if and only if $\pi_1(W)$ is left-orderable. 
\end{proposition}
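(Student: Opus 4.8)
The plan is to prove both parts at once, viewing (2) as the special case $r=m$ of (1): filling every boundary torus leaves the \emph{empty} multislope, and a left-order ``order-detecting the empty multislope'' is simply a left-order on $\pi_1(W)$, so ``$\pi_1(W)$ is left-orderable'' is exactly the $r=m$ instance of ``$([\alpha_{r+1}],\dots,[\alpha_m])\in\mathcal{D}_{ord}(W)$'' (which is how I read the right-hand side of (1)). The one structural fact I need about the filling pieces is already contained in the proof of Theorem \ref{thm: topological def of order-detection}: since $\langle\lambda_{N_i}\rangle$ is convex with respect to \emph{every} left-order of $\pi_1(N_i)$, every left-order of $\pi_1(N_i)$ weakly detects $[\lambda_{N_i}]$; and as $g\cdot\mathfrak{o}$ is again such a left-order for each $g$, every left-order of $\pi_1(N_i)$ in fact order-detects $[\lambda_{N_i}]$, i.e. $LO(N_i)=LO(N_i;[\lambda_{N_i}])$ (cf. Remark \ref{rem: logst}).

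For the forward implication, suppose $\mathfrak{o}_M\in LO(M)$ order-detects $([\alpha_1],\dots,[\alpha_m])$. I glue on $N_1,\dots,N_r$ one at a time. At the $i$-th stage the current manifold order-detects the multislope carried by its remaining boundary tori (inductively), $N_i$ order-detects $[\lambda_{N_i}]$, and the gluing identifies $[\alpha_i]$ with $[\lambda_{N_i}]$; Theorem \ref{thm: general gluing} then supplies a left-order on the enlarged manifold order-detecting the truncated multislope. After $r$ steps I obtain a left-order on $\pi_1(W)$ order-detecting $([\alpha_{r+1}],\dots,[\alpha_m])$, which is the desired conclusion in both cases. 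At each stage one checks that the hypotheses of Theorem \ref{thm: general gluing} persist, since gluing irreducible manifolds along incompressible boundary tori keeps the result irreducible with incompressible tori, but this is routine.

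For the converse, fix $\mathfrak{o}_W\in LO(W)$ (order-detecting $([\alpha_{r+1}],\dots,[\alpha_m])$ when $r<m$, and arbitrary when $r=m$) and set $\mathfrak{o}_M=\mathfrak{o}_W|_{\pi_1(M)}$. I must show that for every $g\in\pi_1(M)$ and every $1\le i\le m$ one has $[L((g\cdot\mathfrak{o}_M)|_{\pi_1(T_i)})]=[\alpha_i]$. The first observation is that for $g\in\pi_1(M)$ one has $(g\cdot\mathfrak{o}_W)|_{\pi_1(M)}=g\cdot\mathfrak{o}_M$, so since $\pi_1(T_i)\le\pi_1(M)$ we get $(g\cdot\mathfrak{o}_M)|_{\pi_1(T_i)}=(g\cdot\mathfrak{o}_W)|_{\pi_1(T_i)}$. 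For a filled torus ($i\le r$) we have $\pi_1(T_i)=\pi_1(\partial N_i)\le\pi_1(N_i)$, and $(g\cdot\mathfrak{o}_W)|_{\pi_1(N_i)}$ is a left-order of $\pi_1(N_i)$, hence weakly detects $[\lambda_{N_i}]=[\alpha_i]$; thus $[L((g\cdot\mathfrak{o}_W)|_{\pi_1(T_i)})]=[\alpha_i]$. For an unfilled torus ($i>r$, occurring only when $r<m$), $T_i\subset\partial W$ and the assumed order-detection of $\mathfrak{o}_W$ gives $[L((g\cdot\mathfrak{o}_W)|_{\pi_1(T_i)})]=[\alpha_i]$ directly. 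In all cases $\mathfrak{o}_M$ order-detects $([\alpha_1],\dots,[\alpha_m])$, so $[\alpha]\in\mathcal{D}_{ord}(M)$.

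The point requiring the most care is the converse's bookkeeping: that restriction to $\pi_1(M)$ intertwines with the $\pi_1(M)$-conjugation action on $LO(W)$ (the identity $(g\cdot\mathfrak{o}_W)|_{\pi_1(M)}=g\cdot\mathfrak{o}_M$ for $g\in\pi_1(M)$), and that the filled tori $T_i$ literally \emph{are} the peripheral tori of the $N_i$, so that the $N_i$-side forces the slope $[\lambda_{N_i}]=[\alpha_i]$ on the relevant restrictions with no extra argument. Everything else reduces to the cited gluing theorem and the elementary fact that the restriction of a left-order to a subgroup is again a left-order; I do not anticipate a genuine obstacle beyond confirming that the manifolds produced in the forward induction satisfy the standing hypotheses.
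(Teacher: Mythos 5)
Your proposal is correct. The forward direction coincides with the paper's proof: an inductive application of Theorem \ref{thm: general gluing}, gluing one $N_i$ at a time and using the fact (established in the proof of Theorem \ref{thm: topological def of order-detection}) that \emph{every} left-order on $\pi_1(N_i)$ order-detects $[\lambda_{N_i}]$; you also correctly read part (1)'s ``$\in \mathcal{S}(W)$'' as the intended ``$\in \mathcal{D}_{ord}(W)$'', since membership in $\mathcal{S}(W)$ is vacuous.

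Your converse, however, takes a genuinely different route. The paper declares its reverse direction ``entirely analogous'' to that of Theorem \ref{thm: topological def of order-detection}, which invokes the Bludov--Glass theorem: left-orderability of the glued manifold produces compatible \emph{normal} families of left-orders, and conjugation-invariance of the detected slope is extracted from normality of the family on the $M$-side together with compatibility at the gluing torus. You instead restrict a single left-order $\mathfrak{o}_W$ to $\pi_1(M)$ and obtain conjugation-invariance by hand: the intertwining identity $(g\cdot\mathfrak{o}_W)|_{\pi_1(M)}=g\cdot\bigl(\mathfrak{o}_W|_{\pi_1(M)}\bigr)$ for $g\in\pi_1(M)$, plus the observation that $(g\cdot\mathfrak{o}_W)|_{\pi_1(N_i)}$ is still a left-order on $\pi_1(N_i)$ and therefore forces slope $[\lambda_{N_i}]=[\alpha_i]$ on $\pi_1(T_i)$, while the unfilled tori are handled directly by the assumed order-detection of $\mathfrak{o}_W$. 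This is, in effect, the easy (``only if'') half of Bludov--Glass unpacked --- the normal families that theorem supplies in that direction are precisely the restrictions of conjugates of a given order --- but your version is self-contained, avoids the citation, and makes the $r<m$ bookkeeping (detection on the unfilled tori) more transparent than the paper's one-line sketch. The only points you leave implicit --- that $\pi_1(M)$ and each $\pi_1(N_i)$ inject into $\pi_1(W)$, so that ``restriction'' makes sense, and that the boundary tori of the partial unions stay incompressible during the induction --- follow from incompressibility of the $T_i$ on both sides of each gluing and the resulting graph-of-groups structure; the paper is equally silent on these, so they do not constitute a gap.
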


\begin{proof}
The proof is analogous to that of Theorem \ref{thm: topological def of order-detection}. In the forward direction we replace the use of Theorem \ref{thm: gluing via detection} there with an inductive application of Theorem \ref{thm: general gluing} here. The proof in the reverse direction is entirely analogous. 
\end{proof}

We could replace any of the copies of $N$ in this result by an arbitrary $LO$-generalised solid torus (cf. Remark \ref{rem: logst}). 

We finish this section with a result which gives sufficient conditions for the left-orderability of the fundamental group of a closed $3$-manifold split into pieces along incompressible tori. A result of this nature first appeared in \cite{BC} in the case of graph manifolds, while the one we state below is Theorem 7.6 of \cite{BGH1}. For completeness, we provide its proof, which is a simple consequence of Theorem \ref{thm: gluing via detection} and Proposition \ref{prop: characterisation of order-detection}.

Let $W$ be closed, connected, irreducible rational homology $3$-sphere and $T_1, T_2, \ldots, T_m$ a disjoint family of essential tori in $W$ which split it into pieces $M_1, M_2, \ldots, M_n$. For $1 \leq j \leq n$ write $\partial M_j = T_{i_1} \sqcup T_{i_2} \sqcup \cdots \sqcup T_{i_{r(j)}}$. For each family of slopes $([\alpha_1], [\alpha_2], \ldots, [\alpha_m]) \in \mathcal{S}(T_1) \times \mathcal{S}(T_2) \times \cdots \times \mathcal{S}(T_m)$, let 
$$[L(j)] = ([\alpha_{i_1}], [\alpha_{i_2}], \ldots, [\alpha_{i_{r(j)}}]) \in \mathcal{S}(M_j)$$ 

Here is the generalised gluing theorem of \cite{BGH1}. 

\begin{definition}
We call $([\alpha_1], [\alpha_2], \ldots, [\alpha_m]) \in \mathcal{S}(T_1) \times \mathcal{S}(T_2) \times \cdots \times \mathcal{S}(T_m)$ {\it gluing coherent} if $[L(j)] \in \mathcal{D}_{ord}(M_j)$ for each $j$. 
\end{definition}

\begin{theorem} {\rm (Boyer-Gordon-Hu)}
\label{thm: general * gluing}
Let $W = \cup_j M_j$ be closed, connected, irreducible $3$-manifold expressed as a union of submanifolds $M_1, M_2, \ldots, M_n$ along a disjoint family of essential tori $T_1, T_2, \ldots, T_m$. If there is a gluing coherent family of slopes $([\alpha_1], [\alpha_2], \ldots, [\alpha_m]) \in \mathcal{S}(T_1) \times \mathcal{S}(T_2) \times \cdots \times \mathcal{S}(T_m)$, then $W$ has a left-orderable fundamental group. 
\end{theorem}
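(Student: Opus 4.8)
The plan is to prove a statement slightly stronger than the theorem by induction on the number $m$ of splitting tori, so that manifolds with boundary can appear at intermediate stages. The inductive claim is: if $X$ is a compact, connected, orientable, irreducible, boundary-incompressible $3$-manifold whose boundary is a (possibly empty) union of incompressible tori, and $X$ is split by a disjoint family of essential tori $T_1, \dots, T_m$ into pieces $P_1, \dots, P_s$ carrying a family of slopes whose restriction $[L(l)]$ to each $\partial P_l$ lies in $\mathcal{D}_{ord}(P_l)$, then $\pi_1(X)$ admits a left-order that order-detects the induced multislope on $\partial X$. The theorem is the special case $\partial X = \emptyset$, where order-detecting the empty boundary multislope simply asserts that $\pi_1(X) = \pi_1(W)$ is left-orderable. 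The base case $m = 0$ is the hypothesis itself, since then $X = P_1$ and $[L(1)] \in \mathcal{D}_{ord}(P_1)$.

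For the inductive step I would single out one torus $T_m$ and distinguish two cases according to whether $T_m$ separates $X$. If $T_m$ separates, write $X = X' \cup_{f} X''$, where $f\colon T' \to T''$ is the induced gluing of the two boundary copies of $T_m$. Each of $X', X''$ is again compact, connected, orientable, irreducible and boundary-incompressible (cutting an irreducible manifold along an essential, hence incompressible, non-boundary-parallel torus preserves these properties), is split by a proper subfamily of $T_1, \dots, T_{m-1}$ into a subcollection of the original pieces, and inherits a slope family satisfying the order-detection hypothesis on each piece. By the inductive hypothesis, $\pi_1(X')$ and $\pi_1(X'')$ carry left-orders order-detecting their boundary multislopes; in particular the slopes induced on $T'$ and on $T''$ are the common slope assigned to $T_m$, so $f$ identifies two order-detected slopes. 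Theorem \ref{thm: general gluing} then produces a left-order on $\pi_1(X)$ order-detecting the remaining multislope on $\partial X$, completing this case.

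If $T_m$ does not separate $X$, let $X_0$ be the connected manifold obtained by cutting $X$ along $T_m$; it has two additional boundary tori $T^{+}, T^{-}$ and is split by $T_1, \dots, T_{m-1}$ into the same pieces. By induction $\pi_1(X_0)$ carries a left-order order-detecting its boundary multislope, and $\pi_1(X)$ is the HNN extension of $\pi_1(X_0)$ whose stable letter conjugates $\pi_1(T^{+})$ to $\pi_1(T^{-})$ by the isomorphism $\phi$ induced by $f$. To left-order this HNN extension I would invoke the HNN analogue of the Bludov--Glass criterion (also contained in \cite{BG}): it suffices to produce a normal family $\mathcal{N} \subset LO(X_0)$ whose restrictions to $\pi_1(T^{+})$ and to $\pi_1(T^{-})$ are interchanged by $\phi$. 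Exactly as in Lemma \ref{lemma: det +}, the order-detection of the slopes on $T^{\pm}$ furnishes such a family by closing the orbit of the detecting order under opposites and a convex swap along the relevant convex subgroup, so that its restriction to each of $\pi_1(T^{\pm})$ surjects onto $LO(\partial X_0; [\alpha])$; since $f$ identifies the detected slopes on $T^{+}$ and $T^{-}$, these two restriction sets correspond under $\phi$. Tracking the construction as in the proof of Theorem \ref{thm: general gluing} then shows the resulting left-order on $\pi_1(X)$ order-detects the remaining boundary multislope, which closes the induction.

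I expect the non-separating case to be the main obstacle. Theorem \ref{thm: general gluing} handles only amalgamated products, so a self-gluing forces one to replace the amalgam form of Bludov--Glass used throughout by its HNN form, and, more delicately, to verify the compatibility condition for a \emph{single} normal family restricted to two distinct peripheral subgroups of the same manifold $X_0$, rather than to one peripheral subgroup in each of two factors. The decisive point is that order-detection, via the convex-swap construction of Lemma \ref{lemma: det +}, produces restriction sets on $\pi_1(T^{\pm})$ that exhaust $LO(\partial X_0; [\alpha])$ and are therefore matched by the slope-identifying isomorphism $\phi$; granting this, the rest is the same bookkeeping with normal families and the continuous slope map that already appears in Theorems \ref{thm: gluing via detection} and \ref{thm: general gluing}.
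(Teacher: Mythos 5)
Your separating-torus case is correct, and it takes a genuinely different route from the paper: the paper never runs an induction over manifolds with boundary. Instead, its first move is to note that, by \cite[Theorem 1.1]{BRW} (cf.\ the remark following Theorem \ref{thm: brw}), $\pi_1(W)$ is automatically left-orderable when $b_1(W)>0$, so one may assume $W$ is a rational homology sphere; then \emph{every} essential torus in $W$ separates, and the same is true of every torus inside any sub-union of the pieces (a curve meeting such a torus algebraically once would certify $b_1(W)>0$). The paper then inducts on $m$ over closed manifolds: it splits $W$ along $T_1$ into $M'$ and $M''$, caps each side with a twisted $I$-bundle over the Klein bottle, uses the inductive hypothesis on the capped-off closed manifold together with Proposition \ref{prop: characterisation of order-detection} to conclude that $[\alpha_1]$ is order-detected on $\partial M'$ and $\partial M''$, and finishes with Theorem \ref{thm: gluing via detection}. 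Your strengthened inductive statement fed into Theorem \ref{thm: general gluing} is a legitimate alternative for separating tori.

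The genuine gap is your non-separating case, which the observation above shows you never needed to treat, and which as written does not go through. First, the criterion you invoke is not in \cite{BG}: the Bludov--Glass theorem used throughout the paper is for free products with amalgamation only; the HNN (graph-of-groups) analogue is a separate result, due to Chiswell (\emph{Right orderability and graphs of groups}, J.\ Group Theory 14 (2011)), so it would have to be imported or proved. Second, and more substantively, your normal family does not satisfy the compatibility condition you claim. The convex swap of Lemma \ref{lemma: det +} is performed along a convex subgroup $C$ with $C \cap \pi_1(T^+) = \langle \alpha^+ \rangle$, but nothing controls $C \cap \pi_1(T^-)$; if $C \cap \pi_1(T^-) = \{1\}$, the swap leaves the restriction to $\pi_1(T^-)$ unchanged. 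Then the restriction set of your family on $\pi_1(T^+)$ is all four orders realising $[\alpha^+]$, while the restriction set on $\pi_1(T^-)$ may be a proper subset of the four orders realising $[\alpha^-]$ (the $*$-orders can be missing), and two sets of different sizes cannot correspond under $\phi$. The repair is to apply Proposition \ref{prop: wk ord-det means convex} and the swap \emph{separately} for $T^+$ and for $T^-$ (each output still order-detects the full boundary multislope, by the continuity and invariance arguments in the proof of Theorem \ref{thm: general gluing}), and take the union of all resulting orbits and their opposites; then both restriction sets equal the full four-element sets and are matched by $\phi$. But as stated, the decisive compatibility hypothesis is unverified, so the inductive step fails precisely at the point you yourself flagged as the main obstacle.
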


\begin{proof}
Without loss of generality we can suppose that $W$ is a rational homology $3$-sphere (\cite[Theorem 1.1]{BRW}), so that each $T_i$ is separating. We induct on $m$, the case $m = 1$ being Theorem \ref{thm: gluing via detection}. Suppose then that $m \geq 2$.

After re-indexing we can suppose that $M_1, \ldots, M_r$ lie to one side of $T_1$, $M_{r+1}, \ldots, M_n$ to the other, and $M_r \cap M_{r+1} = T_1$. Set $M' = M_1 \cup \cdots \cup M_r$ and $M'' = M_{r+1} \cup \cdots \cup M_n$, so that $W = M' \cup_{T_1} M''$. We can assume that $T_2, \ldots, T_s$ are contained in the interior of $M'$ and $T_{s+1}, \ldots, T_m$ in the interior of $M''$. 

For $j = r, r+1$ define $M_j' = M_j \cup N$, where $N$ is a twisted $I$-bundle over the Klein bottle which is attached to $M_j$ by a homeomorphism which identifies the longitudinal slope of $N$ with $[\alpha_1]$. An application of Proposition \ref{prop: characterisation of order-detection} then shows that $([\alpha_{2}], \ldots, [\alpha_s])$ is gluing coherent in the closed manifold $W_1 = M_1 \cup \cdots \cup M_{r-1} \cup M_r'$, so our inductive hypothesis implies that $\pi_1(W_1)$ is left-orderable. Another application of Proposition \ref{prop: characterisation of order-detection} implies that $[\alpha_1]$ is order-detected on $\partial (M_1 \cup \cdots \cup M_r)$. Similarly, $[\alpha_1]$ is order-detected on $\partial (M_{r+1} \cup \cdots \cup M_n)$. The case $m = 1$ now implies that $W$ has a left-orderable fundamental group. 
\end{proof}

\section{Strong order-detection of slopes}
\label{sec: str}
The notion of order-detection introduced in the previous section was designed to understand the left-orderability of the fundamental group of a union of two knot manifolds $M_1, M_2$ glued along their (incompressible) boundary tori. Here we introduce a form of order-detection adapted to the situation where $M_2$ is a solid torus. In other words, to Dehn fillings of $M_1$. 

\begin{definition}  
\label{def: str det}
Let $[\alpha]$ be a slope on $\partial M$.  We say that $[\alpha]$ is {\it strongly order-detected} if it is an irrational slope and order-detected or it is a rational slope and there is a left-order $\mathfrak{o} \in LO(M)$ and a proper $\mathfrak{o}$-convex, normal subgroup $C$ of $\pi_1(M)$ such that $\langle \alpha \rangle \leq C \cap \pi_1(\partial M)$.   
\end{definition}
Set 
$$\mathcal{D}_{ord}^{str}(M) = \{[\alpha] \in \mathcal{S}(M) \; | \; [\alpha] \hbox{ is strongly order-detected}\}$$

Though the definition of strong order-detection is intrinsic to $LO(M_1)$,  in the case of rational slopes it is essentially equivalent to the left-orderability of the associated Dehn filling of $M_1$. It is interesting to compare this situation with Theorem \ref{thm: topological def of order-detection}. 

{\bf Theorem \ref{thm: str det and df quot}.} 
{\it Suppose that $[\alpha]$ is a rational slope on $\partial M$. Then $[\alpha] \in \mathcal{D}_{ord}^{str}(M)$ if and only if $\pi_1(M(\alpha))$ has a left-orderable quotient. }

\begin{proof}
If $[\alpha] \in \mathcal{D}_{ord}^{str}(M)$ there is a proper $\mathfrak{o}$-convex, normal subgroup $C$ of $\pi_1(M)$ such that $\langle \alpha \rangle \leq C \cap \pi_1(\partial M)$ and therefore we have an epimorphism from $\pi_1(M(\alpha) $ the left-orderable group $\pi_1(M)/C$. 

Conversely suppose that there is an epimorphism $\pi_1(M(\alpha)) \to G$, where $G$ is left-orderable and let $C$ be the kernel of the composition $\pi_1(M) \to \pi_1(M(\alpha)) \to G$, so $\langle \alpha \rangle \leq C \cap \pi_1(\partial M)$. As a subgroup of a left-orderable group, $C$ is left-orderable, so we can use the exact sequence
$$1 \to C \to \pi_1(M) \to G$$
to find a left-order $\mathfrak{o}$ on $\pi_1(M)$ for which $C$ is $\mathfrak{o}$-convex, which completes the proof. 
\end{proof}

{\bf Corollary \ref {cor: str detect and dehn filling}.} 
{\it Suppose that $[\alpha]$ is a rational slope on $\partial M$. If $\pi_1(M(\alpha))$ is left-orderable, then $[\alpha]$ is strongly order-detected.  Conversely, 
if $[\alpha]$ is strongly order-detected and $M(\alpha)$ is irreducible, then $\pi_1(M(\alpha))$ is left-orderable.}
\qed

\begin{proposition} 
\label{prop: most str detected}
Suppose that $M$ is a knot manifold, $\mathfrak{o} \in LO(M)$, and $C$ is a proper $\mathfrak{o}$-convex, normal subgroup $C$ of $\pi_1(M)$ which contains $\pi_1(\partial M)$. Then,

$(1)$ $\mathcal{S}(M) \setminus \mathcal{D}_{ord}^{str}(M) \subseteq \{[\alpha] \in \mathcal{S}_{rat}(M) \; | \; M(\alpha) \mbox{ is reducible}\}$. Hence $\#(\mathcal{S}(M) \setminus \mathcal{D}_{ord}^{str}(M)) \leq 3$.

$(2)$ $\mathcal{D}_{ord}(M) = \mathcal{S}(M)$.
\end{proposition}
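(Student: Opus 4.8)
The plan is to prove the two parts in the order (1)-for-rational-slopes, (2), (1)-for-irrational-slopes, since the bulk of the work is in part (2) and the remaining assertions then follow almost formally. First I would unpack the hypothesis. Because $C$ is a proper, normal, $\mathfrak{o}$-convex subgroup, the induced order of Proposition \ref{prop: induced order} descends (using the normality of $C$) to a left-order on the nontrivial quotient group $G := \pi_1(M)/C$; thus we have an epimorphism $\varphi \colon \pi_1(M) \to G$ onto a nontrivial left-orderable group with $\pi_1(\partial M) \le \ker \varphi = C$. This already yields the rational part of (1): if $[\alpha]$ is any rational slope then $\langle \alpha \rangle \le \pi_1(\partial M) \le C$, so the pair $(\mathfrak{o}, C)$ is precisely a witness that $[\alpha]$ is strongly order-detected in the sense of Definition \ref{def: str det}. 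Hence $\mathcal{S}_{rat}(M) \subseteq \mathcal{D}_{ord}^{str}(M)$, and in particular $\mathcal{S}(M) \setminus \mathcal{D}_{ord}^{str}(M)$ contains no rational slopes.

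For part (2) I would show that a dense subset of rational slopes lies in $\mathcal{D}_{ord}(M)$ and then appeal to the closedness of $\mathcal{D}_{ord}(M)$ (Proposition \ref{prop:Dord is closed}). Fix a rational slope with $\alpha \in H_1(\partial M)$ primitive and put $A = \langle\langle \alpha \rangle\rangle_{\pi_1(M)}$; since $C$ is normal and $\alpha \in C$ we have $A \le C$, so $Q := \pi_1(M(\alpha)) = \pi_1(M)/A$ surjects onto the nontrivial left-orderable group $G$. When $M(\alpha)$ is irreducible, Theorem \ref{thm: brw} promotes this to left-orderability of $Q$ itself; letting $\pi_1(M)$ act on $(Q, <_Q)$ through the projection and left multiplication, Proposition \ref{prop: action yields lo} provides a left-order $\mathfrak{o}'$ on $\pi_1(M)$ relative to which the normal subgroup $A$ (the stabiliser of the identity coset) is convex. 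As $A$ is normal, $A \cap g\pi_1(\partial M)g^{-1} = g(A \cap \pi_1(\partial M))g^{-1}$ for every $g$, and convexity of $A$ passes to convexity of this intersection inside $g\pi_1(\partial M)g^{-1}$; so if $A \cap \pi_1(\partial M) = \langle \alpha \rangle$, then the line of $\mathfrak{o}'$ restricted to each peripheral conjugate has slope $[\alpha]$ (cf.\ \S \ref{subsec: lo z2}), giving $\mathfrak{o}' \in LO(M; [\alpha])$ and hence $[\alpha] \in \mathcal{D}_{ord}(M)$.

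The hard part will be verifying the identity $A \cap \pi_1(\partial M) = \langle \alpha \rangle$, equivalently that the image of $\pi_1(\partial M)$ in $Q$ is infinite cyclic rather than finite or of rank two. I would establish this exactly as in Claims 1--3 of Proposition \ref{prop: not bdry-cofinal implies all slopes}: a half-lives-half-dies homology computation shows that a curve $\mu$ dual to $\alpha$ fails to have infinite order in $H_1(M(\alpha))$ only for $[\alpha]$ in a discrete set $Z \subseteq \mathcal{S}(M)$, while Gordon--Luecke \cite{GLu} bound the number of reducible fillings by three. Writing $Z^*$ for the union of these two sets, $Z^*$ is nowhere dense, and for $[\alpha] \in \mathcal{S}_{rat}(M) \setminus Z^*$ both required conditions ($M(\alpha)$ irreducible and $\mu$ of infinite order, whence $A \cap \pi_1(\partial M) = \langle\alpha\rangle$) hold, so every such slope is order-detected. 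Since $\mathcal{S}_{rat}(M) \setminus Z^*$ is dense in $\mathcal{S}(M)$ and $\mathcal{D}_{ord}(M)$ is closed, this forces $\mathcal{D}_{ord}(M) = \mathcal{S}(M)$, proving (2).

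It remains to finish (1). By (2) every slope, in particular every irrational slope, is order-detected, and for irrational slopes order-detection is by definition the same as strong order-detection (Definition \ref{def: str det}). Combined with the first paragraph, every slope of either type lies in $\mathcal{D}_{ord}^{str}(M)$, so $\mathcal{S}(M) \setminus \mathcal{D}_{ord}^{str}(M) = \emptyset$; the asserted containment in $\{[\alpha] \in \mathcal{S}_{rat}(M) : M(\alpha) \text{ is reducible}\}$ and the bound $\#(\mathcal{S}(M) \setminus \mathcal{D}_{ord}^{str}(M)) \le 3$ then hold trivially. (Should the inclusion $\mathcal{D}_{ord}^{str}(M) \subseteq \mathcal{D}_{ord}(M)$ already be available at this point in the paper, part (2) shortcuts: $\mathcal{S}_{rat}(M) \subseteq \mathcal{D}_{ord}^{str}(M) \subseteq \mathcal{D}_{ord}(M)$ is dense, and closedness finishes the argument without the explicit $A$-construction.)
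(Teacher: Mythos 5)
Your overall architecture is the same as the paper's: rational slopes are strongly detected directly from Definition \ref{def: str det} (the paper instead cites Theorem \ref{thm: str det and df quot}, but your direct argument is fine), part (2) is to be proved by order-detecting a dense set of rational slopes via the left-order $\mathfrak{o}'$ built from $1 \to \langle\langle \alpha \rangle\rangle \to \pi_1(M) \to \pi_1(M(\alpha)) \to 1$, using normality of $A = \langle\langle\alpha\rangle\rangle$ for conjugation-invariance and Proposition \ref{prop:Dord is closed} to close up, and the irrational case of (1) then follows from (2). However, your verification of the key identity $A \cap \pi_1(\partial M) = \langle \alpha \rangle$ rests on a false statement. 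You claim that the dual curve $\mu$ has infinite order in $H_1(M(\alpha))$ for all $[\alpha]$ outside a discrete set $Z$. This fails for \emph{every} knot manifold: by half-lives-half-dies the image of $H_1(\partial M;\mathbb{Q})$ in $H_1(M;\mathbb{Q})$ is a single line, spanned by $i_*\mu$ (since $i_*\lambda_M$ is torsion). Writing $\alpha = p\mu + q\lambda_M$ with $p \neq 0$ (i.e.\ $[\alpha] \neq [\lambda_M]$), the class $p\, i_*\mu - i_*\alpha = -q\, i_*\lambda_M$ is torsion in $H_1(M)$, so $p[\mu]$ is torsion in $H_1(M(\alpha)) = H_1(M)/\langle i_*\alpha\rangle$; hence $[\mu]$ has \emph{finite} order for every rational slope other than the rational longitude. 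Your set $Z$ is therefore dense rather than discrete, $\mathcal{S}_{rat}(M) \setminus Z^*$ is at most the single point $[\lambda_M]$, and the density-plus-closedness argument collapses. The analogy with Claims 1--3 of Proposition \ref{prop: not bdry-cofinal implies all slopes} does not transfer, because there the homology computation takes place in the \emph{noncompact} cover $W$ with $\pi_1(W) = C$, where at most one dimension of $H_1(T;\mathbb{Q})$ dies; in the compact manifold $M$ exactly one dimension dies, which is the opposite situation.

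The gap is substantive because homology alone can only show, away from a discrete set of slopes, that the image of $\mu$ in $H_1(M(\alpha))$ is \emph{nonzero}; that rules out $\pi_1(\partial M) \leq A$, but not the intermediate possibility $A \cap \pi_1(\partial M) = \langle \alpha, \mu^k \rangle$ with $|k| \geq 2$, which is exactly what must be excluded. Two repairs are available, and you already hold the ingredients for both. The paper's: since $A$ is $\mathfrak{o}'$-convex, $A \cap \pi_1(\partial M)$ is an $\mathfrak{o}'|_{\pi_1(\partial M)}$-convex, hence root-closed, subgroup of $\pi_1(\partial M) \cong \mathbb{Z}^2$ containing the primitive element $\alpha$; so it equals $\langle\alpha\rangle$ or all of $\pi_1(\partial M)$, and only the latter need be excluded homologically --- this is the role of the paper's set $K(M) \subset \{\beta : |\beta \cdot \lambda_M| = 1\}$, which is genuinely discrete. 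Alternatively, follow Claims 1--3 faithfully: the image of $\pi_1(\partial M)$ in $\pi_1(M(\alpha))$ is cyclic and (generically) nontrivial, and $\pi_1(M(\alpha))$ is torsion-free because $M(\alpha)$ is irreducible and its group is infinite (it surjects onto $G$), so that image is infinite cyclic, again giving $A \cap \pi_1(\partial M) = \langle \alpha \rangle$. Finally, note that your parenthetical shortcut via $\mathcal{D}_{ord}^{str}(M) \subseteq \mathcal{D}_{ord}(M)$ is not available: that inclusion is Corollary \ref{cor: str implies reg}, which appears after this proposition and whose proof invokes it, so the shortcut would be circular.
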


\begin{proof}
Part (1) is an immediate consequence of Theorem \ref{thm: str det and df quot}.

As a prelude to proving part (2), let 
$$K(M) = \{\mbox{primitive } \beta^{\pm 1} \in \pi_1(\partial M) \; | \; \pi_1(\partial M)  \leq \langle \langle \beta \rangle \rangle\},$$
where $\langle \langle \beta \rangle \rangle$ denotes the normal closure in $\pi_1(M)$ of $\beta$, and 
$$K^*(M) = \{\mbox{primitive } \beta \in \pi_1(\partial M) \; | \; \beta \in K(M) \mbox{ or } M(\beta) \mbox{ is reducible}\}$$
We expect $K^*(M)$ to be small and know of no knot manifold $M$ where $\#K^*\hspace{-.4mm}(M) > 3$. Simple considerations show that 
$K(M) \subset \{\mbox{primitive } \beta \in \pi_1(\partial M) \; | \;   |\beta \cdot \lambda_M| = 1\}$, a discrete subset of $\mathcal{S}(M)$.

By hypothesis, any rational slope $[\alpha] \in \mathcal{S}(M) \setminus K^*\hspace{-.4mm}(M)$ is strongly order-detected rational slope, and as $M(\alpha)$ is irreducible, Corollary \ref {cor: str detect and dehn filling} implies that $\pi_1(M(\alpha))$ is left-orderable. 

Let $\mathfrak{o}'$ to be any left-order on $\pi_1(M)$ obtained from the short exact sequence
$$1 \to \langle \langle \alpha \rangle \rangle \to \pi_1(M) \to \pi_1(M(\alpha)) \to 1$$ 
Then $\langle \langle \alpha \rangle \rangle$ is $\mathfrak{o}'$-convex, so $\pi_1(\partial M)  \cap \langle \langle \alpha \rangle \rangle$ is a primitive subgroup of $\pi_1(\partial M)$. But then as $\alpha \not \in K(M)$, $\pi_1(\partial M)  \cap \langle \langle \alpha \rangle \rangle = \langle \alpha \rangle$. Thus $\mathfrak{o}'$ weakly order-detects $[\alpha]$. Moreover if $g \in \pi_1(M)$,  
$$\langle \langle \alpha \rangle \rangle \cap g \pi_1(\partial M) g^{-1} = g\big(\langle \langle \alpha \rangle \rangle \cap  \pi_1(\partial M)\big)g^{-1}  = g \langle \alpha  \rangle g^{-1}= \langle g \alpha g^{-1} \rangle$$  
Thus $\mathfrak{o}'$ order-detects $[\alpha]$ and consequently, $\mathcal{S}_{rat}(M) \setminus K^*\hspace{-.4mm}(M) \subset \mathcal{D}_{ord}(M)$. Then as $\mathcal{S}_{rat}(M) \setminus K^*\hspace{-.4mm}(M)$ is dense in $\mathcal{S}(M)$ and $\mathcal{D}_{ord}(M)$ is closed, $\mathcal{D}_{ord}(M) = \mathcal{S}(M)$, which is (2).   
\end{proof}
 
\begin{corollary} 
\label{cor: str implies reg}
$\mathcal{D}_{ord}^{str}(M) \subseteq \mathcal{D}_{ord}(M)$.
\end{corollary}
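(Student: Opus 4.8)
The plan is to separate the irrational and rational cases. If $[\alpha]$ is irrational, then by Definition \ref{def: str det} strong order-detection \emph{means} order-detection, so $[\alpha]\in\mathcal{D}_{ord}(M)$ with nothing to prove. I would therefore immediately reduce to a rational slope, represented by a primitive $\alpha\in\pi_1(\partial M)$, together with a left-order $\mathfrak{o}$ and a proper $\mathfrak{o}$-convex \emph{normal} subgroup $C\trianglelefteq\pi_1(M)$ satisfying $\langle\alpha\rangle\le C\cap\pi_1(\partial M)$. The first observation is that $C\cap\pi_1(\partial M)$ is $\mathfrak{o}|_{\pi_1(\partial M)}$-convex and contains the nontrivial group $\langle\alpha\rangle$, so the classification of convex subgroups of $\mathbb{Z}^2$ recalled in \S\ref{subsec: lo z2} leaves only two possibilities: $C\cap\pi_1(\partial M)=\langle\alpha\rangle$ or $C\cap\pi_1(\partial M)=\pi_1(\partial M)$. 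The proof then splits along this dichotomy.

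In the first case I would use normality of $C$ to pin down the detected slope at \emph{every} peripheral conjugate simultaneously. For each $g\in\pi_1(M)$ normality gives $C=gCg^{-1}$, whence
\[ C\cap g\pi_1(\partial M)g^{-1}=g\big(C\cap\pi_1(\partial M)\big)g^{-1}=\langle g\alpha g^{-1}\rangle, \]
a rank-one $\mathfrak{o}|_{g\pi_1(\partial M)g^{-1}}$-convex subgroup of $g\pi_1(\partial M)g^{-1}$. Hence $[L(\mathfrak{o};g\pi_1(\partial M)g^{-1})]$ is the slope determined by $\langle g\alpha g^{-1}\rangle$, which is precisely $[\alpha]$ once transported back to $\partial M$ through the identification isomorphism of \S\ref{sec: reg}. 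Feeding this into Lemma \ref{lemma: variance under action} (applied with $g^{-1}$ in place of $g$) yields $[L(g\cdot\mathfrak{o};\pi_1(\partial M))]=[L(\mathfrak{o};g^{-1}\pi_1(\partial M)g)]=[\alpha]$ for all $g$, i.e. $\mathfrak{o}\in LO(M;[\alpha])$. In particular $LO(M;[\alpha])\ne\emptyset$, so $[\alpha]$ is order-detected. I note that this is a cleaner version of the case analysis in the proof of Lemma \ref{lemma: det +}, where $C$ was not normal and three sub-cases were needed.

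In the second case $\pi_1(\partial M)\le C$, so $\mathfrak{o}$ and $C$ satisfy exactly the hypotheses of Proposition \ref{prop: most str detected}, and its part $(2)$ gives $\mathcal{D}_{ord}(M)=\mathcal{S}(M)$, whence $[\alpha]\in\mathcal{D}_{ord}(M)$. I expect this second case to be the only real obstacle: here the peripheral intersection has rank two, so the normality computation of the previous paragraph no longer isolates a single slope and one cannot argue directly. Instead one must lean on the Dehn-filling analysis behind Proposition \ref{prop: most str detected}, which ultimately rests on the Gordon--Luecke bound (\cite{GLu}) on the number of reducible fillings. The rank-one case, by contrast, is a self-contained consequence of normality together with Lemma \ref{lemma: variance under action}.
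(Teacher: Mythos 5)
Your proof is correct and takes essentially the same route as the paper's: the irrational case is dispatched by definition, and the rational case splits according to whether $C \cap \pi_1(\partial M)$ equals $\langle \alpha \rangle$ or all of $\pi_1(\partial M)$, with normality giving $C \cap g\pi_1(\partial M)g^{-1} = \langle g\alpha g^{-1}\rangle$ in the first subcase and Proposition \ref{prop: most str detected}(2) handling the second. The only difference is expository: you make explicit the convex-subgroup dichotomy for $\mathbb{Z}^2$ and the appeal to Lemma \ref{lemma: variance under action}, both of which the paper's proof leaves implicit.
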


\begin{proof} 
Let $[\alpha] \in \mathcal{D}_{ord}^{str}(M)$. If it is irrational, it is order-detected by definition. If it is rational, there is a left-order $\mathfrak{o} \in LO(M)$ and a proper $\mathfrak{o}$-convex, normal subgroup $C$ of $\pi_1(M)$ such that $\langle \alpha \rangle \leq C \cap \pi_1(\partial M)$. If $\pi_1(\partial M) \leq C$ then the previous proposition shows that $\mathcal{D}_{ord}(M) = \mathcal{S}(M)$, which completes the proof. Otherwise $C \cap \pi_1(\partial M) = \langle \alpha \rangle$ and therefore the normality of $C$ implies that $\langle \langle \alpha \rangle \rangle \leq C$. Then
$$C \cap g \pi_1(\partial M) g^{-1} = g\big(C \cap  \pi_1(\partial M)\big)g^{-1}  = g \langle \alpha  \rangle g^{-1}= \langle g \alpha g^{-1} \rangle,$$  
so $[\alpha]$ is order-detected. 
\end{proof} 

It is possible that $\mathcal{D}_{ord}^{str}(M) = \mathcal{D}_{ord}(M)$. For instance this occurs in the extreme cases that $\mathcal{D}_{ord}^{str}(M) = \mathcal{S}(M)$ (e.g. $b_1(M) \geq 2$) or $\mathcal{D}_{ord}(M) = \{\lambda_M\}$. The latter occurs for any Seifert fibre space with base orbifold a Mobius band with cone points (e.g. the twisted $I$-bundle over the Klein bottle), as is shown in \cite{BC}. However, the next example illustrates that in general, the containment $\mathcal{D}_{ord}^{str}(M) \subseteq \mathcal{D}_{ord}(M)$ may be proper.

\begin{example}
Suppose that $M$ is the complement of a $(r, s)$ torus knot $T_{r,s} \subset S^3$ ($r,s \geq 2$) and denote its standard meridian and longitude basis for $\pi_1(\partial M)$ by $\{ \mu, \lambda\}$. It follows from the work of Jankins and Neumann \cite{JN} and of Naimi \cite{Na} that under the usual identification of $\mathcal{S}(M)$ with $\mathbb R \cup \{\infty\}$, $[x\mu + y\lambda] \leftrightarrow x/y$, 
$$\mathcal{D}_{ord}^{str}(M) = (-\infty, rs - r - s) \subset [-\infty, rs - r - s] = \mathcal{D}_{ord}(M)$$ 
(See \cite[Appendix A]{BC}.) In particular, $\{ [\mu], [\mu^{rs-r-s} \lambda] \} \subset  \mathcal{D}_{ord}(M) \setminus \mathcal{D}_{ord}^{str}(M)$. 
  \end{example}

\end{document}